\theoremstyle{plain}
\newtheorem{thm}{Theorem}[section]
\newtheorem{lemma}[thm]{Lemma}
\newtheorem{prop}[thm]{Proposition}
\newtheorem{obs}[thm]{Observation}
\newtheorem{claim}[thm]{Claim}
\newtheorem{cor}[thm]{Corollary}
\newtheorem{conj}[thm]{Conjecture}
\theoremstyle{definition}
\newtheorem{defn}[thm]{Definition}
\theoremstyle{remark}
\newtheorem*{rem}{Remark}
\title{On different ``middle parts'' of a tree}
\author{Heather Smith$^{*}$}
\author{L\'aszl\'o Sz\'ekely$^{\dagger}$}
\author{Hua Wang$^{\ddagger}$}
\author{Shuai Yuan$^\dagger$}
\address{$^*$ School of Mathematics, Georgia Institute of Technology ,Atlanta, Georgia, USA}
\address{$^\dagger$ Department of Mathematics, University of South Carolina, Columbia, South Carolina, USA}
\address{$^\ddagger$ Department of Mathematical Sciences, Georgia Southern University, Statesboro, Georgia, USA}
\thanks{Emails: {\tt heather.smith@math.gatech.edu}, {\tt \{szekely,syuan\}@math.sc.edu}, {\tt hwang@georgiasouthern.edu}}
\begin{document}

\begin{abstract}
We determine the maximum distance between any two of the center, centroid, and subtree core among trees with a given order. Corresponding results are obtained for trees with given maximum degree and also for trees with given diameter. The problem of the maximum distance between the
centroid and the subtree core among trees with given order and diameter becomes difficult. It can be solved in terms of the problem of minimizing the number of root-containing subtrees in a rooted tree of given order and height.  While the latter problem remains unsolved,  we provide a partial characterization of the extremal structure.

  \bigskip\noindent \textbf{Keywords:} {eccentricity, vertex distance, subtree core, extremal problems, comet, greedy tree}
\end{abstract}

\maketitle


\section{Preliminaries}
Many real-valued functions defined on the vertex set of a tree have been studied in the literature. Such functions are closely related to graph invariants motivated from practical applications (such as the Wiener index in biochemistry \cite{wiener}) or pure mathematical study (such as the subtree density \cite{jamison}). In this paper we are interested in the {\it eccentricity} and {\it distance of a vertex},
as well as the {\it number of subtrees containing a vertex}. Both eccentricity and distance of a vertex are minimized at one vertex or two adjacent vertices. These vertices define the {\it center} and {\it centroid} respectively
of the tree. The number of subtrees containing a vertex is maximized by one vertex or two adjacent vertices, called the {\it subtree core} of the tree.

The study of the center and centroid (Definitions \ref{def: C} and \ref{def: CT}) can be traced back to \cite{jordan}. This paper focuses on the three different middle parts of the tree: the center, the centroid, and the subtree core. We investigate the geometry of their possible locations, in particular the extremal problem of  how far they can be from each other.
The resulting three problems are solved in Section~\ref{dist middle parts} for arbitrary trees of fixed order, and  the corresponding extremal structures are also characterized. In Section~\ref{deg tree}, 
the same three problems are solved  for degree bounded trees and conjectures are proposed  for the case of  binary trees of fixed order. In Section \ref{diam tree}, we consider the same questions for trees with fixed order and bounded diameter. In this setting, the maximum distance between the center and the centroid and between the center and the subtree core are determined, but the problem of the maximum distance between the
centroid and the subtree core becomes challenging. It can be solved in terms of the problem of minimizing the number of root-containing subtrees in a rooted tree of given order and height.
While this latter problem remains unsolved, in Section~\ref{sec: Min_subtrees} we provide a partial characterization of the extremal structure.

Let us begin with formal definitions. The distance in the tree from $u$ to $v$, denoted $d(u,v)$, is the number of edges on their unique connecting path $P(u,v)$.

\begin{defn}
The {\em eccentricity} of a vertex $v$ in a tree $T$ is \[ecc_T(v)=\max_{u \in V(T)}  d(v,u) .\] The {\em center} of $T$, denoted $C(T)$, is the set of vertices which have the minimum eccentricity among all vertices in the tree.
\label{def: C}
\end{defn}

\begin{defn}
The {\em distance} of a vertex $v$ in a tree $T$, denoted $d_T(v)$, is  \[d_T(v)=\sum_{u \in V(T)} d(v,u).\] The {\em centroid} of $T$, denoted $CT(T)$, is the set of vertices which have the minimum distance among all vertices in the tree.
\label{def: CT}
\end{defn}

A \emph{subtree} of tree $T$ is a connected subgraph which is induced on a nonempty set of vertices. We consider $T$ to be a subtree of itself and a single vertex is also a subtree of $T$. 
\begin{defn}
As the name suggests, the \textit{number of subtrees} of a vertex $v$ in a tree $T$, denoted $F_T(v)$, is the number of subtrees of $T$ which contain $v$.  The {\em subtree core} of a tree $T$, denoted $Core(T)$, is the set of vertices that maximize the function $F_T(.)$ \cite{laszlo}.
\end{defn}

If $H$ is a forest and $v$ is a vertex in $H$, then $F_H(v)$ will be defined, as above, to be the number of subtrees of $H$ which contain vertex $v$. In particular, all subtrees which are counted must be subtrees of the component of $H$ which contains vertex $v$. 

Jordan \cite{jordan} found that $C(T)$ consists of either one vertex or two adjacent vertices (see also Ex. 6.21a in \cite{lovasz}).
Given the vertices along any path of a tree, 
the sequence of $F_T(.)$ function values is strictly concave down (\cite{laszlo}), 
the sequence of $d(.)$ function values are strictly concave up (Ex. 6.22 in \cite{lovasz}; \cite{entringer}), and 
the sequence of $ecc_T(.)$ function values are  concave up (Ex. 6.21 in \cite{lovasz}).
Strict concavity immediately implies that the sets $CT(T)$ and $Core(T)$ either consist of one vertex or two adjacent vertices. 

We are specifically interested in how the middle sets are located, relative to one another. It is well-known that $C(T)$ and $CT(T)$ can be far apart 
(Ex. 6.22c in \cite{lovasz}), and that $Core(T)$ can differ from them \cite{laszlo}. 

There are some natural questions that we will explore. How close to each other can they be? How far apart can they be spread? Must they lie on a common path? Can they appear in any ordering? 

It is easy to find trees where $C(T)$, $CT(T)$, and $Core(T)$ coincide, like the star and paths of even length to name a few. It is more interesting to see how far apart these middle sets can be in a single tree.

Considering one vertex from each of $C(T)$, $CT(T)$, and $Core(T)$, any two of these must lie on a common path.
However, it is possible that the vertices from $C(T)$, $CT(T)$, and $Core(T)$ in the same tree $T$ do not all lie on a common path. Figure~\ref{fig:ex1} provides an example 
of this very situation.

\begin{figure}[htbp]
\centering
\begin{tabular}{c}
    \begin{tikzpicture}[scale=1]
        \node[fill=black,circle,inner sep=1pt] (t1) at (0,1) {};
        \node[fill=black,circle,inner sep=1pt] (t2) at (1,1) {};
        \node[fill=black,circle,inner sep=1pt] (t3) at (2,1) {};
        \node[fill=black,circle,inner sep=1pt] (t4) at (3,1) {};
        \node[fill=black,circle,inner sep=1pt] (t5) at (4,1) {};
        \node[fill=black,circle,inner sep=1pt] (t6) at (5,1) {};
        \node[fill=black,circle,inner sep=1pt] (t7) at (6,1) {};
        \node[fill=black,circle,inner sep=1pt] (t8) at (7,1) {};
        
        \node[fill=black,circle,inner sep=1pt] (t26) at (5,0) {};
        \node[fill=black,circle,inner sep=1pt] (t27) at (6,0) {};
        \node[fill=black,circle,inner sep=1pt] (t28) at (7,0) {};
        
        \node[fill=black,circle,inner sep=1pt] (t36) at (5,2) {};
        \node[fill=black,circle,inner sep=1pt] (t37) at (6,2) {};
        \node[fill=black,circle,inner sep=1pt] (t38) at (7,2) {};
        
        \node[fill=black,circle,inner sep=1pt] (t14) at (3,0) {};
        \node[fill=black,circle,inner sep=1pt] (t15) at (2.5,-.5) {};
        \node[fill=black,circle,inner sep=1pt] (t17) at (3.5,-.5) {};
        
        \draw (t1)--(t2);
        \draw (t3)--(t6);
        \draw (t8)--(t7);
        \draw (t4)--(t14);
        \draw [dashed] (t2)--(t3);
        \draw [dashed] (t7)--(t6);
        
        \draw [dashed] (t27)--(t26);
        \draw (t28)--(t27);
        \draw (t26)--(t5);
        
        \draw [dashed] (t37)--(t36);
        \draw (t38)--(t37);
        \draw (t36)--(t5);
        
        \draw (t15)--(t14);
        \draw (t17)--(t14);

        \node at (3,-.4) {$\ldots$};
        \node at (2,1.2) {$v$};
        \node at (4,1.2) {$u$};
        \node at (3.2,0.1) {$w$};
        \node at (6,.6) {$\underbrace{\hspace{5.6 em}}_{11 \hbox{ vertices}}$};
        \node at (1,.6) {$\underbrace{\hspace{5.6 em}}_{14 \hbox{ vertices}}$};
        \node at (3,-.9) {$\underbrace{\hspace{3 em}}_{15 \hbox{ vertices}}$};
        \node at (6,1.6) {$\underbrace{\hspace{5.6 em}}_{11 \hbox{ vertices}}$};
        \node at (6,-.4) {$\underbrace{\hspace{5.6 em}}_{11 \hbox{ vertices}}$};
        
        \node at (-.7,0) {};
        \node at (8,0) {};
        
        \end{tikzpicture}
        \end{tabular}
\caption{A tree with  $v \in C(T)$, $u \in CT(T)$, $w \in Core(T)$ which do not lie on a common path. Further, all of these middle parts are singletons.}\label{fig:ex1}
\end{figure}
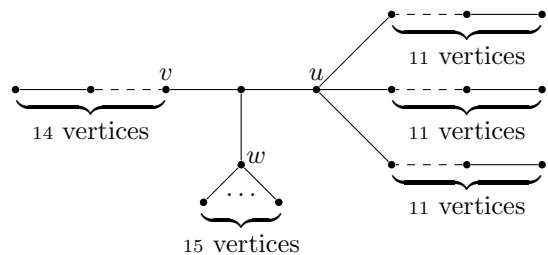 

On the other hand, when the vertices of $C(T)$, $CT(T)$, $Core(T)$ happen to lie on the same path, they can appear in any order. Figure \ref{fig:ex4} provides some illustrations. 

Among the examples with different ordering of middle vertices, it is interesting to observe that vertices in $Core(T)$  often have large degree; vertices in $C(T)$ often have  small degree; while vertices in $CT(T)$ behave somewhat between the previous two.

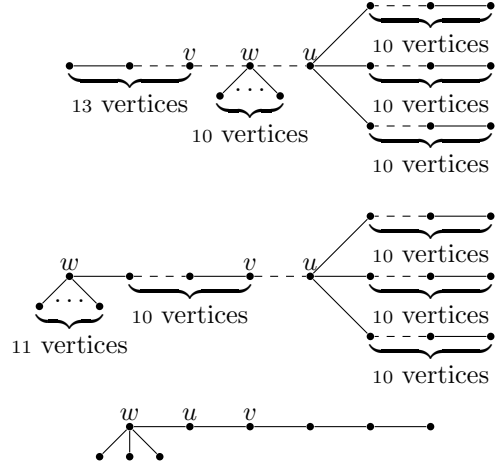
\begin{figure}[htbp]
\centering
\begin{tabular}{c}
    \begin{tikzpicture}[scale=.8]
        \node[fill=black,circle,inner sep=1pt] (t1) at (0,1) {};
        \node[fill=black,circle,inner sep=1pt] (t2) at (1,1) {};
        \node[fill=black,circle,inner sep=1pt] (t3) at (2,1) {};
        \node[fill=black,circle,inner sep=1pt] (t4) at (3,1) {};
        \node[fill=black,circle,inner sep=1pt] (t5) at (4,1) {};
        \node[fill=black,circle,inner sep=1pt] (t6) at (5,1) {};
        \node[fill=black,circle,inner sep=1pt] (t7) at (6,1) {};
        \node[fill=black,circle,inner sep=1pt] (t8) at (7,1) {};
        
        \node[fill=black,circle,inner sep=1pt] (t26) at (5,0) {};
        \node[fill=black,circle,inner sep=1pt] (t27) at (6,0) {};
        \node[fill=black,circle,inner sep=1pt] (t28) at (7,0) {};
        
        \node[fill=black,circle,inner sep=1pt] (t36) at (5,2) {};
        \node[fill=black,circle,inner sep=1pt] (t37) at (6,2) {};
        \node[fill=black,circle,inner sep=1pt] (t38) at (7,2) {};
        
        \node[fill=black,circle,inner sep=1pt] (t15) at (2.5,.5) {};
        \node[fill=black,circle,inner sep=1pt] (t17) at (3.5,.5) {};
        
        \draw (t1)--(t2);
        \draw (t5)--(t6);
        \draw [dashed] (t3)--(t5);
        \draw (t8)--(t7);

        \draw [dashed] (t2)--(t3);
        \draw [dashed] (t7)--(t6);
        
        \draw [dashed] (t27)--(t26);
        \draw (t28)--(t27);
        \draw (t26)--(t5);
        
        \draw [dashed] (t37)--(t36);
        \draw (t38)--(t37);
        \draw (t36)--(t5);
        
        \draw (t15)--(t4);
        \draw (t17)--(t4);

        \node at (3,.6) {$\ldots$};
        \node at (2,1.2) {$v$};
        \node at (4,1.2) {$u$};
        \node at (3,1.2) {$w$};
       
       \node at (6,.55) {$\underbrace{\hspace{4.6 em}}_{10 \hbox{ vertices}}$};
               \node at (1,.55) {$\underbrace{\hspace{4.6 em}}_{13 \hbox{ vertices}}$};
               \node at (3,.05) {$\underbrace{\hspace{2.5 em}}_{10 \hbox{ vertices}}$};
               \node at (6,1.55) {$\underbrace{\hspace{4.6 em}}_{10 \hbox{ vertices}}$};
               \node at (6,-.45) {$\underbrace{\hspace{4.6 em}}_{10 \hbox{ vertices}}$};

    \begin{scope}[shift={+(0,-3.5)}]
        \node[fill=black,circle,inner sep=1pt] (t1) at (0,1) {};
        \node[fill=black,circle,inner sep=1pt] (t2) at (1,1) {};
        \node[fill=black,circle,inner sep=1pt] (t3) at (2,1) {};
        \node[fill=black,circle,inner sep=1pt] (t4) at (3,1) {};
        \node[fill=black,circle,inner sep=1pt] (t5) at (4,1) {};
        \node[fill=black,circle,inner sep=1pt] (t6) at (5,1) {};
        \node[fill=black,circle,inner sep=1pt] (t7) at (6,1) {};
        \node[fill=black,circle,inner sep=1pt] (t8) at (7,1) {};
        
        \node[fill=black,circle,inner sep=1pt] (t26) at (5,0) {};
        \node[fill=black,circle,inner sep=1pt] (t27) at (6,0) {};
        \node[fill=black,circle,inner sep=1pt] (t28) at (7,0) {};
        
        \node[fill=black,circle,inner sep=1pt] (t36) at (5,2) {};
        \node[fill=black,circle,inner sep=1pt] (t37) at (6,2) {};
        \node[fill=black,circle,inner sep=1pt] (t38) at (7,2) {};
        
        \node[fill=black,circle,inner sep=1pt] (t15) at (-.5,.5) {};
        \node[fill=black,circle,inner sep=1pt] (t17) at (.5,.5) {};
        
        \draw (t1)--(t2);
        \draw (t5)--(t6);
        \draw [dashed] (t4)--(t5);
        \draw (t8)--(t7);
        \draw (t3)--(t4);

        \draw [dashed] (t2)--(t3);
        \draw [dashed] (t7)--(t6);
        
        \draw [dashed] (t27)--(t26);
        \draw (t28)--(t27);
        \draw (t26)--(t5);
        
        \draw [dashed] (t37)--(t36);
        \draw (t38)--(t37);
        \draw (t36)--(t5);
        
        \draw (t15)--(t1);
        \draw (t17)--(t1);

        \node at (0,.6) {$\ldots$};
        \node at (0,1.2) {$w$};
        \node at (4,1.2) {$u$};
        \node at (3,1.2) {$v$};
       
        \node at (6,.55) {$\underbrace{\hspace{4.6 em}}_{10 \hbox{ vertices}}$};
                      \node at (2,.55) {$\underbrace{\hspace{4.6 em}}_{10 \hbox{ vertices}}$};
                      \node at (0,.05) {$\underbrace{\hspace{2.5 em}}_{11 \hbox{ vertices}}$};
                      \node at (6,1.55) {$\underbrace{\hspace{4.6 em}}_{10 \hbox{ vertices}}$};
                      \node at (6,-.45) {$\underbrace{\hspace{4.6 em}}_{10 \hbox{ vertices}}$};

        \end{scope}

\begin{scope}[shift={+(0,-6)}]
        \node[fill=black,circle,inner sep=1pt] (t2) at (1,1) {};
        \node[fill=black,circle,inner sep=1pt] (t3) at (2,1) {};
        \node[fill=black,circle,inner sep=1pt] (t4) at (3,1) {};
        \node[fill=black,circle,inner sep=1pt] (t5) at (4,1) {};
        \node[fill=black,circle,inner sep=1pt] (t6) at (5,1) {};
        \node[fill=black,circle,inner sep=1pt] (t7) at (6,1) {};

        \node[fill=black,circle,inner sep=1pt] (t15) at (.5,.5) {};
        \node[fill=black,circle,inner sep=1pt] (t16) at (1,.5) {};
        \node[fill=black,circle,inner sep=1pt] (t17) at (1.5,.5) {};
        
        \draw (t5)--(t6);
        \draw (t4)--(t5);
        \draw (t3)--(t4);

        \draw (t2)--(t3);
        \draw (t7)--(t6);

        \draw (t15)--(t2);
        \draw (t16)--(t2);
        \draw (t17)--(t2);

        \node at (1,1.2) {$w$};
        \node at (3,1.2) {$v$};
        \node at (2,1.2) {$u$};
       
       \node at (-1.5,0) {};
       \node at (8.5,0) {};
        \end{scope}
        \end{tikzpicture}
        \end{tabular}
\caption[Trees with vertices from the center, centroid, and subtree core on a common path, but in different orders. Here all these middle parts are singletons.]{Trees with vertices $v \in C(T)$, $u \in CT(T)$, $w \in Core(T)$ on a common path, but in different orders. Here each middle part is a singleton.}\label{fig:ex4}
\end{figure}

Before proceeding to examining the largest distances between different middle parts, we formalize some necessary and sufficient conditions for a vertex to be in a middle part. Although not all of them are formally stated in the literature, we leave their relatively straightforward proofs to the reader.

\begin{prop}\label{prop:mid_c} Let $T$ be a tree with at least two vertices.
A vertex $v$ is in the center $C(T)$ if and only if there are two leaves, $u$ and $w$, such that $P(v,u)\cap P(v,w) =\{v\}$, $d(v,u)=ecc_T(v)$, and $d(v,w)\geq ecc_T(v)-1$. 
\end{prop}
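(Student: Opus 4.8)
The plan is to prove both implications by combining two facts already available: in a tree the farthest vertex from any given vertex can always be taken to be a leaf, and the eccentricity function is concave up (convex) along every path (Ex.~6.21 in \cite{lovasz}). Write $r = \ecc_T(v)$ throughout.

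For the reverse implication, suppose the two leaves $u,w$ exist and let $v'$ be an arbitrary neighbor of $v$. I would show $\ecc_T(v') \geq r$ by a two-case split. If $v'$ lies on $P(v,u)$, then since $P(v,u)\cap P(v,w)=\{v\}$ the vertex $v'$ is off $P(v,w)$, so $d(v',w) = 1 + d(v,w) \geq 1 + (r-1) = r$. If $v'$ does not lie on $P(v,u)$, then $d(v',u) = 1 + d(v,u) = r+1$. In either case $\ecc_T(v') \geq r = \ecc_T(v)$. I would then feed this into convexity: for any vertex $x$ with $v$–$x$ path $v = v_0, v_1, \dots, v_k = x$, the concave-up property gives nondecreasing first differences, and since $\ecc_T(v_1) - \ecc_T(v_0) \geq 0$ it follows inductively that $\ecc_T(x) \geq \ecc_T(v)$. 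Hence $v$ minimizes eccentricity and $v \in C(T)$.

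For the forward implication, assume $v \in C(T)$, so $r = \ecc_T(v) = \rad(T)$. Extending any maximal $v$–path to a leaf produces a leaf $u$ with $d(v,u) = r$; let $v_1$ be the neighbor of $v$ on $P(v,u)$. The crux is manufacturing the second leaf $w$, which I would do by contradiction: suppose every leaf $w$ with $P(v,w)\cap P(v,u)=\{v\}$ satisfies $d(v,w) \leq r-2$. Then every vertex outside the branch of $v_1$ is within distance $r-2$ of $v$, hence within distance $r-1$ of $v_1$, while every vertex inside that branch is within distance $r-1$ of $v_1$ (its distance to $v$ is at most $r$ and its $v$–path runs through $v_1$). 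Thus every vertex of $T$ lies within distance $r-1$ of $v_1$, giving $\ecc_T(v_1) \leq r-1 < \rad(T)$, a contradiction. This forces a leaf $w$ in a branch other than that of $u$ with $d(v,w) \geq r-1$, and $P(v,u)\cap P(v,w)=\{v\}$ is then automatic.

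The one delicate point—the main obstacle—is the bookkeeping in the contradiction step, specifically the bound $d(v_1,v) = 1 \leq r-1$, which silently requires $r \geq 2$. The case $r = 1$ must be handled separately: there $v$ is adjacent to every other vertex, so I would take $u,w$ to be two distinct leaves, except for the single tree $P_2$, where the only way to meet $P(v,u)\cap P(v,w)=\{v\}$ is the degenerate choice $w = v$ (legitimate, since $v$ is then itself a leaf and $d(v,w) = 0 = r-1$). I would therefore add a remark fixing the convention that $u$ and $w$ need not be distinct from $v$ in such degenerate trees, after which both directions of the argument read uniformly.
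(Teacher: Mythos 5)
Your proof is correct. There is actually nothing in the paper to compare it against: Proposition~\ref{prop:mid_c} is one of the facts whose ``relatively straightforward proofs'' the authors explicitly leave to the reader, so your argument fills a genuine gap. Both directions check out. In the reverse direction, the two-case split on a neighbor $v'$ of $v$ is valid (it rests on the standard fact that if $v'$ is not on $P(v,z)$ then $d(v',z)=d(v,z)+1$), and feeding the resulting inequality $ecc_T(v')\geq ecc_T(v)$ into the concave-up property of eccentricity along paths (which the paper cites from Lov\'asz) legitimately propagates the inequality to every vertex. In the forward direction, the contradiction argument is sound, including the implicit step that vertices outside the branch of $v_1$ inherit the bound $r-2$ from the leaves (every such vertex extends to a leaf whose $v$-path avoids $v_1$); and your flagging of the hidden requirement $r\geq 2$, together with the degenerate tree on two vertices --- where the statement holds only under the convention that $w=v$ is permitted when $v$ is itself a leaf --- identifies a real subtlety in the statement as written. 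One simplification worth noting: the appeal to convexity is dispensable. Apply your same two-case split to the neighbor $v'$ of $v$ on $P(v,x)$ for an \emph{arbitrary} vertex $x\neq v$: if $v'$ lies on $P(v,u)$ then $x$ and $w$ lie in different components of $T-v$, so $d(x,w)=d(x,v)+d(v,w)\geq r$, while otherwise $x$ and $u$ lie in different components, so $d(x,u)=d(x,v)+d(v,u)>r$. Either way $ecc_T(x)\geq r$ directly, with no concavity needed.
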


\begin{cor}
If there are two leaves $u,w$ such that $d(v,w) =d(v,u)= ecc_T(v)$, then $C(T)=\{v\}$. If no such $w$ exists, then $|C(T)|=2$ where the neighbor of $v$ on $P(u,v)$ is also in the center. 
\end{cor}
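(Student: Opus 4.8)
The plan is to derive both statements from Proposition~\ref{prop:mid_c} together with the fact recorded above that $C(T)$ consists of one vertex or two adjacent vertices. Throughout I take $v\in C(T)$ and fix the two leaves $u,w$ furnished by Proposition~\ref{prop:mid_c}, so that $P(v,u)\cap P(v,w)=\{v\}$, $d(v,u)=ecc_T(v)$, and $d(v,w)\ge ecc_T(v)-1$; in particular $P(v,u)$ and $P(v,w)$ leave $v$ through two \emph{distinct} neighbors. Since $v$ has minimum eccentricity, $ecc_T(x)\ge ecc_T(v)$ for every vertex $x$, so deciding whether a neighbor $v'$ of $v$ belongs to $C(T)$ reduces to comparing $ecc_T(v')$ with $ecc_T(v)$. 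I will use repeatedly the elementary fact that, for a neighbor $v'$ of $v$ and any vertex $x$, one has $d(v',x)=d(v,x)-1$ if $P(v,x)$ passes through $v'$, and $d(v',x)=d(v,x)+1$ otherwise (in the latter case $v$ separates $v'$ from $x$).

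For the first assertion, suppose both leaves attain the eccentricity, $d(v,u)=d(v,w)=ecc_T(v)$. I would show that no neighbor of $v$ is central. Fix a neighbor $v'$. Because $P(v,u)$ and $P(v,w)$ depart $v$ through different neighbors, $v'$ can lie on at most one of them, so at least one of $u,w$—say $u$—has $P(v,u)$ avoiding $v'$. Then $d(v',u)=1+d(v,u)=1+ecc_T(v)$, whence $ecc_T(v')\ge 1+ecc_T(v)>ecc_T(v)$ and $v'\notin C(T)$. As this holds for every neighbor of $v$ and $C(T)$ is one vertex or two adjacent vertices, $C(T)=\{v\}$.

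For the second assertion, assume that no leaf $w$ with $d(v,w)=ecc_T(v)$ and $P(v,u)\cap P(v,w)=\{v\}$ exists, and let $v'$ be the neighbor of $v$ on $P(u,v)$. I claim $ecc_T(v')=ecc_T(v)$. To bound $ecc_T(v')$ from above, take any vertex $x$. If $P(v,x)$ passes through $v'$, then $d(v',x)=d(v,x)-1\le ecc_T(v)-1$. If $P(v,x)$ avoids $v'$, then $P(v,x)\cap P(v,u)=\{v\}$; since any vertex at distance $ecc_T(v)$ from $v$ must be a leaf, the standing hypothesis forbids $d(v,x)=ecc_T(v)$ for such an $x$, so $d(v,x)\le ecc_T(v)-1$ and $d(v',x)=d(v,x)+1\le ecc_T(v)$. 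Hence $ecc_T(v')\le ecc_T(v)$, and combined with minimality $ecc_T(v')\ge ecc_T(v)$ this gives equality, so $v'\in C(T)$. The leaf $w$ of Proposition~\ref{prop:mid_c}, lying at distance $ecc_T(v)-1$ from $v$ off the $v'$-direction, even certifies $d(v',w)=ecc_T(v)$ explicitly. Since $v$ and its neighbor $v'$ are both central and $C(T)$ has at most two (adjacent) vertices, $C(T)=\{v,v'\}$ and $|C(T)|=2$.

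The distance identities are routine, resting only on whether a fixed neighbor $v'$ of $v$ lies on the $v$-to-$x$ path. The one point requiring care is in the second assertion: one must read the failure clause ``no such $w$'' as the assertion that \emph{every} vertex attaining the eccentricity of $v$ leaves $v$ in the single direction of $v'$. This is exactly what collapses the upper bound for $ecc_T(v')$ to $ecc_T(v)$ and forces the center to be the edge $vv'$ rather than a larger or differently located set.
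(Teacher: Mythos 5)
The paper gives no proof of this corollary---it is among the statements whose ``relatively straightforward proofs'' are left to the reader---so there is no argument of the authors' to compare yours against; your proof is correct and is the natural one. Both halves rest on exactly the two facts you invoke: distances to a fixed vertex change by $\pm 1$ across an edge of a tree (decreasing precisely when the edge lies on the connecting path), and Jordan's theorem that $C(T)$ is one vertex or two adjacent vertices, so only neighbors of $v$ need to be tested. Your closing caveat is also well taken and is genuinely necessary: the corollary must be read with $v\in C(T)$ and with the leaves $u,w$ as furnished by Proposition~\ref{prop:mid_c}, so that $P(v,u)\cap P(v,w)=\{v\}$; read literally without path-disjointness the first assertion is false---for instance, glue two paths of length $k-1$ together at a vertex $z$, join $z$ to $v$, and hang a path of length $k-1$ from $v$: then $v\in C(T)$ and two leaves attain $d(v,\cdot)=ecc_T(v)=k$, both through $z$, yet $C(T)=\{v,z\}\neq\{v\}$.
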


Next we give a characterization of the vertices in $CT(T)$. Note that if $uv$ is an edge in tree $T$, then $T-uv$ will denote the forest that results after the deletion of edge $uv$ from $T$. 
\begin{prop}\label{prop:mid_ct} 
Let $T$ be a tree with at least two vertices.
A vertex $u$ is in the centroid $CT(T)$ if and only if for each neighbor $v$ of $u$, we have
$$ n_{uv}(v) \leq n_{uv}(u) $$
where $n_{uv}(u)$ $(n_{uv}(v))$ denotes the number of vertices in the component containing $u\;(v)$ in $T - uv$. Furthermore, if $u\in CT(T)$ and equality holds above, then $v\in CT(T)$ as well.
\end{prop}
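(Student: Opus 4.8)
The whole statement rests on one computation: how $d_T$ changes across a single edge. So the first thing I would do is fix adjacent vertices $u,v$, delete the edge $uv$, and let $A$ (resp.\ $B$) be the component containing $u$ (resp.\ $v$), so that $|A|=n_{uv}(u)$ and $|B|=n_{uv}(v)$. For $x\in A$ every $v$--$x$ path runs through $u$, giving $d(v,x)=d(u,x)+1$, while for $x\in B$ every $u$--$x$ path runs through $v$, giving $d(v,x)=d(u,x)-1$. Summing over all $x\in V(T)$ collapses the two-argument distances and leaves the identity
\[
d_T(v)-d_T(u)=n_{uv}(u)-n_{uv}(v),
\]
which I would then use throughout.

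The forward direction and the ``furthermore'' clause fall out directly from this identity. If $u\in CT(T)$, then $d_T(u)\le d_T(v)$ for every neighbor $v$, so the identity forces $n_{uv}(u)-n_{uv}(v)\ge 0$, i.e.\ $n_{uv}(v)\le n_{uv}(u)$, as claimed. Moreover, if equality $n_{uv}(v)=n_{uv}(u)$ holds for some neighbor $v$, the identity gives $d_T(v)=d_T(u)$, which is the minimum value of $d_T$, so $v\in CT(T)$ as well.

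For the reverse direction I would assume $n_{uv}(v)\le n_{uv}(u)$ for every neighbor $v$ of $u$; the identity then says $d_T(v)\ge d_T(u)$ for each neighbor, i.e.\ $u$ is a \emph{local} minimizer of $d_T$. The one substantive step is upgrading this to a global minimum, and I would carry it out using the strict convexity (``concave up'') of the vertex-distance function along any path, recorded in the preliminaries (cf.\ \cite{lovasz,entringer}). Given an arbitrary vertex $w$, I would walk along the path $u=x_0,x_1,\dots,x_k=w$; strict convexity makes the successive differences $d_T(x_{i+1})-d_T(x_i)$ strictly increasing in $i$. Since $x_1$ is a neighbor of $u$, the first difference is $\ge 0$, hence every later difference is strictly positive, and telescoping yields $d_T(w)\ge d_T(u)$. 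As $w$ was arbitrary, $u$ minimizes $d_T$ globally, so $u\in CT(T)$.

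The only delicate point is this local-to-global passage, which I expect to be the main obstacle; it is precisely where the path-convexity from the preliminaries does the work, while everything else is bookkeeping around the edge identity. If one preferred a self-contained argument avoiding the quoted convexity, one could instead iterate the edge identity: starting from any non-minimizer there is always a neighbor with strictly smaller $d_T$, so every minimizer must satisfy the branch-weight inequalities, and conversely a vertex satisfying them admits no single-edge improvement and is therefore optimal.
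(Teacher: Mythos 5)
Your proof is correct: the edge identity $d_T(v)-d_T(u)=n_{uv}(u)-n_{uv}(v)$ settles the forward direction and the equality clause, and the strict convexity (``concave up'') of $d_T$ along paths, which the paper records in its preliminaries citing Lov\'asz and Entringer et al., correctly upgrades the local minimum at $u$ to a global one. The paper explicitly leaves this proposition's proof to the reader, and your argument is precisely the intended standard one, so it is complete and matches the paper's approach.
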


Lastly, Proposition~\ref{prop:mid_core} gives a characterization of $Core(T)$.
\begin{prop}\label{prop:mid_core}
A vertex $u$ is in $Core(T)$ if and only if for each neighbor $v$ of $u$, we have
$$ F_{T-uv}(u) \geq F_{T-uv}(v).$$ Furthermore, if $u\in Core(T)$ and equality holds above, then $v\in Core(T)$ as well.
\end{prop}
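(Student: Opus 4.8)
The entire argument rests on a single multiplicative decomposition of $F_T$ across an edge. Fix an edge $uv$ and let $T_u$ and $T_v$ denote the two components of $T-uv$, containing $u$ and $v$ respectively. The plan is to count the subtrees of $T$ containing $u$ according to whether or not they also contain $v$. A subtree containing $u$ but not $v$ lies entirely in $T_u$ and contains $u$, so there are exactly $F_{T-uv}(u)$ of these. A subtree containing both $u$ and $v$ must use the edge $uv$ and therefore splits uniquely as the union of a subtree of $T_u$ containing $u$ and a subtree of $T_v$ containing $v$, chosen independently; this contributes $F_{T-uv}(u)\cdot F_{T-uv}(v)$. Hence
\[ F_T(u) = F_{T-uv}(u)\bigl(1+F_{T-uv}(v)\bigr), \]
and by symmetry $F_T(v)=F_{T-uv}(v)\bigl(1+F_{T-uv}(u)\bigr)$.

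Subtracting these two expressions, the cross term $F_{T-uv}(u)\cdot F_{T-uv}(v)$ cancels and one is left with the clean identity
\[ F_T(u)-F_T(v) = F_{T-uv}(u)-F_{T-uv}(v). \]
This is the crux of the proof: it shows that the neighbor condition $F_{T-uv}(u)\ge F_{T-uv}(v)$ is equivalent to $F_T(u)\ge F_T(v)$, and that equality transfers between the two formulations. With this in hand, both directions of the proposition reduce to comparing $F_T$ values at adjacent vertices.

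For the forward direction, if $u\in Core(T)$ then $F_T(u)\ge F_T(w)$ for every vertex $w$, in particular for every neighbor $v$, and the identity gives $F_{T-uv}(u)\ge F_{T-uv}(v)$. For the converse I would argue that the local condition forces a global maximum. Suppose $F_{T-uv}(u)\ge F_{T-uv}(v)$, equivalently $F_T(u)\ge F_T(v)$, for every neighbor $v$. Given any other vertex $w$, consider the path from $u$ to $w$ and let $v$ be its first vertex after $u$. Along this path the sequence of $F_T$ values is strictly concave down, so its consecutive differences are strictly decreasing. Since the first difference $F_T(v)-F_T(u)$ is at most $0$, every subsequent difference is strictly negative, so $F_T(w)\le F_T(v)\le F_T(u)$; hence $u$ maximizes $F_T$ and $u\in Core(T)$. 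Finally, for the ``furthermore'' clause, if $u\in Core(T)$ and $F_{T-uv}(u)=F_{T-uv}(v)$ for some neighbor $v$, the identity yields $F_T(v)=F_T(u)=\max_{w} F_T(w)$, so $v\in Core(T)$ as well.

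The only genuinely delicate point is the converse: passing from ``$u$ beats each of its neighbors'' to ``$u$ beats every vertex.'' This is exactly where the strict concavity of $F_T$ along paths, recorded earlier in the excerpt, is essential—ordinary concavity would not preclude the values leveling off and later rising, whereas strict concavity guarantees that once the sequence fails to increase at the first step it decreases strictly thereafter.
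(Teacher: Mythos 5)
Your proof is correct. The paper itself does not prove Proposition~\ref{prop:mid_core} --- it explicitly leaves these characterizations "to the reader" --- but your argument is exactly the intended one: the edge decomposition $F_T(u)=F_{T-uv}(u)\bigl(1+F_{T-uv}(v)\bigr)$ is the same product formula the paper uses elsewhere (e.g.\ in the proof of Claim~\ref{claim:min_max_Core}), and your use of the strict concavity of $F_T(\cdot)$ along paths, quoted in the paper's preliminaries from \cite{laszlo}, is precisely what is needed to upgrade the local condition "$u$ beats its neighbors" to the global statement "$u$ maximizes $F_T$," including the transfer of equality in the "furthermore" clause.
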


For completeness, we provide a proof for the following simple fact.

\begin{claim}\label{claim:min_max_Core}
Among rooted trees of order $n$, the number of subtrees containing the root is at most $2^{n-1}$, achieved only by the star rooted at the center; and at least $n$, achieved only by the path rooted at one end vertex.
\end{claim}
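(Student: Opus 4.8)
The plan is to induct on the order $n$, with the engine being a product recursion for the number of root-containing subtrees. Write $r$ for the root and let $c_1,\dots,c_k$ be its children, with $T_i$ the subtree rooted at $c_i$ and $n_i=|V(T_i)|$, so that $n=1+\sum_{i=1}^k n_i$. A subtree containing $r$ is obtained by choosing, independently for each $i$, either to include none of $T_i$ or to include a subtree of $T_i$ containing $c_i$; hence
\[
F_T(r)=\prod_{i=1}^k\bigl(1+F_{T_i}(c_i)\bigr).
\]
This identity reduces both bounds and both equality cases to elementary inequalities on the factors, and the base case $n=1$ (a single vertex, $F_T(r)=1$) is immediate.

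For the upper bound I would either count directly or use the recursion. The direct route: every root-containing subtree is in particular a subset of $V(T)$ containing $r$, and there are exactly $2^{n-1}$ such subsets, so $F_T(r)\le 2^{n-1}$. Equality forces every subset containing $r$ to be connected; picking any vertex at distance $\ge 2$ from $r$ together with $r$ gives a disconnected set, so equality holds precisely when every non-root vertex is adjacent to $r$, i.e. when $T$ is the star rooted at its center. Via the recursion, the same conclusion follows from $1+2^{\,n_i-1}\le 2^{\,n_i}$ (equality iff $n_i=1$) applied factorwise, which again pins down equality to the case that every child is a leaf.

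For the lower bound I would feed the inductive hypothesis $F_{T_i}(c_i)\ge n_i$ into the recursion to get $F_T(r)\ge\prod_i(1+n_i)$, and then observe that expanding the product gives $\prod_i(1+n_i)=1+\sum_i n_i+(\text{nonnegative cross terms})\ge 1+\sum_i n_i=n$. For the equality characterization, I would track when each inequality is tight: the cross terms vanish only when at most one $n_i$ is nonzero, and since every $n_i\ge 1$ this forces $k\le 1$; and the inductive equality $F_{T_i}(c_i)=n_i$ forces $T_1$ to be a path rooted at the endpoint $c_1$. Together these say $r$ has a single child beginning a rooted path, so $T$ itself is a path rooted at an end vertex, closing the induction.

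I expect the main obstacle to be the uniqueness (equality) half, especially for the lower bound: establishing the bare inequalities is routine, but one must argue carefully that tightness propagates, i.e. that equality in $F_T(r)\ge n$ simultaneously forces the root to have exactly one child and forces the descendant subtree to be extremal in the same sense. Isolating the statement ``$\prod_i(1+n_i)=n$ iff $k\le 1$'' and combining it cleanly with the inductive equality hypothesis is the delicate bookkeeping; everything else is direct counting. An alternative for the lower-bound inequality that avoids induction is to note that the $n$ root-to-vertex paths $P(r,v)$ are pairwise distinct root-containing subtrees, but this shortcut still leaves the equality analysis to be done, so I would keep the inductive product argument as the backbone.
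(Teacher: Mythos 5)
Your proposal is correct and follows essentially the same route as the paper: induction on $n$ driven by the recursion $F_T(r)=\prod_i\bigl(1+F_{T_i}(c_i)\bigr)$, with the upper bound via $1+2^{n_i-1}\le 2^{n_i}$ and the lower bound via $\prod_i(1+n_i)\ge 1+\sum_i n_i$, including the same equality analysis (all $n_i=1$ for the star; $k=1$ with $T_1$ a rooted path for the path). Your direct counting argument for the upper bound (subsets containing $r$, with the distance-$\ge 2$ vertex witnessing disconnectedness at equality) is a pleasant non-inductive shortcut for that half, but it does not change the overall structure of the argument.
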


\begin{proof}
We proceed by induction on $n$. Let $T$ be a tree of order $n$ with root $r$ and let $r$ be of degree $k$ with neighbors $v_1, \ldots, v_k$. Denote by $T_i$ the connected component containing $v_i$ in $T-\{r\}$ and let $n_i = |V(T_i)|$.

Then 
$$ F_T(r) = \prod_{i=1}^k \left( 1+ F_{T_i}(v_i) \right) \leq \prod_{i=1}^k \left( 1+ 2^{n_i-1} \right) \leq \prod_{i=1}^k 2^{n_i} = 2^{n-1}, $$
where the first inequality follows from induction hypothesis and equality holds in the second inequality if and only if $n_i=1$ for all $i$ (and consequnely $k=n-1$);
and
$$ F_T(r) = \prod_{i=1}^k \left( 1+ F_{T_i}(v_i) \right) \geq \prod_{i=1}^k \left( 1+ n_i \right) \geq 1+\sum_{i=1}^k n_i = n , $$
where the first inequality follows from induction hypothesis and equality holds in the second inequality if and only if $k=1$ and $T_1$ is a path with $v_1$ as a leaf.
\end{proof}

\section{Maximum distances between middle parts in general trees}
\label{dist middle parts}
In a tree $T$, the distance $d(S,S')$ between vertex sets $S$ and $S'$ is defined as the Hausdorff distance $\min\{d(u,v): u\in S, v\in S'\}$.
Fix an arbitrary $n\in \mathbb{Z}^+$. 
Among all trees with $n$ vertices, we determine the maximum distance that can be realized between the center, centroid, and subtree core. We will also see that these maximum distances are achieved precisely when $T$ has a ``comet'' structure.

\begin{defn}[Barefoot, Entringer, Sz\'ekely \cite{barefoot}]
An {\em $r$-comet of order $n$} is formed by attaching $n-r$ pendant vertices to one end vertex of a path on $r$ vertices (Figure~\ref{fig:r_com}).
\end{defn}

\begin{figure}
\centering
    \begin{tikzpicture}[scale=.8]
        \node[fill=black,circle,inner sep=1pt] (t2) at (1,1) {};
        \node[fill=black,circle,inner sep=1pt] (t3) at (2,1) {};
        \node[fill=black,circle,inner sep=1pt] (t4) at (3,1) {};
        \node[fill=black,circle,inner sep=1pt] (t5) at (4,1) {};
        \node[fill=black,circle,inner sep=1pt] (t6) at (5,1) {};
        \node[fill=black,circle,inner sep=1pt] (t21) at (0,1.5) {};
        \node[fill=black,circle,inner sep=1pt] (t22) at (0,.5) {};
        \node[fill=black,circle,inner sep=1pt] (t23) at (0,1.8) {};
        \node[fill=black,circle,inner sep=1pt] (t24) at (0,.2) {};
        \node[fill=black,circle,inner sep=1pt] (t25) at (0,1) {};

        \draw (t5)--(t6);
        \draw [dashed] (t3)--(t5);
        \draw (t2)--(t3);
        \draw (t21)--(t2)--(t22)--(t2)--(t23)--(t2)--(t24)--(t2)--(t25);
        \node at (3,.5) {$\underbrace{ \hspace{ 9 em } }_{\hbox{$r$ vertices }}$};
        
        
        \end{tikzpicture}
\caption{An $r$-comet of order $n$.}\label{fig:r_com}
\end{figure}
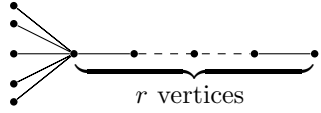

\subsection{Between center and centroid}
\label{sec:c_ct}
\begin{thm} Fix an arbitrary $n\in \mathbb{Z}^+$. 
For any tree $T$ with $n\geq 3$ vertices, 
\begin{align}
d( C(T), CT(T) ) \leq 
\left\lfloor \frac{n-3}{4} \right\rfloor.
\label{bound: c_ct}
\end{align}
\label{theo:c_ct}
\end{thm}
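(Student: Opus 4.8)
The plan is to fix one vertex $c$ in $C(T)$ and one vertex $g$ in $CT(T)$ that realize the distance $d := d(C(T),CT(T))$; concretely I would take $c$ to be the center vertex closest to the centroid and $g$ the centroid vertex closest to the center, so that $d = d(c,g)$, and I may assume $d \ge 1$. Write the path between them as $c = x_0, x_1, \dots, x_d = g$ and set $\rho = \operatorname{ecc}_T(c) = \operatorname{rad}(T)$. Let $A$ denote the component containing $c$ (equivalently $x_{d-1}$) in the forest $T - x_{d-1}x_d$. The whole argument rests on squeezing $|A|$ between a lower bound coming from the center condition and an upper bound coming from the centroid condition.

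First, two preliminary bounds. (i) I claim $d \le \rho - 1$: if $d = \rho$ then, rooting $T$ at $c$, the vertex $g$ sits at maximum depth and hence is a leaf, but by Proposition~\ref{prop:mid_ct} a leaf lies in $CT(T)$ only when $n \le 2$, contradicting $n \ge 3$. (ii) Applying the centroid characterization to the neighbor $x_{d-1}$ of $g$, $|A| = n_{gx_{d-1}}(x_{d-1}) \le n_{gx_{d-1}}(g) = n - |A|$, and equality would force $x_{d-1} \in CT(T)$, a centroid vertex strictly closer to $c$ than $g$, impossible by the choice of $g$. Hence $2|A| < n$, i.e. $|A| \le \lceil n/2\rceil - 1$.

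The crux is the matching lower bound $|A| \ge \rho + d$, and this is exactly where choosing $c$ as the center vertex \emph{closest} to $g$ is essential. I would show that some branch at $c$ pointing away from $g$ (not through $x_1$) has depth at least $\rho$. Suppose not; then every away-branch has depth $\le \rho - 1$, so the unique deepest branch, of depth $\rho$, is the one through $x_1$. Computing the eccentricity of $x_1$ gives $\operatorname{ecc}_T(x_1) = \max(\rho - 1,\, 1 + h')$, where $h' \le \rho - 1$ is the largest away-depth; thus $\operatorname{ecc}_T(x_1) \le \rho$, and since $\rho$ is the radius we get $\operatorname{ecc}_T(x_1) = \rho$, so $x_1 \in C(T)$. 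As $x_1$ is strictly closer to $g$ than $c$, this contradicts the choice of $c$. Consequently $A$ contains $c$, the $\ge \rho$ vertices of such an away-branch, and the $d-1$ interior path vertices $x_1,\dots,x_{d-1}$, all distinct, so $|A| \ge 1 + \rho + (d-1) = \rho + d$.

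Combining the three inequalities, $\rho \ge d+1$ together with $\rho + d \le |A| \le \lceil n/2\rceil - 1$ yields $2d + 1 \le \lceil n/2\rceil - 1$, and a short check over the four residues of $n \bmod 4$ turns this into $d \le \lfloor (n-3)/4\rfloor$, as desired. I expect the main obstacle to be precisely this crux step: obtaining the sharp constant rather than the weaker $\lfloor (n-1)/4\rfloor$ hinges entirely on forcing a depth-$\rho$ branch on the side away from $g$, which depends on measuring from the center vertex nearest the centroid and on the ``one or two adjacent vertices'' structure of $C(T)$; the remaining counting is routine. It would then remain only to confirm sharpness by checking that the appropriate comet attains equality.
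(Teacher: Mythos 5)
Your proof is correct and follows essentially the same route as the paper's: your set $A$ is exactly the complement of the paper's $T_u$ (the heavy branch at the centroid), and you squeeze its size between $2d+1$ (depth-$\rho$ branch at the center pointing away from the centroid, plus the fact that a centroid vertex is not a leaf) and roughly $n/2$ (strict centroid inequality), which is precisely the paper's chain of inequalities followed by the same residue check mod $4$. The only cosmetic difference is that you prove the existence of the depth-$\rho$ away-branch at $c$ directly, via the eccentricity computation showing $x_1$ would otherwise lie in $C(T)$, whereas the paper extracts this from its Proposition~\ref{prop:mid_c}, which it states without proof.
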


\begin{proof}
Fix a tree $T$ on $n$ vertices. Let $v \in C(T)$ and $u \in CT(T)$ such that the graph distance between $u$ and $v$ is precisely $d( C(T), CT(T))$. We assume $d(u,v)\geq 1$, otherwise we have nothing to prove. By the choice of $u$ and $v$, no vertex on the path $P(u,v)$ other than $u$ and $v$ is in the center or the centroid of $T$. 

Let $P(u,v)$ denote the path connecting $u$ and $v$ and let $T_u$ denote the component containing $u$ in $T-E(P(u,v))$. By Proposition~\ref{prop:mid_ct},
\[ |V(T_u)| > n - |V(T_u)| .\]
This implies
\[ |V(T_u)| > \frac{n}{2} .\] 

Let $w$ be a leaf such that $P(v,w)$ and $P(u,v)$ are disjoint, except for $v$, and the length of $P(v,w)$ is maximum. Because $v\in C(T)$ and the neighbor of $v$ on $P(u,v)$ is not in $C(T)$, Proposition \ref{prop:mid_c} tells
$$ d(v,w) = ecc_T(v). $$
As $n\geq 3$, $u$ is not a leaf. Hence it is easy to see that
$$ d(u,v) \leq ecc_T(v) - 1.  $$

Therefore, we have
\begin{eqnarray*}
\frac{n}{2} 
 >  n - |V(T_u)| \geq d(u,v) + d(v,w) \geq 2d(u,v) + 1 .
\end{eqnarray*}
This implies \[d(u,v) < \frac{ n -2 }{4 }.\]
In particular, if $n=4k+r$ with $r\in \{0,1,2\}$, then 
\[ k -\frac{1}{2} \leq \frac{ 4k+r -2 }{4} \leq  k .\]
Since $d(u,v) < \frac{ n -2 }{4 }$, when $n\equiv r \mod 4$ for $r\in \{0,1,2\}$, $d(u,v) \leq k-1$ where $k=\left\lfloor \frac{n}{4} \right\rfloor$. 

When $n=4k+3$, 
\[d(u,v) < \frac{ n -2 }{4 } = \frac{ 4k+1}{4} = k+\frac{1}{4}.\]
As a result, $d(u,v) \leq k$.
\end{proof}

\begin{prop}
Let $k:= \left\lfloor \frac{n}{4} \right\rfloor$. Equality holds in \eqref{bound: c_ct} exactly when $n$ and $T$ fall into one of the following categories: 
\begin{itemize}
\item $n = 4k$ and $T$ is the $2k$-comet.
\item $n=4k+1$  or $n=4k+2$ and $T$ is one of the following trees: 
	\begin{itemize}
	\item $2k$-comet
	\item $2k$-comet on $n-1$ vertices together with one vertex pendant to one of the degree 2 vertices on the path of the comet
	\item a tree consisting of a path on $2k+1$ vertices with an end vertex identified with the root of a height 2 tree 
	\end{itemize}
\item $n=4k+3$ and $T$ is a $(2k+2)$-comet.
\end{itemize}
\end{prop}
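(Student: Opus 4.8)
The plan is to prove the ``exactly when'' in both directions by tracking, term by term, where the inequality chain in the proof of Theorem~\ref{theo:c_ct} is tight, and then checking directly that the surviving trees realize the bound. I keep the notation of that proof: $v\in C(T)$ and $u\in CT(T)$ form a closest pair, $d:=d(u,v)$, $P(u,v)=p_0p_1\cdots p_d$ with $p_0=u$ and $p_d=v$, $T_u$ is the component of $u$ in $T-E(P(u,v))$, and $w$ is a farthest leaf from $v$ avoiding $P(u,v)$. I abbreviate $e:=ecc_T(v)=d(v,w)$ and $m:=n-|V(T_u)|=\sum_{i=1}^d c_i$, where $c_i$ is the order of the component of $p_i$ in $T-E(P(u,v))$. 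The relevant strict and non-strict inequalities are $m<n/2$ (from $|V(T_u)|>n/2$), $e\ge d+1$ (Proposition~\ref{prop:mid_c}), $c_i\ge1$, and $c_d\ge e+1$ (the $v$--$w$ path lies in $v$'s component), which together give $m\ge d+e\ge 2d+1$.

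For necessity I would assume $d$ is maximal for the residue of $n$ and refine these into equalities. The key extra ingredient beyond the theorem's proof is the \emph{global} eccentricity: since $e=ecc_T(v)$, any vertex of $T_u$ at distance $t$ from $u$ satisfies $d+t\le e$, so $T_u$ has radius at most $e-d$ about $u$. Let $s:=m-(d+e)\ge0$ be the slack in $m\ge d+e$. The constraints $d=\lfloor\frac{n-3}{4}\rfloor$ (maximality), $e\ge d+1$, $s\ge0$, and $m=d+e+s<n/2$ then confine $(e-d,s)$ to very few values. For $n=4k$ and $n=4k+3$ they force $(e-d,s)=(1,0)$ with no freedom, so $c_i=1$ for all interior indices, $c_d=e+1$, and $T_u$ is a star; the spine together with $T_u$ assembles into the unique $2k$-comet, respectively $(2k+2)$-comet. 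For $n\equiv1,2\pmod4$ (where $d=k-1$) exactly three cases survive: $(1,0)$ again gives the $2k$-comet; $(1,1)$ keeps $T_u$ a star but permits one ``extra'' vertex, which the radius bound forces to be a single pendant on an interior vertex of the path of a $2k$-comet on $n-1$ vertices; and $(2,0)$ makes $e-d=2$, so $T_u$ becomes a tree of height at most $2$ rooted at $u$ and attached to the end of a path on $2k+1$ vertices. These are precisely the three listed families.

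For sufficiency I would compute $C(T)$ and $CT(T)$ for each family from Propositions~\ref{prop:mid_c} and~\ref{prop:mid_ct}. In every case the path from $w$ through $v$ to the hub (or root) is a longest path, possibly tied, and its midpoint places a center vertex at $v$; the count ``$|V(T_u)|=n-m>n/2$'' gives $u\in CT(T)$ with strict inequality across the edge $up_1$, so the centroid is $\{u\}$ (or $\{u,c\}$ with $c$ interior to $T_u$), and in either case the center vertex closest to the centroid is at distance $d$ from $u$. Hence $d(C(T),CT(T))=d(v,u)$ attains the claimed maximum, realizing equality in~\eqref{bound: c_ct}. A two-vertex center arises only in the degenerate height-$1$ subcase of the third family, such as the $(2k+1)$-comet, which does meet the bound; I therefore read ``height $2$ tree'' as ``height at most $2$'' so that this case is included.

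I expect the $n\equiv1,2\pmod4$ analysis to be the main obstacle, on two fronts. On the necessity side one must show that the single unit of slack $s=1$ can be absorbed only as one interior pendant and cannot, say, be split off while simultaneously deepening $T_u$, which is where the interplay between the radius bound $e-d$ and the ceiling $m<n/2$ collapses a potential continuum down to exactly three families. On the sufficiency side the third family is not one tree but ranges over all height-$\le2$ rooted trees $H$ of the prescribed order, so I must confirm that no choice of $H$ --- however deep or heavy --- drags the center off $v$ or the centroid strictly nearer to $v$ than $u$; the worst cases (a single heavy child of $u$, giving a two-vertex centroid, or a balanced height-$2$ bush) should be dispatched directly through the neighbor-by-neighbor criteria of Propositions~\ref{prop:mid_ct} and~\ref{prop:mid_c}.
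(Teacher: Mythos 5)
Your proposal is correct and takes essentially the same approach as the paper: both arguments track tightness in the chain $\left\lfloor\frac{n-1}{2}\right\rfloor \geq n-|V(T_u)| \geq d(u,v)+d(v,w) \geq 2d(u,v)+1$ from the proof of Theorem~\ref{theo:c_ct}, and your parameters $(e-d,s)$ encode exactly which of these inequalities is strict, yielding the same single case for $n\equiv 0,3 \pmod 4$ and the same three families for $n\equiv 1,2\pmod 4$. Your reading of ``height 2 tree'' as ``height at most 2'' is justified and in fact repairs a small imprecision present in the paper's own proof, where strictness of the last inequality is claimed to force height exactly $2$ although it only forces height at most $2$ (and indeed the $(2k+1)$-comet, whose $T_u$ has height $1$, also achieves equality).
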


\begin{proof}
As in the proof of Theorem~\ref{theo:c_ct}, let $v \in C(T)$ and $u \in CT(T)$ such that the graph distance between $u$ and $v$ is precisely $d( C(T), CT(T))$. Above, we obtained $d(u,v)\leq \left\lfloor \frac{n-3}{4}\right\rfloor$ from 
\begin{align}
\left\lfloor\frac{n-1}{2}\right\rfloor & \geq  n - |V(T_u)| \label{C_CT_ineq1} \\
& \geq d(u,v) + d(v,w) \label{C_CT_ineq2} \\
& \geq 2d(u,v) + 1 . \label{C_CT_ineq3}
\end{align}

If $\left\lfloor\frac{n-1}{2}\right\rfloor$ is odd, then $d(u,v) = \left\lfloor \frac{n-3}{4}\right\rfloor$ when all of the inequalities above are tight. Equality in \eqref{C_CT_ineq2} implies that all vertices not in $T_u$ form a path $P(u,w)$. Equality in \eqref{C_CT_ineq3} implies that the height of $T_u$ is 1 because the neighbor of $v$ on $P(u,v)$ is not in $C(T)$. This, together with equality in \eqref{C_CT_ineq1} characterizes the $\left\lfloor\frac{n+1}{2}\right\rfloor$-comet. In particular, when $n=4k$, this is the $2k$-comet, and when $n=4k+3$, this is the $2k+2$-comet. 

On the other hand, notice that $2d(u,v) + 1$ is odd. So when $\left\lfloor\frac{n-1}{2}\right\rfloor$ is even, all inequalities cannot be equalities. In particular, exactly one will be strict. If \eqref{C_CT_ineq1} is the one which is strict, then we have the $\left\lfloor\frac{n-1}{2}\right\rfloor$-comet. If \eqref{C_CT_ineq2} is the one which is strict, then we have a $\left\lfloor\frac{n+1}{2}\right\rfloor$-comet on $n-1$ vertices with one extra vertex pendant to one of the degree 2 vertices. If \eqref{C_CT_ineq3} is the one which is not strict, then $T_u$ has height 2 but all $\left\lfloor\frac{n-1}{2}\right\rfloor$ vertices not in $T_u$ still lie on the path $P(u,w)$.
\end{proof}

\subsection{Between centroid and subtree core}

Next we turn our attention to the centroid and the subtree core. 
\label{sec:ct_core}

\begin{thm}
Let $T$ be a tree with $n>8$ vertices. 
If 
$n\geq 2^{\left\lceil  \log_2 n \right\rceil -1}+\left\lceil  \log_2 n \right\rceil$, 
then \[d(CT(T), Core(T)) \leq \left\lfloor  \frac{n-1}{2} \right\rfloor - \left\lfloor  \log_2 n \right\rfloor -1 \]
with equality holding if and only if $T$ is the $(n-  \left \lfloor  \log_2 n \right \rfloor -1) $-comet.
Otherwise 
\[d(CT(T), Core(T)) \leq \left\lfloor  \frac{n-1}{2} \right\rfloor - \left\lfloor  \log_2 n \right\rfloor .\]
with equality precisely when $T$ is the $ (n-  \left \lfloor  \log_2 n \right \rfloor )$-comet.
\label{theo:ct_core1} 
\end{thm}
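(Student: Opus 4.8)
The plan is to localize a single extremal pair and read off the two constraints — one coming from the centroid, one from the core — that pull the distance in opposite directions. Concretely, I would fix $u\in CT(T)$ and $w\in Core(T)$ with $d(u,w)=d(CT(T),Core(T))=:\delta$, and assume $\delta\ge 1$ (otherwise there is nothing to prove). Let $u=x_0,x_1,\dots,x_\delta=w$ be the connecting path. By minimality of $\delta$, no interior vertex $x_1,\dots,x_{\delta-1}$ lies in $CT(T)$ or $Core(T)$; in particular $x_1\notin CT(T)$ and $x_{\delta-1}\notin Core(T)$. I split $T$ along the last edge $x_{\delta-1}w$ into the component $A$ containing $w$ and the component $B$ containing $x_{\delta-1}$, writing $a=|V(A)|$, so $|V(B)|=n-a$.

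The first key step is the core constraint. Proposition~\ref{prop:mid_core} applied to $w$ and its neighbor $x_{\delta-1}$ gives $F_A(w)\ge F_B(x_{\delta-1})$, and the inequality must be \emph{strict}: equality would place $x_{\delta-1}$ in $Core(T)$, contradicting $\delta=d(CT(T),Core(T))$. Now I bound the two sides using Claim~\ref{claim:min_max_Core}: rooting $A$ at $w$ gives $F_A(w)\le 2^{a-1}$, while rooting $B$ at $x_{\delta-1}$ gives $F_B(x_{\delta-1})\ge |V(B)|=n-a$. Chaining these yields $2^{a-1}>n-a$. Solving this for the least admissible $a$ in terms of $L:=\lfloor\log_2 n\rfloor$ produces exactly the dichotomy in the statement: one always gets $a\ge L+1$, with $a\ge L+2$ forced precisely when $n$ is large enough that $a=L+1$ fails, i.e. when $n\ge 2^{L}+L+1$ (equivalently, the condition written with $\lceil\log_2 n\rceil$). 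This is the source of the extra $-1$ in the first displayed bound.

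The second key step is the centroid constraint. Proposition~\ref{prop:mid_ct} applied to $u$ and $x_1$ bounds the size of the component of $T-ux_1$ on the $w$-side; since $x_1\notin CT(T)$ the inequality is strict, so this component has at most $\lfloor (n-1)/2\rfloor$ vertices. On the other hand this component contains the $\delta-1$ interior path vertices together with all of $A$, hence has at least $(\delta-1)+a$ vertices. Combining,
\[
\delta\le \left\lfloor\frac{n-1}{2}\right\rfloor-a+1 .
\]
Substituting the lower bound on $a$ from the core step gives the two displayed upper bounds, with the case split governed by whether $n\ge 2^{L}+L+1$.

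For the equality characterization I would trace back which inequalities are tight. Forcing $\delta$ to meet the bound requires simultaneously that $a$ is minimal, that the centroid inequality is tight (so the $w$-side component is exactly the path together with $A$, with no extra branches hanging off $x_1,\dots,x_{\delta-1}$), and that the core chain is essentially tight. Tightness of $F_A(w)\le 2^{a-1}$ forces $A$ to be a star centered at $w$ (the equality case of Claim~\ref{claim:min_max_Core}), while tightness of $F_B(x_{\delta-1})\ge n-a$ pushes $B$ toward a path rooted at $x_{\delta-1}$; together with the balance condition pinning the centroid's location, this assembles the claimed comet. I expect this equality analysis to be the main obstacle: the core inequality only needs to hold strictly, not tightly, so when $2^{a-1}$ exceeds $n-a$ with room to spare one must argue carefully that no ``star at $w$ plus bushy tail'' configuration can also realize the extremal distance, and the borderline values of $n$ (where $2^{a-1}+a$ just crosses $n$, such as powers of $2$) need separate bookkeeping to land in the correct case. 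Finally, I would confirm the forward direction by computing $CT$ and $Core$ of the relevant $(n-L-1)$- or $(n-L)$-comet directly and checking that the distance between them equals the stated bound.
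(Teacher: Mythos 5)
Your derivation of the two displayed bounds is essentially the paper's own argument. The paper likewise fixes an extremal pair, splits off the component $T_v$ at the core end and $T_u$ at the centroid end, uses Proposition~\ref{prop:mid_core} together with Claim~\ref{claim:min_max_Core} to get $2^{y-1}>n-y$ (your $2^{a-1}>n-a$), uses Proposition~\ref{prop:mid_ct} to get $|V(T_u)|\geq\left\lceil (n+1)/2\right\rceil$, and combines these with the vertex count $d(u,v)\leq n-|V(T_u)|-|V(T_v)|+1$. Your single-edge splits produce exactly the same components and the same three inequalities, so the inequality half of your proposal is correct and is not a different route.

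The two worries you append at the end are both genuine, and they are precisely the points where the paper's one-sentence treatment of the equality case is inadequate; you cannot ``argue carefully'' around them, because the statement as written fails at both points. First, the dichotomy your argument actually produces is $n\geq 2^{L}+L+1$ with $L=\lfloor\log_2 n\rfloor$; this is \emph{not} equivalent to the theorem's $\lceil\log_2 n\rceil$ condition when $n$ is a power of $2$: for $n=64$ the theorem's hypothesis holds and claims the bound $31-6-1=24$, yet the $58$-comet (path $p_1,\dots,p_{58}$ with six pendant leaves at $p_{58}$) has $CT=\{p_{32},p_{33}\}$, $Core=\{p_{58}\}$, and distance $25$, which lands in the theorem's \emph{second} branch --- the one your form of the condition correctly assigns to $n=64$. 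Second, the ``only if'' direction of the equality characterization cannot be proved by tightness-tracing because it is false: since the core inequality only needs to hold \emph{strictly}, minimality of $y$ does not force $T_v$ to be a star (nor $T-T_v$ to be a path). Take $n=100$ (bound $49-6-1=42$): let $T$ be the path $q_{50},\dots,q_1,u,x_1,\dots,x_{41},v$ with five pendant leaves and one pendant two-vertex path attached at $v$. Then $F_{T_v}(v)=2^5\cdot 3=96>92=F_{T-T_v}(x_{41})$, so $Core(T)=\{v\}$ by Proposition~\ref{prop:mid_core}, while $CT(T)=\{u,q_1\}$ by Proposition~\ref{prop:mid_ct}, and $d(CT(T),Core(T))=42$ --- equality, but $T$ is not the $93$-comet. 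What equality genuinely forces is only: no branching off the connecting path, $|V(T_u)|=\left\lceil (n+1)/2\right\rceil$, $|V(T_v)|$ minimal, and the strict core inequality at $v$; this admits a family of extremal trees of which the comet is one member. So your instinct that the equality analysis is ``the main obstacle'' is exactly right --- it is an obstacle the paper's proof does not overcome either.
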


\begin{proof}
Fix a tree $T$ with $n>8$ vertices. Let $u \in CT(T)$ and $v \in Core(T)$ where the graph distance between $u$ and $v$ is precisely $d(CT(T), Core(T))$. We assume $d(u,v)\geq 1$, otherwise we have nothing to prove.
Let $P(u,v)$ denote the path connecting $u$ and $v$ and let $T_u$, $T_v$ denote the components containing $u$, $v$ respectively in $T-E(P(u,v))$. Let $T-T_v$ be the component containing $v$ when the edges of $T_v$ are deleted from $T$. Set $x:=|V(T_u)| $ and $y:=|V(T_v)|$. 
First observe
 \[d(u,v) \leq n - x - y + 1.\]

Since $u\in CT(T)$ and the neighbor of $u$ on $P(u,v)$ is not in $CT(T)$, Proposition~\ref{prop:mid_ct} implies 
$ x > n-x $, thus $x>\frac{n}{2}$. Since $x$ is an integer, $x\geq \frac{n+1}{2}$ and consequently 
$$ x \geq \left \lceil \frac{n+1}{2} \right \rceil . $$

Similarly, since $v\in Core(T)$ and the neighbor of $v$ on $P(u,v)$ is not in $Core(T)$, Proposition~\ref{prop:mid_core} gives
$$ F_{T_v} (v) > F_{T-T_v}(w) $$
where $w$ is the unique neighbor of $v$ on $P(u,v)$. See Figure~\ref{fig:ct_core1} for an illustration of how these pieces interact.

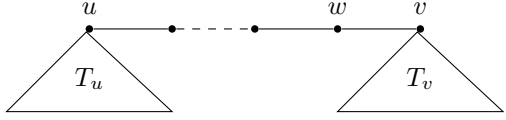
\begin{figure}[htbp]
\centering
\begin{tabular}{c}
    \begin{tikzpicture}[scale=1.1]
        \node[fill=black,circle,inner sep=1pt] (t2) at (1,1) {};
        \node[fill=black,circle,inner sep=1pt] (t3) at (2,1) {};
        \node[fill=black,circle,inner sep=1pt] (t4) at (3,1) {};
        \node[fill=black,circle,inner sep=1pt] (t5) at (4,1) {};
        \node[fill=black,circle,inner sep=1pt] (t6) at (5,1) {};

        \draw (t5)--(t6);
        \draw (t4)--(t5);
        \draw [dashed] (t3)--(t4);
        \draw (t2)--(t3);
        
        \draw (t2)--(0,0)--(2,0)--cycle;
        \draw (t6)--(4,0)--(6,0)--cycle;

        \node at (5,1.25) {$v$};
        \node at (4,1.25) {$w$};
        \node at (1,1.25) {$u$};
        
        \node at (1,.4) {$T_u$};
        \node at (5,.4) {$T_v$};
       
       \node at (-1.5,0) {};
       \node at (7.5,1) {};
        
        \end{tikzpicture}
        \end{tabular}
\caption{A representation of tree $T$ for the proof of Theorem~\ref{theo:ct_core1} with path $P(u,v)$, $T_u$, $T_v$, and $w$ labeled.}\label{fig:ct_core1}
\end{figure}

Further note that every subtree in $T_v$ which contains $v$ can be uniquely identified by the set of its vertices, excluding $v$. Thus, by Claim~\ref{claim:min_max_Core},
\[ F_{T_v}(v) \leq 2^{y-1}. \]
Note that equality holds if and only if every subset of vertices induces a tree which is the case exactly when $T_v$ is a star centered at $v$. On the other hand, 
\[ F_{T-T_v}(w) \geq n-y \]
with equality if and only if $T-T_v$ is a path with $w$ as an end vertex. 

Putting these inequalities together for our specific choice of $v$, Proposition~\ref{prop:mid_core} yields 
\[2^{y-1} > n-y.\]
As $y+2^{y-1}$ is an increasing function, there is a unique, real-valued, $y_0>0$ such that $2^{y_0-1}=n-y_0$. Further, for all $0<y<y_0$, $2^{y-1}< n-y$, and for all $y>y_0$, $2^{y-1}> n-y.$

Now let $y_0>0$, be the unique real value such that $2^{y_0-1} = n-y_0$. Then
\begin{eqnarray*}
y_0 & =& \log_2(n-y_0) + 1\\
 & < & \log_2 (n) +1.
\end{eqnarray*}
Substituting the equation $y_0 = \log_2(n-y_0) + 1$ into itself, for $n>8$ we find
\begin{eqnarray*}
y_0&=&  \log_2(n- \log_2(n-y_0) - 1) + 1 \\
 &\geq & \log_2(n- \log_2(n) - 1) + 1\\
 &= & \log_2 2(n- \log_2(n) - 1)\\
 &=& \log_2 (n+ (n-2-2\log_2 n))\\
 &>& \log_2(n).
\end{eqnarray*}
In the last inequality, $n-2-2\log_2 n>0$ holds for $n>8$. 

As a result, we have the bounds
\[ \log_2(n) < y_0 < \log_2(n)+1.\]
Further, 
if  $y_0 < \left\lfloor \log_2 n \right\rfloor +1$, then for an integer $y>0$, $2^y-1 > n-y$ precisely when $y\geq \left\lfloor \log_2 n \right\rfloor +1$. 
However, if $y_0 \geq \left\lfloor \log_2 n \right\rfloor +1$ then for an integer $y>0$, $2^y-1 > n-y$ precisely when $y\geq \left\lfloor \log_2 n \right\rfloor+2$.

When $n>8$, our bounds for integers $x$ and $y$ give
\begin{align*}
d(u,v) 
&\leq n-x- y + 1 \\
&\leq  n- \left\lceil \frac{n+1}{2} \right \rceil - \left\lfloor  \log_2 n  \right\rfloor \\
&= \left \lfloor \frac{n-1}{2} \right \rfloor - \left\lfloor  \log_2 n  \right\rfloor .
\end{align*} 
As mentioned earlier, this can be strengthen to $d(u,v) \leq  \left \lfloor \frac{n-1}{2} \right \rfloor - \left\lfloor  \log_2 n  \right\rfloor-1$ if $y_0 \geq \left\lfloor \log_2 n \right\rfloor + 1$. However, this will only happen if $2^{\left\lfloor \log_2 n \right\rfloor} \leq n-\left\lfloor \log_2 n \right\rfloor -1$ as stated in the theorem.

As for extremal trees, equality will hold in the upper bound for $d(u,v)$ exactly when $T_u$ has  $\left \lceil \frac{n+1}{2} \right \rceil$ vertices, $T_v$ is a star, and $T-T_v$ is a path. This describes the $C$-comet where $C= n-  \left \lfloor  \log_2 n \right \rfloor$ or in the case where $n\geq 2^{\left\lfloor  \log_2 n \right\rfloor}+\left\lceil  \log_2 n \right\rceil+1$, $C=n-  \left \lfloor  \log_2 n \right \rfloor -1$.
\end{proof}

\subsection{Between subtree core and center}

The study of this case is similar to that in the previous section. We omit some details.

\begin{thm}\label{theo:core_ct1}
For any tree $T$ on $n>8$ vertices, 
if $n\geq 2^{\left\lceil  \log_2 n \right\rceil -1}+\left\lceil  \log_2 n \right\rceil$ then 
\[ d(C(T), Core(T)) \leq \left\lfloor\frac{1}{2}(n-\left\lfloor  \log_2 n \right\rfloor-2) \right\rfloor\]
which is tight for the $K$-comet with $K=n-\left\lfloor  \log_2 n \right\rfloor +1$.
Otherwise
\[ d(C(T), Core(T)) \leq \left\lfloor\frac{1}{2}(n-\left\lfloor  \log_2 n \right\rfloor-1) \right\rfloor\]
which is tight for the $K$-comet with $K= n-\left\lfloor  \log_2 n \right\rfloor  $.
\end{thm}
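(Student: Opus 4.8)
The plan is to run the argument of Theorem~\ref{theo:ct_core1} almost verbatim, with the centroid endpoint replaced by a center endpoint and its defining inequality replaced by the eccentricity condition of Proposition~\ref{prop:mid_c}. First I would pick $v\in C(T)$ and $u\in Core(T)$ with $d(u,v)=d(C(T),Core(T))\ge 1$, and decompose $T$ along $P(u,v)$; let $T_u$ be the component of the core vertex $u$ in $T-E(P(u,v))$, let $w$ be the neighbor of $u$ on $P(u,v)$, and set $y=|V(T_u)|$. The core side is then handled exactly as before: since $u\in Core(T)$ while $w\notin Core(T)$, Proposition~\ref{prop:mid_core} gives $F_{T_u}(u)>F_{T-T_u}(w)$, and Claim~\ref{claim:min_max_Core} supplies $F_{T_u}(u)\le 2^{y-1}$ together with $F_{T-T_u}(w)\ge n-y$, so that $2^{y-1}>n-y$. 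The monotonicity analysis of the threshold $y_0$ from Theorem~\ref{theo:ct_core1} applies without change: one gets $y\ge\lfloor\log_2 n\rfloor+1$ in general, and $y\ge\lfloor\log_2 n\rfloor+2$ precisely when $y_0\ge\lfloor\log_2 n\rfloor+1$, which is the regime $n\ge 2^{\lceil\log_2 n\rceil-1}+\lceil\log_2 n\rceil$ appearing in the statement.

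The genuinely new ingredient is the center side. Because $v\in C(T)$ while the neighbor of $v$ on $P(u,v)$ is not in $C(T)$, Proposition~\ref{prop:mid_c} and the corollary following it should produce a leaf $z$ with $P(v,z)\cap P(u,v)=\{v\}$ and $d(v,z)=ecc_T(v)$, i.e.\ an eccentricity-realizing branch pointing \emph{away} from $u$. Since $n>8$ already forces $y\ge\lfloor\log_2 n\rfloor+1\ge 2$, the core vertex $u$ is not a leaf, so $T_u$ contains a leaf at distance at least $1$ from $u$, whence $ecc_T(v)\ge d(u,v)+1$. The three vertex sets $V(T_u)$, the vertices of $P(u,v)$ other than $u$, and the vertices of $P(v,z)$ other than $v$ are pairwise disjoint, which gives the counting inequality
\[ n \;\ge\; y + d(u,v) + ecc_T(v) \;\ge\; y + 2\,d(u,v) + 1, \]
so that $d(u,v)\le\tfrac12(n-y-1)$. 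Substituting $y\ge\lfloor\log_2 n\rfloor+1$ (respectively $y\ge\lfloor\log_2 n\rfloor+2$) and taking floors yields an upper bound of the stated shape in each of the two regimes, the stronger one holding exactly when $n\ge 2^{\lceil\log_2 n\rceil-1}+\lceil\log_2 n\rceil$.

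For the extremal direction I would exhibit the relevant comet and locate its middle parts directly: in a $K$-comet the core is the hub provided the number of pendants $p=n-K$ satisfies $2^{p}>n-p-1$ (this is exactly where the two regimes separate), while the center sits at the midpoint of the unique longest path, a pendant-to-tail path of length $K$. A short evaluation of $F$ along the path pins down $Core(T)$, and the usual center-of-a-path computation pins down $C(T)$; equating $d(C(T),Core(T))$ with $\lfloor\tfrac12(n-y-1)\rfloor$ should recover the claimed comet. I expect the main obstacle to be precisely this bookkeeping: checking that the eccentricity-realizing branch really points away from $u$ in \emph{every} sub-case of the corollary (singleton versus two-vertex center), and then matching the floor expressions, the threshold, and the extremal comet to their stated forms. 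This is the same parity-sensitive accounting that produced the two cases in Theorem~\ref{theo:ct_core1}, and it must be verified carefully against the parity of $K$, since that is what decides both the exact constant inside the floor and which comet attains equality.
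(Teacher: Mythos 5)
Your upper-bound skeleton is exactly the paper's proof: the same extremal pair $u\in Core(T)$, $v\in C(T)$, the same decomposition along $P(u,v)$, the inequality $2^{y-1}>n-y$ from Proposition~\ref{prop:mid_core} plus Claim~\ref{claim:min_max_Core}, the eccentricity-realizing leaf in $T_v$ from Proposition~\ref{prop:mid_c}, and the count $2d(u,v)+1\le d(u,v)+ecc_T(v)\le n-y$. That much is sound and matches the paper line for line.

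Two things go wrong afterwards, and both are concrete. First, your identification of the threshold with the stated regime fails at powers of two: if $n=2^k$ then $\lceil\log_2 n\rceil=\lfloor\log_2 n\rfloor$, so $n\ge 2^{\lceil\log_2 n\rceil-1}+\lceil\log_2 n\rceil$ holds trivially, while $y_0<\log_2 n+1$ always, so $y_0\ge\lfloor\log_2 n\rfloor+1$ is false and only $y\ge\lfloor\log_2 n\rfloor+1$ is forced. For $n=16$ your regime bound would be $\lfloor\tfrac12(16-4-3)\rfloor=4$, yet the $12$-comet of order $16$ (four pendants at the hub $x_1$, path $x_1x_2\cdots x_{12}$) has $Core=\{x_1\}$ and $C=\{x_6\}$, distance $5$; so your claimed sharper regime bound is actually false there. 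Second, and more consequentially, for $n$ not a power of two your (correct) estimates $y\ge\lfloor\log_2 n\rfloor+1$, resp.\ $y\ge\lfloor\log_2 n\rfloor+2$, give $d(u,v)\le\lfloor\tfrac12(n-\lfloor\log_2 n\rfloor-2)\rfloor$, resp.\ $\lfloor\tfrac12(n-\lfloor\log_2 n\rfloor-3)\rfloor$ --- each one unit \emph{stronger} than the stated bounds, not ``of the stated shape.'' (The paper's own proof lands on the stated bounds only by using the weaker estimates $y\ge\lfloor\log_2 n\rfloor$, resp.\ $y\ge\lfloor\log_2 n\rfloor+1$, an off-by-one inconsistency with the $y_0$-analysis in Theorem~\ref{theo:ct_core1} that you copied faithfully.) The stated inequalities do follow a fortiori, but then the stated tightness cannot hold, so your plan to ``recover the claimed comet'' by bookkeeping is doomed rather than merely delicate: for $n=13$ (regime case, stated bound $4$) the stated comet has $K=11$, i.e.\ two pendants; its hub is not even in the core since $2^2<10$, and one computes $Core=\{x_4,x_5\}$, $C=\{x_5,x_6\}$, distance $0$, while the true extremum is the $9$-comet at distance $3$, matching your sharper bound. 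Likewise for $n=10$ the stated bound $3$ is attained by no tree at all. So the gap in your proposal is twofold: the power-of-two slip above, and the fact that the extremal verification you deferred is not bookkeeping but an actual obstruction --- a completed version of your argument proves sharper inequalities and refutes the tightness claims of the statement, and it must say so explicitly.
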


\begin{proof}
Let $u \in Core(T)$ and $v \in C(T)$ in a tree $T$ with $|V(T)|=n$ and the graph distance between $u$ and $v$ is precisely $d(C(T), Core(T))$. 
Use $T_u$ (respectively $T_v$) to denote the component containing $u$ ($v$)  in $T-E(P(u,v))$ and let $y=|V(T_u)|$.
 
Because $v\in C(T)$ and the neighbor of $v$ on $P(u,v)$ is not in $C(T)$, there is a leaf $w$ in $T_v$ with $d(v,w) = ecc_T(v)$. 
As argued in the proof of Theorem~\ref{theo:c_ct},
$$ d(u,v) \leq ecc_T(v) - 1 < d(v,w),$$ 
$$ 2 d(u,v)+1 \leq d(u,v) + d(v,w) \leq n-y.$$
Note that these inequalities are tight for the $(n-y+1)$-comet. 

Because $u\in Core(T)$, we can conclude, as in the proof of Theorem~\ref{theo:ct_core1}, 
$$ 2^{y-1} > n - y.$$
Consequently, 
$$y\geq  \left\lfloor  \log_2 n \right\rfloor.$$ 
Combining inequalities, we obtain the bound in the theorem statement:
\[d(u,v) \leq \left\lfloor \frac{1}{2}(n-y-1)\right\rfloor \leq \left\lfloor\frac{1}{2}(n-\left\lfloor  \log_2 n \right\rfloor-1) \right\rfloor.\]

Recall from Theorem~\ref{theo:ct_core1} that if 
$n\geq 2^{\left\lceil  \log_2 n \right\rceil -1}+\left\lceil  \log_2 n \right\rceil$,
then  
$$y\geq  \left\lfloor  \log_2 n \right\rfloor +1$$
and consequently we obtain the slightly better bound
\[d(u,v) \leq \left\lfloor \frac{1}{2}(n-y-1)\right\rfloor \leq \left\lfloor\frac{1}{2}(n-\left\lfloor  \log_2 n \right\rfloor-2) \right\rfloor.\]
\end{proof}

\section{Trees with degree restrictions}
\label{deg tree}

In Section~\ref{dist middle parts}, we saw that, for each pair of middle parts, the maximum distance was achieved precisely by an appropriate comet. However, the comet has a vertex of large degree. In this section, we restrict the maximum degree of the tree and ask how the extremal structures change. We will begin with a discussion of binary trees and then broaden our scope to trees with maximum degree $k>3$. 

First we state some results about the maximum or minimum number of root-containing subtrees in a tree with a specified degree sequence which will be needed later.
Note that among trees (with no maximum degree condition) with $n$ vertices, the number of root-containing subtrees is minimized by the path, rooted at one end, and maximized by the star, rooted at the center vertex (Claim~\ref{claim:min_max_Core}). 

\subsection{Trees with a given degree sequence}

For a rooted tree, the {\it height} of a vertex is the distance to the root. The {\it height of the tree}, $h(T)$, is the maximum of all vertex heights. 

\begin{defn}\label{def:leveldegree} In a rooted tree $T$, the list of multisets $(L_0, L_1, \ldots, L_{h(T)})$, where $L_i$ consists of the degrees of the vertices at height $i$ (and $L_0$ consists of the degree of the root vertex), is called the {\it level-degree sequence} of the rooted tree.
\end{defn}

Let  $|L_i|$ be the number of entries in $L_i$ counted with multiplicity. It is easy to see that a list of multisets is the level degree sequence of a rooted tree if and only if (i) the multiset $\bigcup_i L_i$ is a degree sequence for a tree,  (ii) $|L_0|=1$, and (iii)
$\sum_{d\in L_0} d=|L_{1}|,$ while  $\sum_{d\in L_i} (d-1)=|L_{i+1}|$ for all $i\geq 1$. 

In a rooted tree, the \textit{down-degree} of the root is equal to its degree. The down degree of any other vertex is one less than its degree.

\begin{defn}\cite{nina}\label{def:greedy2} 
Given the level-degree sequence of a rooted tree, the {\it level-greedy rooted tree} for this level-degree sequence is built as follows: 
(i) For each $i\in [n]$, place $|L_i|$ vertices  in level $i$ and to each vertex, from left to right, assign a degree from $L_i$ in non-increasing order. (ii) For $i\in [n-1]$, from left to right, join the next vertex in $L_i$ whose down-degree is $d$ to the first $d$ so far unconnected vertices on level $L_{i+1}$. Repeat for $i+1$.
\end{defn}

\begin{defn}\cite{wang_dm}\label{def_greedy}
Given a tree degree sequence $(d_1, d_2, \ldots, d_n)$ in non-increasing order, the {\it greedy tree} for this degree sequence is the level-greedy tree for the level-degree sequence that has $L_0=\{d_1\}$, $L_1=\{d_2, \ldots, d_{d_1+1}\}$ and for each $i>1$, \[|L_i|=\sum_{d\in L_{i-1}} (d-1)\] with every entry in $L_{i}$ at most as large as every entry in $L_{i-1}$.
\end{defn}

The greedy tree frequently occurs in the study of extremal structures. A similar structure with modified root degree is crucial to our study here.
Fix a degree sequence for a tree and distinguish a single value in this sequence which will be the degree of the root. Similar to the greedy tree, we define the {\em rooted greedy tree}. 
\begin{defn}
Let  $\overline{d}=(d_1, d_2, \ldots, d_n)$ be a tree degree sequence in non-increasing order with degree $d_i$ identified as the root degree. Let \[\hat{d}=(d_1', d_2', \ldots, d_{n-1}')=(d_1, d_2, \ldots, \hat{d_i}, \ldots, d_n)\] be the sequence $\overline{d}$ with $d_i$ removed. The \textit{rooted greedy tree} for the degree sequence $\overline{d}$ is the level-greedy tree for the level-degree sequence that has $L_0=\{d_i\}$, $L_1=\{d_1', \ldots, d_{d_i}'\}$ and, for each $i>1$, $|L_j|=\sum_{d\in L_{j-1}} (d-1),$ where entries of $L_j$ are the next available elements from $\hat{d}$.
\end{defn}


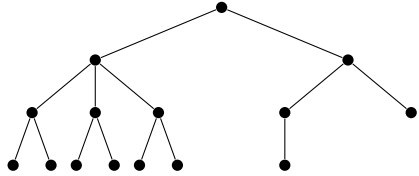
\begin{figure}[htbp]
\centering
\begin{tabular}{c}
    \begin{tikzpicture}[scale=0.7, x=1.2cm, y=.5cm]
        \node[fill=black,circle,inner sep=1.5pt] (v) at (10,6) {}; 
        \node[fill=black,circle,inner sep=1.5pt] (v1) at (4+4,4) {};

        \node[fill=black,circle,inner sep=1.5pt] (v4) at (16-4,4) {};
        \node[fill=black,circle,inner sep=1.5pt] (v11) at (3+4,2) {};
        \node[fill=black,circle,inner sep=1.5pt] (v12) at (4+4,2) {};
        \node[fill=black,circle,inner sep=1.5pt] (v13) at (5+4,2) {};
        \node[fill=black,circle,inner sep=1.5pt] (v41) at (15-4,2) {};        
        \node[fill=black,circle,inner sep=1.5pt] (v42) at (17-4,2) {};         

        \node[fill=black,circle,inner sep=1.5pt] (v111) at (2.7+4,0) {};
        \node[fill=black,circle,inner sep=1.5pt] (v112) at (3.3+4,0) {};
        \node[fill=black,circle,inner sep=1.5pt] (v121) at (3.7+4,0) {};
        \node[fill=black,circle,inner sep=1.5pt] (v122) at (4.3+4,0) {};
        \node[fill=black,circle,inner sep=1.5pt] (v131) at (4.7+4,0) {};
        \node[fill=black,circle,inner sep=1.5pt] (v132) at (5.3+4,0) {};
          
        \node[fill=black,circle,inner sep=1.5pt] (v411) at (15-4,0) {};

        \draw (v)--(v1);
        
        \draw (v)--(v4);
        \draw (v1)--(v11);
        \draw (v1)--(v12);
        \draw (v1)--(v13);
        
        \draw (v4)--(v41);
        \draw (v41)--(v411);
        \draw (v4)--(v42);
        \draw (v11)--(v111);
        \draw (v11)--(v112);
        \draw (v12)--(v121);
        \draw (v12)--(v122);
        \draw (v13)--(v131);
        \draw (v13)--(v132);




    \end{tikzpicture}
\end{tabular}
\caption[A rooted greedy tree with given degree sequence and root degree 2.]{A rooted greedy tree with root degree 2 and degree sequence $(4,3,3,3,3,2,2,1,\ldots,1)$.}
\label{rgreedy_pic}
\end{figure}

Among trees with given degree sequence, greedy trees are extremal with respect to many graph invariants. For example, the following result is for root-containing subtrees.

\begin{thm}[Andriantiana, Wagner, Wang \cite{eric}]
\label{theo:eric}
Fix a degree sequence $\overline{d}$ and a positive integer $k$. Among rooted trees with degree sequence $\overline{d}$, the greedy tree maximizes the number of subtrees with exactly $k$ vertices which contain the root. Consequently the greedy tree maximizes the total number of root-containing subtrees.

Fix a degree sequence, distinguish one value in the sequence as the root degree, and fix a positive integer $k'$. Among rooted trees with this degree sequence and the specified root degree, the corresponding rooted greedy tree maximizes the number of subtrees containing the root and which have $k'$ vertices. Consequently, the rooted greedy tree maximizes the total number of root-containing subtrees.
\end{thm}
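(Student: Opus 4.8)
Since Theorem~\ref{theo:eric} is quoted from \cite{eric}, I will sketch the generating-function exchange argument that establishes it. The plan is to track, for each rooted tree $(T,r)$, the whole \emph{subtree generating polynomial}
\[ \Phi_T(x)=\sum_{k\ge 1}s_k(T)\,x^k, \]
where $s_k(T)$ counts the $k$-vertex subtrees of $T$ that contain $r$. If $T_1,\dots,T_m$ are the branches hanging at the root (so $m=\degree(r)$), then refining the factorization $F_T(r)=\prod_i(1+F_{T_i}(v_i))$ from the proof of Claim~\ref{claim:min_max_Core} so as to record the number of vertices yields the polynomial recursion
\[ \Phi_T(x)=x\prod_{i=1}^{m}\bigl(1+\Phi_{T_i}(x)\bigr), \]
because a root-subtree is obtained by choosing, independently in each branch, either nothing or a root-subtree of that branch, and then adjoining $r$. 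Writing $p\preceq q$ for the relation ``$q-p$ has nonnegative coefficients'', the target is the single coefficient-wise statement that the greedy tree $G$ satisfies $\Phi_T\preceq\Phi_G$ for every $T$ with the same degree sequence; evaluating at $x=1$ then gives the total-count corollary, while reading off individual coefficients gives the per-$k$ claim. The relation $\preceq$ is preserved under sums, under products with nonnegative-coefficient polynomials, and hence, through the recursion, under replacing a branch $T_i$ by a branch $T_i'$ with $\Phi_{T_i}\preceq\Phi_{T_i'}$.

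The engine is a short list of operations that transform an arbitrary rooted tree into $G$ without decreasing any coefficient. The key computation underlies a \emph{rearrangement} move: if two branches with generating functions $x_X\succeq x_Y$ sit at positions whose surrounding contexts contribute nonnegative-coefficient factors $\alpha$ and $\beta$, then swapping the branches so that the heavier one meets the heavier context changes the relevant factor $(1+\alpha(1+x_X))(1+\beta(1+x_Y))$ by exactly
\[ (\alpha-\beta)(x_X-x_Y), \]
which is a product of two $\preceq$-nonnegative polynomials and hence has nonnegative coefficients whenever $\alpha\succeq\beta$. This is a genuinely coefficient-wise inequality, so it is stronger than the convexity argument that would suffice for the $x=1$ total alone. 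The move is degree-preserving because it relocates whole subtrees while leaving every down-degree unchanged. A second, degree-\emph{changing} move lifts a branch from a vertex to its parent; a direct expansion shows this increases $\Phi_T$ by $R(x)\,b(x)\succeq 0$, where $b$ is the lifted branch and $R$ the root context, which encodes ``larger degree nearer the root is better'' and will be used to pin down the optimal root degree.

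With these moves in hand I would argue by induction on the height of the tree. Assume every branch at the root is already the greedy tree for its own degree multiset, so that its generating polynomial is maximal; the relation $\preceq$ then lets me replace each branch by its greedy form. It remains to compare different distributions of the fixed degree multiset across levels and across the root's branches, and to sort them into the left-to-right, largest-near-the-root pattern of Definitions~\ref{def:greedy2} and~\ref{def_greedy}. Each required reshuffle is an instance of the rearrangement move, applied in an order (top levels first, then left to right within a level) that guarantees the context domination $\alpha\succeq\beta$ holds in the direction needed, using the inductive fact that greedy subtrees have dominating generating functions. For the rooted statement the root degree is frozen at the prescribed $d_i$ and the same sorting produces the rooted greedy tree; for the first (unrooted) statement one further invokes the lifting move to show that rooting at a maximum-degree vertex, with the largest degrees pushed toward the root, is optimal, recovering the greedy tree.

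I expect the main difficulty to be the comparison of \emph{contexts}: to apply the rearrangement move in the correct direction one must verify $\alpha\succeq\beta$ for the generating polynomials of the two ambient positions, and these contexts involve both the path back to the root and all the sibling branches, so they are not comparable by inspection. Establishing these dominations requires carefully ordering the sequence of swaps and maintaining the invariant that, at each stage, the partially sorted tree already has greedy subtrees wherever the comparison is invoked; the nested case, in which one of the two positions is an ancestor of the other, needs the lifting identity rather than the plain rearrangement identity and must be folded into the same induction. A secondary obstacle is purely organizational: showing that the finite sequence of non-decreasing moves actually terminates at $G$ rather than stalling at some other configuration, which amounts to checking that $G$ is the unique fixed point of the move set.
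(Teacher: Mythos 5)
The first thing to say is that the paper contains no proof of Theorem~\ref{theo:eric} at all: it is quoted verbatim from Andriantiana, Wagner, and Wang \cite{eric}, so there is no internal argument to compare yours against. Judged on its own terms, your outline does capture the skeleton of the argument used in that line of work: the recursion $\Phi_T(x)=x\prod_i\bigl(1+\Phi_{T_i}(x)\bigr)$ is correct, tracking coefficient-wise domination $\preceq$ rather than the evaluation at $x=1$ is exactly what makes the ``for every $k$ simultaneously'' conclusion available, and your swap identity (the change under exchanging two branches factors as a product of two $\preceq$-differences, by multi-affinity of $\Phi_T$ in the branch polynomials) is a genuine and correctly verified computation.

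However, as a proof the proposal has real gaps, and you have named the largest one yourself without closing it. The entire difficulty of the theorem lives in the context comparisons: to run a swap you must know $\alpha\succeq\beta$, but in a partially sorted tree the two contexts each involve the path to the root \emph{and} all sibling branches, and there is no reason two such contexts are $\preceq$-comparable at all. Your claim that processing ``top levels first, then left to right'' guarantees the needed dominations is precisely the assertion that has to be proved, and it is where \cite{eric} spends its effort (via induction on the order together with carefully formulated exchange lemmas); stating it as a processing order is not a proof, and a naive sorting procedure can genuinely stall at configurations where no two positions have comparable contexts. A second concrete problem is the lifting move: moving a branch from a vertex to its parent raises the parent's degree and lowers the vertex's degree, so it leaves the class of trees with the fixed degree sequence $\overline{d}$. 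You cannot use it to ``pin down the optimal root degree'' in the first statement of the theorem, where the degree multiset is fixed and only the root placement is free; what is needed there is a degree-multiset-preserving operation (e.g., exchanging whole branch systems between the root and a vertex of maximum degree). Finally, the termination claim --- that the move set has the greedy tree as its unique fixed point --- requires a monovariant you have not supplied. So the architecture is right, but the three load-bearing lemmas are missing.
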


\subsection{Binary trees}\label{sub:bin}

The study of binary trees is well motivated from its applications in phylogeny. A {\it binary tree} is a tree in which every vertex has degree 1 or 3. A {\it rooted binary tree} is a rooted tree in which the root has degree 2 and all other vertices have degree 1 or 3. 
 Sz\'ekely and Wang \cite{laszlo} studied the number of subtrees of a binary tree with labeled vertices. They found that the extremal structures are {\em good trees}, {\em rgood trees}, and {\em caterpillars}.
  In our terms, a good binary tree is a greedy tree with root degree 3 and degree sequence 
$$ (3, \ldots , 3, 1, \ldots , 1 ) $$ 
and an rgood binary tree is a rooted greedy tree with root degree 2 and degree sequence
$$ ( 3, \ldots , 3 , 2, 1, \ldots, 1 ) . $$
A binary caterpillar consists of a path $P$ with pendant vertices that make the degree of each internal vertex 3. 

Their results for the number of subtrees are as follows: 
 \begin{thm}[Sz\'ekely, Wang \cite{laszlo2}]
 Among all binary trees with $n$ leaves, the good binary tree minimizes the number of subtrees. 
 \end{thm}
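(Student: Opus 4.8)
The plan is to induct on the number of leaves $n$, combining a cherry-contraction recursion with the greedy-tree optimality recorded in Theorem~\ref{theo:eric}. The first step is the structural observation that every binary tree with $n$ leaves has $2n-2$ vertices and the \emph{same} degree sequence, namely $n-2$ copies of $3$ and $n$ copies of $1$. Hence the problem lives over a single fixed degree sequence, and the good binary tree is exactly the greedy tree for that sequence. Writing $N(T)$ for the total number of subtrees of $T$, this is what makes Theorem~\ref{theo:eric} (and Claim~\ref{claim:min_max_Core}) available as the main engine.

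The recursive engine I would use is the following identity. Choose a cherry of $T$, that is two leaves $x,y$ with a common neighbour $p$, and set $T' = T - \{x,y\}$; then $T'$ is a binary tree with $n-1$ leaves in which $p$ has become a leaf. Sorting the subtrees of $T$ according to whether they contain $x$, $y$, both, or neither gives
\[
N(T) \;=\; N(T') \;+\; 3\,F_{T'}(p) \;+\; 2,
\]
since the subtrees avoiding $\{x,y\}$ are precisely the subtrees of $T'$, the two singletons $\{x\}$ and $\{y\}$ contribute the additive $2$, and every remaining subtree is obtained by adjoining $x$, $y$, or both to a subtree of $T'$ containing the leaf $p$. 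Thus, after peeling off one cherry, the extremal problem for $N$ reduces to the extremal problem for $N(T')$ (the inductive hypothesis) together with control of the leaf quantity $F_{T'}(p)$, which is exactly the kind of root-containing count that Claim~\ref{claim:min_max_Core} and Theorem~\ref{theo:eric} address.

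On top of the recursion I would run an exchange (switching) argument. Given a binary tree that is not the good tree, its level-degree sequence must differ from the level-greedy one, so there are two internal vertices at different heights whose pendant branches are ordered the wrong way; I would swap these branches to make the tree strictly closer to the good tree in a suitable potential, show that each swap moves $N$ in the direction asserted by the theorem, and iterate until the good tree is reached. Uniqueness of the extremal tree would then follow from the strict cases of the inequalities underlying Theorem~\ref{theo:eric} and Claim~\ref{claim:min_max_Core}.

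I expect the sign of a single swap to be the crux. Because Theorem~\ref{theo:eric} is stated for \emph{root-containing} subtrees while $N(T)$ counts \emph{all} subtrees, the per-vertex gains and losses produced by a rebalancing swap do not cancel term by term, and their net effect is invisible unless one compares the full subtree-generating polynomials of the two swapped branches. The heart of the proof is therefore to upgrade the root-containing, per-size optimality of Theorem~\ref{theo:eric} to a statement about freely placed subtrees of each given size, which amounts to a convexity/log-concavity property of those branch polynomials; the remaining floor-function bookkeeping that separates the good tree from its nearest competitors should be routine once this monotonicity is in hand.
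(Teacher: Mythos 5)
You should know at the outset that the paper contains no proof of this statement to compare yours against: it is imported verbatim from \cite{laszlo2}. Moreover, as printed the statement is almost certainly a typo. The cited source is titled ``Binary trees with the largest number of subtrees,'' and the theorem immediately following in the paper (from \cite{laszlo}) asserts that the binary \emph{caterpillar} minimizes the number of subtrees; two non-isomorphic trees cannot both be the minimizer, so the intended claim here is that the good binary tree \emph{maximizes} the number of subtrees. Your write-up never commits to a direction --- you say each swap ``moves $N$ in the direction asserted by the theorem'' --- and this matters: an argument for the literal statement (minimization) cannot succeed because that statement is false, and an argument whose inequalities have no fixed direction cannot be checked.

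As for the substance: the two precise ingredients you give are correct but insufficient, and the ingredient that would suffice is missing. It is true that all binary trees with $n$ leaves share the degree sequence $(3,\ldots,3,1,\ldots,1)$, and your cherry identity $N(T)=N(T')+3F_{T'}(p)+2$ is right. But the induction built on it does not close: the inductive hypothesis controls $N(T')$ alone, while the identity also involves $F_{T'}(p)$ at the specific leaf $p$, and these two quantities are not extremized by the same tree. Concretely, by the second half of Theorem~\ref{theo:eric}, the pair $(T',p)$ maximizing $F_{T'}(p)$ is the rooted greedy tree with root degree $1$; already for $6$ leaves this tree is the binary \emph{caterpillar} rooted at a suitable leaf (it has $56$ leaf-containing subtrees, versus $53$ for the good tree rooted at any of its leaves). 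So the two terms in your recursion pull in opposite directions --- the tree that is best for $F_{T'}(p)$ is the very tree that is worst for $N(T')$ --- and neither term can be bounded by the good tree's values simultaneously with the other. Claim~\ref{claim:min_max_Core} cannot rescue this, since its extremal trees (the star and the path) are not binary. Finally, you correctly identify the real crux --- upgrading the root-containing, size-by-size optimality of Theorem~\ref{theo:eric} to optimality for the total count $N(\cdot)$, via a comparison of the full subtree-generating polynomials of swapped branches --- but you then defer exactly this step as ``routine.'' It is not routine; it is the entire content of the cited theorem, and without it your proposal is a program for a proof rather than a proof.
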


\begin{thm}[Sz\'ekely, Wang \cite{laszlo}]
Among all binary trees with $n$ leaves, the binary caterpillar on $n$ leaves minimizes the number of subtrees.
\end{thm}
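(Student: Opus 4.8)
The plan is to prove minimality by a local ``stretching'' move, iterated until the tree becomes a caterpillar, with each step strictly decreasing the number of subtrees. Throughout I write $N(T)$ for the total number of subtrees of $T$ and use two facts: if $T$ is rooted and $T_v$ denotes the part hanging below $v$ (with children $c_1,\dots,c_k$), then $F_{T_v}(v)=\prod_i\bigl(1+F_{T_{c_i}}(c_i)\bigr)$, and since every subtree has a unique vertex closest to the root, $N(T)=\sum_v F_{T_v}(v)$.

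First I would reformulate the class. In a binary tree with $n$ leaves the degree sequence is forced to be $(3^{\,n-2},1^{\,n})$; deleting all leaves leaves an \emph{internal tree} $T^{\circ}$ on $n-2$ vertices of maximum degree $3$, and $T$ is recovered by attaching $3-\deg_{T^{\circ}}(v)$ leaves at each $v$. Under this correspondence the binary caterpillar is exactly the tree whose internal tree is a path, so $T$ fails to be a caterpillar precisely when $T^{\circ}$ has a vertex of degree $3$, a \emph{branch vertex}. Assuming $T$ is not a caterpillar, I would root $T^{\circ}$ at a branch vertex and take a deepest branch vertex $b$; then at least two branches hanging below $b$ contain no further branch vertex, hence are paths in $T^{\circ}$ and thus caterpillar branches $B_1,B_2$ of $T$, each ending in a cherry. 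The move detaches $B_2$ from $b$, reattaches its root to the tip of $B_1$ (deleting one leaf of that tip cherry), and compensates by hanging one new leaf at $b$. One checks at once that this keeps all degrees in $\{1,3\}$, preserves the number of leaves, and lowers the $T^{\circ}$-degree of $b$ from $3$ to $2$ without creating a new branch vertex; hence the number of branch vertices strictly drops, and iterating reaches the caterpillar in finitely many steps.

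It remains to show the move strictly decreases $N$, which is the heart of the argument. Rooting at $b$, with $B_1,B_2$ and the remaining ``upper'' branch $U$ hanging below, and writing $g_1,g_2,g_U$ for the root-subtree counts $F(\cdot)$ of these branches, $B'$ for the merged branch and $g'$ for its root count, the decomposition above (the contribution of $U$ and the new leaf being $N(U)$ and $1$, and $F_{T_b}(b)=(1+g_1)(1+g_2)(1+g_U)$ versus $2(1+g')(1+g_U)$) gives
\begin{align*}
N(T)-N(T')
&=\bigl[(1+g_1)(1+g_2)-2(1+g')\bigr](1+g_U)\\
&\quad+\bigl[N(B_1)+N(B_2)-N(B')-1\bigr].
\end{align*}
The first bracket is positive while the second is negative, so the inequality is \emph{not} termwise: this is the main obstacle. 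I would control it using the fact that along a caterpillar spine the root count obeys the affine recursion $t\mapsto 2(1+t)$ with tip value $4$, which produces closed forms for $g_1,g_2,g',N(B_1),N(B_2),N(B')$. Combined with $1+g_U\ge 5$ (the neighbor of $b$ inside $U$ is an internal vertex with two children, so $g_U\ge 4$), the positivity of $N(T)-N(T')$ reduces to an elementary inequality in the two branch lengths, which I expect to verify by induction on the length of $B_1$. Since each move strictly decreases $N$ and the caterpillar is the unique branch-vertex-free member of the class, this establishes both minimality and uniqueness of the minimizer.
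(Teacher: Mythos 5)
Your proposal cannot be checked against an internal argument, because the paper does not prove this theorem at all---it is imported from Sz\'ekely and Wang \cite{laszlo}---so I assess it on its own merits: the approach is sound, and the one step you deferred does close, so this is a correct (and completable) proof. The reduction to the internal tree $T^{\circ}$, the choice of a deepest branch vertex $b$, and the merge move are all valid: the move keeps every degree in $\{1,3\}$, preserves the number of leaves, strictly decreases the number of branch vertices, and therefore terminates at the unique caterpillar. Moreover, no induction is needed for the deferred inequality. If $k_1,k_2\geq 1$ denote the spine lengths of $B_1,B_2$, your recursion $t\mapsto 2(1+t)$ with tip value $4$ gives $1+g_{k}=3\cdot 2^{k}-1$, and summing the root-counts along the spine together with the $k+1$ leaves gives $N(B_{k})=6\cdot 2^{k}-k-5$; hence your first bracket equals $3(2^{k_1}-1)(2^{k_2}-1)$, your second bracket equals $-6(2^{k_1}-1)(2^{k_2}-1)$ (exactly $-2$ times the first), and so
\[
N(T)-N(T')=3\,(2^{k_1}-1)(2^{k_2}-1)\,\bigl(g_U-1\bigr)\;\geq\; 9\;>\;0,
\]
using your correct observation that $g_U\geq 4$: since $b$ has degree $3$ in $T^{\circ}$, all three neighbors of $b$ are internal vertices of $T$, so the root of $U$ has two children inside $U$ whether or not $b$ is the root of $T^{\circ}$. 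Since every non-caterpillar admits such a strictly $N$-decreasing, leaf-count-preserving move, both minimality and uniqueness of the caterpillar follow. Methodologically, your proof is a local-surgery exchange argument, very much in the spirit of the transformations this paper performs for its own optimization problem in Section~\ref{sec: Min_subtrees} (compare Lemmas~\ref{lem:sub4}--\ref{lem:sub7}); what this route buys is the strict inequality at every step, and hence uniqueness of the minimizer, without ever needing a closed formula for the total number of subtrees of the full caterpillar.
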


As an immediate consequence of Theorem~\ref{theo:eric}, we obtain the following results for root-containing subtrees.

\begin{cor}
Fix $n\in \mathbb{Z}^+$. Among all binary trees with $n$ vertices and any choice of the root, the good binary tree (with the default root of the corresponding greedy structure) has the maximum number of root-containing subtrees. 
\end{cor}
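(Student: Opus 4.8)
The plan is to recognize this as a direct application of Theorem~\ref{theo:eric} to a single fixed degree sequence, and then to settle the one point that keeps it from being completely automatic, namely the freedom in the choice of root.

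First I would record that every binary tree on $n$ vertices has the same degree sequence: writing $I$ for the number of degree-$3$ vertices and $L$ for the number of leaves, $I+L=n$ and $3I+L=2(n-1)$ force $I=(n-2)/2$ and $L=(n+2)/2$, so $\overline{d}=(3,\dots,3,1,\dots,1)$ (and there are no binary trees at all when $n$ is odd, making the statement vacuous there). By the definition of a good binary tree in Section~\ref{sub:bin}, the greedy tree for this $\overline{d}$, rooted at its default degree-$3$ root, is precisely the good binary tree. Hence the quantity to be maximized, over all binary trees on $n$ vertices and all choices of root, is the number of root-containing subtrees in the class of rooted trees whose degree sequence is this fixed $\overline{d}$, and the first assertion of Theorem~\ref{theo:eric} names the greedy tree as the maximizer. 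Read with the comparison class ``all trees with degree sequence $\overline{d}$, rooted at any vertex,'' this gives the corollary at once.

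The subtlety I would be careful about is whether a non-default root, in particular a leaf, could beat the default degree-$3$ root, so I would give an independent argument ruling this out and reducing everything to a fixed root degree. Fix any binary tree $T$ with $n\geq 4$ and a leaf $r$ whose (necessarily degree-$3$) neighbor is $r'$. Applying the product formula from the proof of Claim~\ref{claim:min_max_Core} at the leaf $r$ gives
\[ F_T(r)=1+F_{T-r}(r'), \]
while applying it at $r'$ and splitting off the factor $1+F_{\{r\}}(r)=2$ contributed by the pendant leaf $r$ gives
\[ F_T(r')=2\,F_{T-r}(r'). \]
Since $r'$ still has a neighbor in $T-r$, we have $F_{T-r}(r')\geq 2$, so $F_T(r')>F_T(r)$. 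Thus no leaf root is ever optimal, and a maximizing pair (binary tree, root) must use a degree-$3$ root.

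Having forced the root degree to be $3$, I would finish by invoking the second assertion of Theorem~\ref{theo:eric}: among binary trees on $n$ vertices rooted at a degree-$3$ vertex, the rooted greedy tree with root degree $3$ maximizes the number of root-containing subtrees. For the degree sequence $\overline{d}$ this rooted greedy tree coincides with the default greedy tree, i.e.\ with the good binary tree, because both constructions place a degree-$3$ vertex in $L_0$ and then fill each successive level greedily with the largest remaining degrees, and for $\overline{d}=(3,\dots,3,1,\dots,1)$ the two recipes yield the same level-degree sequence. Combining the two steps shows that the good binary tree with its default root realizes the overall maximum. I expect the leaf comparison $F_T(r')>F_T(r)$ to be the only step requiring genuine care: it is where the product formula must be applied twice and reconciled with the vertex deletion $T-r$, whereas everything else is bookkeeping about the degree sequence together with a citation of Theorem~\ref{theo:eric}.
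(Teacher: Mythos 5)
Your proposal is correct and takes essentially the same route as the paper, which presents this corollary as an immediate consequence of Theorem~\ref{theo:eric} applied to the common degree sequence $(3,\dots,3,1,\dots,1)$ of all binary trees on $n$ vertices. Your supplementary argument ruling out leaf roots (via $F_T(r)=1+F_{T-r}(r')$ versus $F_T(r')=2\,F_{T-r}(r')$) is correct but redundant, since the first assertion of Theorem~\ref{theo:eric} already quantifies over all rooted trees with the given degree sequence, i.e., over every choice of root.
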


\begin{cor}\label{lem:bin1}
Fix $n\in \mathbb{Z}^{+}$. Among all rooted binary trees with $n$ vertices, the rgood binary tree is the unique tree that maximizes the number of root-containing subtrees.
\end{cor}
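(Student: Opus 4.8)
The plan is to reduce the statement to a single application of the rooted half of Theorem~\ref{theo:eric}, the key enabling observation being that once $n$ is fixed, \emph{every} rooted binary tree on $n$ vertices has one and the same degree sequence with the root degree distinguished as $2$. First I would record this structural fact by a counting argument. In a rooted binary tree the root has degree $2$, every non-root internal vertex has degree $3$, and every leaf has degree $1$; writing $i$ for the number of degree-$3$ vertices and $\ell$ for the number of leaves, the two identities $n=1+i+\ell$ and $\sum_{x} \degree(x)=2(n-1)$ force $i=(n-3)/2$ and $\ell=(n+1)/2$ (so such trees exist precisely for odd $n\ge 3$; for the remaining $n$ the statement is vacuous). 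Hence the degree sequence is the fixed multiset $(3^{(n-3)/2},2,1^{(n+1)/2})$ with the unique $2$ earmarked as the root degree, regardless of which rooted binary tree we chose.

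With this in hand, the existence of a maximizer is immediate. By definition the rgood binary tree is exactly the rooted greedy tree for this degree sequence with root degree $2$. Applying the second half of Theorem~\ref{theo:eric}, with the root degree fixed at $2$, the rooted greedy tree maximizes the number of $k'$-vertex root-containing subtrees for every $k'$, and therefore maximizes the total number of root-containing subtrees among all rooted trees with this degree sequence and root degree. Since, by the structural fact, this class is precisely the class of all rooted binary trees on $n$ vertices, the rgood binary tree is a maximizer.

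The main obstacle is \emph{uniqueness}, since Theorem~\ref{theo:eric} as stated yields a maximizer but not \emph{a priori} the only one. To settle this I would use the level-wise nature of the optimality: because the total count is the sum over $k'$ of the numbers of $k'$-vertex root-containing subtrees, and the rooted greedy tree simultaneously maximizes each summand, it suffices to exhibit a single value of $k'$ for which the rooted greedy tree is the \emph{strict} maximizer. Any competitor attaining the maximum total would then have to match the greedy tree in that coordinate as well, and strict optimality there forces it to be isomorphic to the rgood binary tree. This strict optimality for at least one $k'$ is exactly the refinement supplied by the equality analysis in the proof of Theorem~\ref{theo:eric} in \cite{eric} (the case of equality in the compression/swapping step that drives greedy extremality), which I would cite directly; re-deriving it from scratch would be the only genuinely nonroutine part of the argument.
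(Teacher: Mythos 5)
Your proposal is correct and follows essentially the same route as the paper, which states this corollary as an immediate consequence of Theorem~\ref{theo:eric}: since every rooted binary tree on $n$ vertices has the same degree sequence with the root degree distinguished as $2$, the rooted greedy tree for that data --- the rgood binary tree --- is the maximizer. Your explicit handling of uniqueness via the equality analysis in \cite{eric} is a detail the paper leaves implicit, but it is the same underlying argument, not a different approach.
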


For binary trees, we can examine the distance between vertices of different middle parts in much the same way that we did in Section~\ref{dist middle parts}. While the exact calculations are quite messy, we believe the following is true.

\begin{conj}
Among binary trees of order $n$, the tree $T$ that maximizes $d(CT(T), C(T))$, is formed by identifying the root of an rgood binary tree with a vertex of maximum eccentricity in a binary caterpillar (Figure~\ref{fig:c_ct1_bin}). The same tree structure maximizes $d(Core(T), CT(T))$ as well as $d(Core(T), C(T))$
\label{conj_bin}
\end{conj}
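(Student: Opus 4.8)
The plan is to transplant the comet arguments of Section~\ref{dist middle parts} into the degree-$3$ world by replacing the two degenerate ingredients of a comet with their binary analogues. The star, which is the densest rooted piece and anchors the centroid and the subtree core in the unrestricted proofs, should be replaced by the rgood binary tree, which by Corollary~\ref{lem:bin1} uniquely maximizes the number of root-containing subtrees among rooted binary trees of its order. The path, which is the longest and thinnest piece and drives the eccentricity, should be replaced by the binary caterpillar, the thinnest binary structure and the one of largest diameter for its order. For each of the three pairs of middle parts I would open exactly as in Theorems~\ref{theo:c_ct}, \ref{theo:ct_core1}, and~\ref{theo:core_ct1}: fix vertices realizing the distance, cut $T$ along the connecting path, and express $d(u,v)$ through the sizes and root-subtree counts of the two sides, controlled by Propositions~\ref{prop:mid_c}--\ref{prop:mid_core}.

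For $d(CT(T),C(T))$ the centroid side must hold more than $n/2$ vertices (Proposition~\ref{prop:mid_ct}) while the center is pinned by the longest pendant path (Proposition~\ref{prop:mid_c}), just as in Theorem~\ref{theo:c_ct}. The binary constraint enters through the fact that a subtree on $m$ vertices cannot be packed below height about $\log_2 m$, so the heavy side is forced to spread, whereas the caterpillar realizes the largest eccentricity available for its order. Balancing how many vertices to devote to the dense rgood block against the stretched caterpillar spine is what should select the extremal tree, with the rgood block absorbing the centroid and the center resting far out along the spine. For the two core-involving distances I would swap the vertex-count balance for the subtree-count balance of Proposition~\ref{prop:mid_core}, and replace the clean estimate $F_{T_v}(v)\le 2^{y-1}$ of Theorem~\ref{theo:ct_core1} by the binary bound furnished by Corollary~\ref{lem:bin1} and Theorem~\ref{theo:eric}; here the rgood block is the most vertex-efficient way to capture the core at one end, leaving the maximum number of vertices for the caterpillar to push the other middle part away. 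Because all three analyses are driven toward the same pair of ingredients, the optima should all be of rgood-plus-caterpillar type, with only the relative sizes of the two pieces depending on which distance is being maximized, mirroring how the unrestricted extremal trees were all comets of differing parameters.

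The main obstacle is exactly the one flagged before the statement: the rgood binary tree has no closed-form count of root-containing subtrees to play the role of the star's $2^{y-1}$. The threshold deciding how large the dense block must be -- and hence the precise order at which the extremal parameters jump -- is therefore governed by the recursion underlying the level-greedy construction rather than by a formula, and each resulting bound carries floor functions depending on the residue of $n$ modulo small constants. Turning these into clean inequalities, and in particular certifying that the optimal block is genuinely the rgood tree and not merely some other near-balanced binary tree, will lean on the sharp form of Theorem~\ref{theo:eric} together with a detailed case analysis.

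The deepest difficulty, however, is uniqueness. One must show by an exchange argument that any binary tree attaining the maximal distance can be reshaped, without decreasing that distance, into the rgood-plus-caterpillar form, and that any perturbation at the interface where the block meets the caterpillar strictly worsens one of the quantities controlled by Propositions~\ref{prop:mid_ct} and~\ref{prop:mid_core}. Managing this interface, where the centroid and core balance conditions are simultaneously near-tight, is where I expect the bookkeeping to become genuinely delicate, and is the most plausible reason the assertion is recorded as Conjecture~\ref{conj_bin} rather than proved.
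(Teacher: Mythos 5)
First, a point of order: the statement you set out to prove is recorded in the paper as Conjecture~\ref{conj_bin}, and the paper contains \emph{no} proof of it --- the authors only remark that ``the exact calculations are quite messy'' before stating what they believe to be true. So there is no argument in the paper to compare yours against, and your submission must stand on its own. By your own admission it does not: it is a research plan whose essential steps are announced rather than carried out, so it cannot be accepted as a proof of any of the three assertions.

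The gaps are concrete. (i) No upper bound on $d(u,v)$ is ever derived. The comet proofs close because the cut-along-$P(u,v)$ analysis feeds on two sharp, explicit estimates from Claim~\ref{claim:min_max_Core}: $F_{T_v}(v)\le 2^{y-1}$ for the star side and $F_{T-T_v}(w)\ge n-y$ for the path side. In the binary setting both replacements --- the root-subtree count of the rgood tree (Corollary~\ref{lem:bin1}) and that of a binary caterpillar rooted at a spine end --- are exponential in the order, with different bases, and neither has a closed form; in particular the caterpillar count is exponential rather than linear, so the balance point between the dense block and the spine is a constant proportion of $n$ rather than the logarithmic split of Theorem~\ref{theo:ct_core1}. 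Your sketch does not register this structural difference and supplies neither exact nor asymptotic handles on either count, so the inequality chain never terminates in a bound of the form $d(u,v)\le f(n)$. (ii) Even granting such a bound, attaining it requires verifying via Propositions~\ref{prop:mid_ct} and~\ref{prop:mid_core} that in the rgood-plus-caterpillar tree the centroid (respectively the core) really sits at the root of the rgood block and the center sits deep in the spine; this verification needs the same unavailable counts, and it also needs a binary analogue of Remark~\ref{rem:k1} asserting that the caterpillar minimizes root-containing subtrees among rooted binary trees --- a statement the paper makes only for the path in the bounded-degree setting, and which does not transfer automatically since the path is not a binary tree. (iii) The uniqueness step is a single sentence naming an ``exchange argument''; nothing is exchanged. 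In short, your plan is a sensible reading of why the authors believe the conjecture --- it is essentially the heuristic they themselves suggest --- but it proves none of it.
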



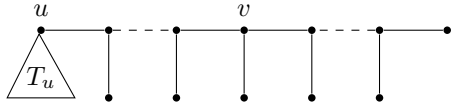
\begin{figure}[htbp]
\centering
\begin{tabular}{c}
    \begin{tikzpicture}[scale=.9]
        \node[fill=black,circle,inner sep=1pt] (t2) at (1,1) {};
        \node[fill=black,circle,inner sep=1pt] (t3) at (2,1) {};
        \node[fill=black,circle,inner sep=1pt] (t4) at (3,1) {};
        \node[fill=black,circle,inner sep=1pt] (t5) at (4,1) {};
        \node[fill=black,circle,inner sep=1pt] (t6) at (5,1) {};
        \node[fill=black,circle,inner sep=1pt] (t7) at (6,1) {};
        \node[fill=black,circle,inner sep=1pt] (t8) at (7,1) {};
        
        \node[fill=black,circle,inner sep=1pt] (t13) at (2,0) {};
        \node[fill=black,circle,inner sep=1pt] (t14) at (3,0) {};
        \node[fill=black,circle,inner sep=1pt] (t15) at (4,0) {};
        \node[fill=black,circle,inner sep=1pt] (t16) at (5,0) {};
        \node[fill=black,circle,inner sep=1pt] (t17) at (6,0) {};

        \draw (t3)--(t13);
        \draw (t4)--(t14);
        \draw (t5)--(t15);
        \draw (t6)--(t16);
        \draw (t7)--(t17);

        \draw (t5)--(t6);
        \draw (t4)--(t5);
        \draw (t8)--(t7);
        \draw [dashed] (t3)--(t4);
        \draw (t2)--(t3);
        
        \draw (t2)--(0.5,0)--(1.5,0)--cycle;

        \draw [dashed] (t7)--(t6);
        
        \node at (4,1.3) {$v$};
        \node at (1,1.3) {$u$};
        
        \node at (1,.3) {$T_u$};
       \node at (-1.5,1.3) {};
       \node at (9.5, 1.3){};
        
        \end{tikzpicture}
        \end{tabular}
\caption[An extremal binary tree structure which is conjectured to maximize the distance between each pair of middle sets.]{An extremal binary tree structure which is conjectured to maximize the distances $d(CT(T), C(T))$, $d(Core(T), CT(T))$, and $d(Core(T), C(T))$ for $u$ and $v$ as in Conjecture~\ref{conj_bin}. The tree $T_u$ is an rgood binary tree.}\label{fig:c_ct1_bin}
\end{figure} 

\subsection{Trees with bounded maximum degree}\label{sub:max}

We now turn our focus to trees on $n$ vertices, all of which have degree at most $k$. 
We previously defined good binary trees and rgood binary trees. In general, for each positive integer $k$, a {\it good tree} is a greedy tree with degree sequence \[(k,k,\ldots, k, 1,1,\ldots 1)\] while the {\it rgood trees} are  rooted greedy trees with root degree $k-1$ and degree sequence \[(k,k,\ldots, k, k-1, 1,1,\ldots 1).\] For any fixed $k$, these trees only exist for certain values of $n$. Therefore, we extend their definitions as follows so that we can create similar trees for any $n>k$.

For positive integers $n,\,k$ ($n > k$), a tree with order $n$ and maximum degree $k$ is called an {\em extended good tree} if it is a greedy tree with degree sequence \[(k,k,\ldots,k, s, 1,\ldots, 1)\] for some $1\leq s< k$ (Figure~\ref{fig:comp}). Notice that the degree sequence is determined by $n$ and $k$. By the division algorithm, we can uniquely write $n-2=q'(k-1)+s'$ with $s'<k-1$. Thus $q'$ will be the number of vertices of degree $k$, one vertex will have degree $s=s'+1$, and the rest will be leaves. 

\begin{figure}[htbp]
\centering
\begin{tabular}{c}
    \begin{tikzpicture}[scale=0.6,x=1.3cm, y=.8cm]
        \node[fill=black,circle,inner sep=1.5pt] (v) at (10,6) {}; 
        \node[fill=black,circle,inner sep=1.5pt] (v4) at (4,4) {};
        \node[fill=black,circle,inner sep=1.5pt] (v3) at (8,4) {};
        \node[fill=black,circle,inner sep=1.5pt] (v2) at (12,4) {};
        \node[fill=black,circle,inner sep=1.5pt] (v1) at (16,4) {};
        \node[fill=black,circle,inner sep=1.5pt] (v42) at (3,2) {};
        \node[fill=black,circle,inner sep=1.5pt] (v12) at (16,2) {};
        \node[fill=black,circle,inner sep=1.5pt] (v41) at (5,2) {};
        \node[fill=black,circle,inner sep=1.5pt] (v32) at (7,2) {};
        \node[fill=black,circle,inner sep=1.5pt] (v22) at (12,2) {};
        \node[fill=black,circle,inner sep=1.5pt] (v31) at (9,2) {};
        \node[fill=black,circle,inner sep=1.5pt] (v23) at (11,2) {};
        \node[fill=black,circle,inner sep=1.5pt] (v21) at (13,2) {};
        \node[fill=black,circle,inner sep=1.5pt] (v13) at (15,2) {};        
        \node[fill=black,circle,inner sep=1.5pt] (v11) at (17,2) {};         
        \node[fill=black,circle,inner sep=1.5pt] (v33) at (8,2) {};        
        \node[fill=black,circle,inner sep=1.5pt] (v43) at (4,2) {}; 
        
        \node[fill=black,circle,inner sep=1.5pt] (v111) at (2.7,0) {};
        \node[fill=black,circle,inner sep=1.5pt] (v112) at (3.3,0) {};
        \node[fill=black,circle,inner sep=1.5pt] (v121) at (3.7,0) {};
        \node[fill=black,circle,inner sep=1.5pt] (v122) at (4.3,0) {};
        \node[fill=black,circle,inner sep=1.5pt] (v131) at (4.7,0) {};
        \node[fill=black,circle,inner sep=1.5pt] (v132) at (5.3,0) {};
        \node[fill=black,circle,inner sep=1.5pt] (v211) at (6.7,0) {};
        \node[fill=black,circle,inner sep=1.5pt] (v212) at (7.3,0) {};
        \node[fill=black,circle,inner sep=1.5pt] (v221) at (7.7,0) {};
        \node[fill=black,circle,inner sep=1.5pt] (v222) at (8.3,0) {};
        \node[fill=black,circle,inner sep=1.5pt] (v231) at (9,0) {};        
        
        \node[fill=black,circle,inner sep=1.5pt] (v113) at (3,0) {};
        \node[fill=black,circle,inner sep=1.5pt] (v123) at (4,0) {};
        \node[fill=black,circle,inner sep=1.5pt] (v133) at (5,0) {};  
        
        \node[fill=black,circle,inner sep=1.5pt] (v213) at (7,0) {};
        \node[fill=black,circle,inner sep=1.5pt] (v223) at (8,0) {};

        \draw (v)--(v1);
        \draw (v)--(v2);
        \draw (v)--(v3);
        \draw (v)--(v4);
        \draw (v1)--(v11);
        \draw (v1)--(v12);
        \draw (v1)--(v13);
        \draw (v2)--(v21);
        \draw (v2)--(v22);
        \draw (v2)--(v23);
        \draw (v3)--(v31);
        \draw (v3)--(v32);
        \draw (v4)--(v41);
        \draw (v4)--(v42);
        \draw (v42)--(v111);
        \draw (v42)--(v112);
        \draw (v43)--(v121);
        \draw (v43)--(v122);
        \draw (v41)--(v131);
        \draw (v41)--(v132);
        \draw (v32)--(v211);
        \draw (v32)--(v212);
        \draw (v33)--(v221);
        \draw (v33)--(v222);
        \draw (v31)--(v231);
        
        \draw (v42)--(v113);
        \draw (v43)--(v123);
        \draw (v41)--(v133);
        \draw (v32)--(v213);
        \draw (v33)--(v223);
        
        \draw (v3)--(v33);
        \draw (v4)--(v43);

    \end{tikzpicture}
\end{tabular}
\caption{An extended good tree with 33 vertices and maximum degree 4.}
\label{fig:comp}
\end{figure}
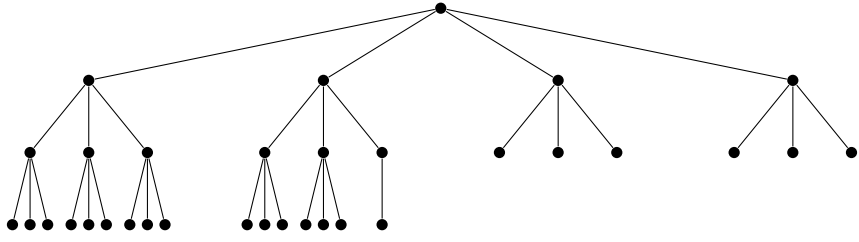

Similarly, for positive integers $n,k$, the {\em extended rgood tree} with order $n$ and maximum degree $k$, is a rooted greedy tree with root degree $k-1$ and degree sequence \[(k,k,\ldots,k, k-1, s, 1,\ldots, 1)\] for some $1\leq s< k$ (Figure~\ref{fig:rcomp}).  

\begin{figure}[htbp]
\centering
\begin{tabular}{c}
    \begin{tikzpicture}[scale=0.5, x=1.4cm,y=.8cm]
        \node[fill=black,circle,inner sep=1.5pt] (v) at (10,6) {}; 
        \node[fill=black,circle,inner sep=1.5pt] (v4) at (4,4) {};
        \node[fill=black,circle,inner sep=1.5pt] (v3) at (8+2,4) {};

        \node[fill=black,circle,inner sep=1.5pt] (v1) at (16,4) {};
        \node[fill=black,circle,inner sep=1.5pt] (v42) at (3,2) {};
        \node[fill=black,circle,inner sep=1.5pt] (v12) at (16,2) {};
        \node[fill=black,circle,inner sep=1.5pt] (v41) at (5,2) {};
        \node[fill=black,circle,inner sep=1.5pt] (v32) at (7+2,2) {};
        
        \node[fill=black,circle,inner sep=1.5pt] (v31) at (9+2,2) {};
        \node[fill=black,circle,inner sep=1.5pt] (v13) at (15,2) {};        
        \node[fill=black,circle,inner sep=1.5pt] (v11) at (17,2) {};         
        \node[fill=black,circle,inner sep=1.5pt] (v33) at (8+2,2) {};        
        \node[fill=black,circle,inner sep=1.5pt] (v43) at (4,2) {}; 
        
        \node[fill=black,circle,inner sep=1.5pt] (v111) at (2.7,0) {};
        \node[fill=black,circle,inner sep=1.5pt] (v112) at (3.3,0) {};
        \node[fill=black,circle,inner sep=1.5pt] (v121) at (3.7,0) {};
        \node[fill=black,circle,inner sep=1.5pt] (v122) at (4.3,0) {};
        \node[fill=black,circle,inner sep=1.5pt] (v131) at (4.7,0) {};
        \node[fill=black,circle,inner sep=1.5pt] (v132) at (5.3,0) {};
        \node[fill=black,circle,inner sep=1.5pt] (v211) at (6.7+2,0) {};
        \node[fill=black,circle,inner sep=1.5pt] (v212) at (7.3+2,0) {};
        \node[fill=black,circle,inner sep=1.5pt] (v221) at (7.7+2,0) {};
        \node[fill=black,circle,inner sep=1.5pt] (v222) at (8.3+2,0) {};
        \node[fill=black,circle,inner sep=1.5pt] (v231) at (9+2,0) {};        
        
        \node[fill=black,circle,inner sep=1.5pt] (v113) at (3,0) {};
        \node[fill=black,circle,inner sep=1.5pt] (v123) at (4,0) {};
        \node[fill=black,circle,inner sep=1.5pt] (v133) at (5,0) {};  
        
        \node[fill=black,circle,inner sep=1.5pt] (v213) at (7+2,0) {};
        \node[fill=black,circle,inner sep=1.5pt] (v223) at (8+2,0) {};

        \draw (v)--(v1);

        \draw (v)--(v3);
        \draw (v)--(v4);
        \draw (v1)--(v11);
        \draw (v1)--(v12);
        \draw (v1)--(v13);
        \draw (v3)--(v31);
        \draw (v3)--(v32);
        \draw (v4)--(v41);
        \draw (v4)--(v42);
        \draw (v42)--(v111);
        \draw (v42)--(v112);
        \draw (v43)--(v121);
        \draw (v43)--(v122);
        \draw (v41)--(v131);
        \draw (v41)--(v132);
        \draw (v32)--(v211);
        \draw (v32)--(v212);
        \draw (v33)--(v221);
        \draw (v33)--(v222);
        \draw (v31)--(v231);
        
        \draw (v42)--(v113);
        \draw (v43)--(v123);
        \draw (v41)--(v133);
        \draw (v32)--(v213);
        \draw (v33)--(v223);
        
        \draw (v3)--(v33);
        \draw (v4)--(v43);

    \end{tikzpicture}
\end{tabular}
\caption{An extended rgood tree with 29 vertices and maximum degree 4.}
\label{fig:rcomp}
\end{figure}
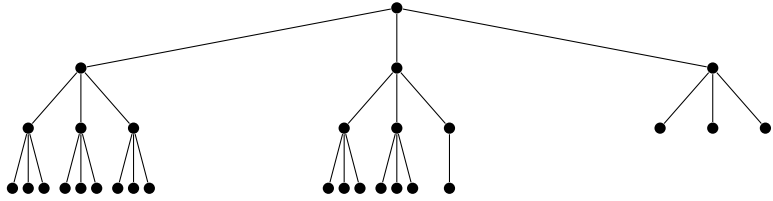

Among all rooted trees with $n$ vertices, maximum degree $k$, and root degree $\rho$, $1\leq \rho \leq k-1$, we seek the one with the maximum number of root-containing subtrees.

\begin{thm}\label{theo:kary}
Among all rooted trees with $n$ vertices, maximum degree $k$, and root degree $\rho$ where $1\leq \rho \leq k-1$, the extended rgood tree maximizes the number of root-containing subtrees.
\end{thm}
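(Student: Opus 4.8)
The plan is to combine Theorem~\ref{theo:eric} with a majorization argument on degree sequences. First I would invoke Theorem~\ref{theo:eric} to reduce the problem: for any fixed tree degree sequence $\overline{d}$ with a distinguished root degree, the rooted greedy tree already maximizes the number of root-containing subtrees among all rooted trees with that degree sequence and root degree. Hence it suffices to compare rooted greedy trees across all degree sequences compatible with the given constraints (order $n$, maximum degree at most $k$, root degree $\rho$), and to show that the extended rgood tree (with its root degree taken to be $\rho$) wins this comparison. Throughout I would exploit the multiplicative identity $F_T(r)=\prod_{i=1}^{\rho}\bigl(1+F_{T_i}(v_i)\bigr)$ already used in the proof of Claim~\ref{claim:min_max_Core}, where $T_1,\dots,T_\rho$ are the branches hanging at the children $v_1,\dots,v_\rho$ of the root.

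Next I would recast the remaining task as a combinatorial optimization over degree sequences. With the root degree fixed at $\rho$, the non-root vertices carry a multiset of $n-1$ degrees, each in $\{1,\dots,k\}$ and summing to $2(n-1)-\rho$. I would put the majorization order on these multisets and observe that the extended rgood degree sequence — root degree $\rho$, as many entries equal to $k$ as the degree budget permits, a single leftover entry $s$ with $1\le s<k$, and the remaining entries equal to $1$ — is the unique majorization-maximal element of this feasible set, with $q=\lfloor (n-1-\rho)/(k-1)\rfloor$ entries equal to $k$ and $s=1+\bigl((n-1-\rho)\bmod(k-1)\bigr)$, exactly the quantities produced by the division algorithm in the definition. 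This maximality is elementary: any feasible multiset that is not this one contains two non-root degrees $a\le b$ with $a\ge 2$ and $b\le k-1$, and replacing them by $(a-1,b+1)$ strictly raises the majorization order while preserving the count and the sum; iterating terminates precisely at the extended rgood sequence.

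The main step, and the one I expect to be the chief obstacle, is the monotonicity lemma: if $\overline{d}'$ is obtained from $\overline{d}$ by one such elementary transfer, then the rooted greedy tree for $\overline{d}'$ has at least as many root-containing subtrees as that for $\overline{d}$. Granting it, iterating the transfers carries any feasible greedy tree to the extended rgood tree without ever decreasing the count, which finishes the proof via the reduction of the first paragraph. To establish the lemma I would compare the two greedy trees directly. Because both are assembled level by level with degrees assigned in non-increasing order, a single transfer perturbs the structure only locally, and I would track its effect on the branch decomposition and then estimate $\prod_i\bigl(1+F_{T_i}(v_i)\bigr)$. Here I would lean on the strengthened, size-by-size form of Theorem~\ref{theo:eric} — that the greedy tree maximizes the number of root-containing subtrees of each fixed size $k'$ — to compare the two branch structures coefficient by coefficient. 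The crux is that concentrating degree onto fewer, higher-degree vertices nearer the root can only enlarge each such coefficient; I would formalize this through a size-preserving injection from the root-containing subtrees of the less top-heavy tree into those of the more top-heavy one, reinforced by the elementary convexity $(1+ax)(1+b)\le(1+a)(1+bx)$, valid for $0\le a\le b$ and $x\ge 1$ since the difference equals $(b-a)(x-1)$, which captures that spreading the factors $1+F_{T_i}$ apart increases their product.

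Finally, once the monotonicity lemma is in hand, I would record that the extremal degree sequence is pinned down by $n$, $k$, and $\rho$ exactly as in the definition of the extended rgood tree, so that the maximizer is precisely that tree. If a uniqueness statement is wanted, it follows by tracking the strict cases of the elementary transfer and the equality case of the convexity inequality above.
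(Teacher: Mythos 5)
Your proposal diverges from the paper's theorem at one crucial point: you fix the root degree at $\rho$ throughout and speak of ``the extended rgood tree with its root degree taken to be $\rho$.'' But the extended rgood tree is \emph{defined} in the paper as a rooted greedy tree with root degree $k-1$ and degree sequence $(k,\ldots,k,k-1,s,1,\ldots,1)$, and the theorem (as its proof and its later applications make clear) asserts that this single tree beats every rooted tree whose root degree lies anywhere in the range $1\leq \rho\leq k-1$. Your argument compares trees only within one fixed-root-degree class, so at best it shows that, among trees with root degree exactly $\rho$, the majorization-maximal greedy tree \emph{with that root degree} wins. What is missing is the comparison across root degrees: why the optimum over the whole class must have root degree $k-1$. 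The paper supplies exactly this as Lemma~\ref{lem:kary}: if the root $r$ has degree at most $k-2$, pick a non-leaf child $u$ of $r$ and a child $v$ of $u$, and pass to $T'=T-\{uv\}+\{rv\}$; every root-containing subtree of $T$ survives in $T'$ (identified by its vertex set), while $T'$ gains new subtrees containing $v$ but not $u$, so the count strictly increases. Without this step (or some substitute for it), your proof establishes a per-$\rho$ variant whose extremal tree is not the extended rgood tree unless $\rho=k-1$, not the theorem as stated.

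The remainder of your plan coincides with the paper's: invoke Theorem~\ref{theo:eric} to reduce to rooted greedy trees, then show that majorization of degree sequences (with the root degree pinned at $k-1$) can only increase the number of root-containing subtrees. This monotonicity statement is the paper's Lemma~\ref{lem:kary0}, which the paper itself does not prove in detail, citing it as a simpler analogue of Theorem 11 of Andriantiana, Wagner, and Wang; your elementary-transfer argument for why the extended rgood degree sequence is majorization-maximal is correct, and your sketch of the monotonicity step (size-by-size comparison via Theorem~\ref{theo:eric} together with the inequality $(1+ax)(1+b)\le(1+a)(1+bx)$ for $0\le a\le b$, $x\ge 1$) is plausible and at roughly the same level of rigor as the paper's citation. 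So the root-degree comparison is the one genuine gap; once you add the edge-relocation argument of Lemma~\ref{lem:kary}, your route and the paper's are essentially the same.
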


Theorem~\ref{theo:kary} follows from Lemmas~\ref{lem:kary} and \ref{lem:kary0} below.  

\begin{lemma}\label{lem:kary}
For any pair $(n,k)$ of positive integers, $n>k$, a tree with $n$ vertices, maximum degree $k$, and root degree $k-1$ which  maximizes the number of root-containing subtrees must have root degree $k-1$. 
\end{lemma}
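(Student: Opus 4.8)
The plan is to read the statement in the only nontrivial way: among rooted trees with $n$ vertices, maximum degree $k$, and root degree \emph{at most} $k-1$, every tree that maximizes the number of root-containing subtrees has root degree exactly $k-1$. The single tool I need is the multiplicative identity already used in the proof of Claim~\ref{claim:min_max_Core}: if the root $r$ has children $v_1,\dots,v_\rho$ and $T_i$ is the subtree of $T$ hanging below $v_i$, then
\[ F_T(r)=\prod_{i=1}^{\rho}\bigl(1+F_{T_i}(v_i)\bigr). \]
I would argue by contradiction, assuming a maximizer $T$ has root degree $\rho\le k-2$, and then exhibit one local move that strictly increases $F_T(r)$ without violating any constraint.

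The move is to promote a grandchild of $r$ to a child of $r$. Since $\rho\le k-2$ forces $n>k\ge\rho+2>\rho+1$, the tree has more than $\rho+1$ vertices, so $r$ has a child that is not a leaf; call it $v_1$, let $w$ be a child of $v_1$, and let $S$ be the subtree rooted at $w$. Delete the edge $v_1w$ and add the edge $rw$, obtaining $T'$. This raises the root degree to $\rho+1\le k-1<k$, decreases $\degree(v_1)$ by one, and changes no other degree, so $T'$ still has $n$ vertices, maximum degree at most $k$, and an admissible root degree. Writing $T_1'$ for $T_1$ with the branch $S$ deleted and setting $a:=F_{T_1'}(v_1)\ge 1$ and $b:=1+F_S(w)\ge 2$, the factor contributed by $v_1$ in $F_T(r)$ is $1+ab$, whereas in $F_{T'}(r)$ the vertex $v_1$ and the new child $w$ together contribute $(1+a)\,b$, while all remaining factors $\prod_{i\ge2}\bigl(1+F_{T_i}(v_i)\bigr)$ are untouched. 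Since
\[ (1+a)\,b-(1+ab)=b-1\ge 1>0, \]
we obtain $F_{T'}(r)>F_T(r)$, contradicting maximality. Hence no maximizer can have root degree below $k-1$, and with the standing bound $\rho\le k-1$ the root degree equals $k-1$.

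I do not anticipate a serious obstacle here, since the whole proof is a single exchange step. The two points that need care are (i) the existence of a non-leaf child of $r$, which is exactly where the hypothesis $n>k$ (giving $n>\rho+1$) is used, and (ii) the verification that promoting $w$ respects the degree bound, which holds because the only degree that grows is that of $r$, and only to $\rho+1\le k-1$. The strict gain $b-1>0$ driving the argument is elementary, using only $F_S(w)\ge 1$. One could note in passing that the same move read in reverse explains why shrinking the root degree never helps, which motivates fixing root degree $k-1$ before passing to Lemma~\ref{lem:kary0}.
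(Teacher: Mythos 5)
Your proof is correct and is essentially the same as the paper's: both arguments take a maximizer whose root degree is at most $k-2$, find a non-leaf child $v_1$ of the root (using $n>k$), and promote a grandchild $w$ to a child of the root via the exchange $T'=T-v_1w+rw$, checking that the degree constraints survive. The only cosmetic difference is how the strict increase is verified — the paper injects vertex sets of root-containing subtrees of $T$ into those of $T'$ and exhibits extra subtrees containing $w$ but not $v_1$, while you compute the gain $(1+a)b-(1+ab)=b-1>0$ from the product formula — and both verifications are sound.
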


\begin{proof}
For contradiction, suppose $T$ is such a tree with root $r$ having degree $\rho\leq k-2$. Since $n \geq k$, there exists a child $u$ of $r$ that is not a leaf. Let $v$ be a child of $u$. 

Define $T':=T-\{uv\} + \{rv\}$. Every root-containing subtree in $T$ can be uniquely identified by its list of vertices. It is easy to see that each list forms a root-containing subtree in $T'$. However, $T'$ also has root-containing subtrees which contain $v$ and not $u$. These do not appear in $T$. Therefore $T'$ has more root-containing subtrees than $T$. This contradicts our choice of $T$. 
\end{proof}

\begin{defn}
For positive integer sequences $\pi=(d_0,\cdots,
d_{n-1})$ and $\pi^{\prime}=(d_0^{\prime}, \cdots, d_{n-1}^{\prime})$, we say $\pi^{\prime}$ \textit{majorizes} $\pi$, denoted $ \pi\triangleleft \pi^{\prime}$,  if for each $k\in \{0, \cdots, n-2\}$,
 \[\sum_{i=0}^{k}d_i\le\sum_{i=0}^kd_i^{\prime} \qquad \text{and} \qquad \sum_{i=0}^{n-1}d_i=\sum_{i=0}^{n-1}d_i^{\prime}.\]
 \end{defn}

The following is a simpler analogue of Theorem 11 of Andriantiana, Wagner, and Wang \cite{eric}. 
We skip the details.

\begin{lemma}\label{lem:kary0}
Let $T$ and $T'$ be rooted greedy trees on $n$ vertices with root degree $k-1$. If $T$ has degree sequence $\pi$ and $T'$ has degree sequence $\pi'$ where 
$ \pi\triangleleft \pi^{\prime} $, then $T'$ has more root-containing subtrees than $T$.
\end{lemma}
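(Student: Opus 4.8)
The plan is to prove Lemma~\ref{lem:kary0} by reducing to the case where $\pi'$ is obtained from $\pi$ by a single ``elementary'' majorization step, and then comparing the two rooted greedy trees vertex-by-vertex using Theorem~\ref{theo:eric}. Since majorization between two sequences with the same sum can always be realized by a finite chain $\pi = \pi^{(0)} \triangleleft \pi^{(1)} \triangleleft \cdots \triangleleft \pi^{(m)} = \pi'$ in which each step moves a single unit from a later coordinate to an earlier one (the standard Muirhead/Robin Hood transfer, adapted to keep every entry a valid tree degree), it suffices to handle one such step and then iterate. So first I would fix the root degree $k-1$ throughout, observe that removing the root value leaves the ``tail'' degree sequence to which the majorization is applied, and record that an elementary transfer increases some $d_a$ by $1$ and decreases some $d_b$ by $1$ with $a<b$, keeping the multiset a tree degree sequence.

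Next I would bring in Theorem~\ref{theo:eric} as the main engine. That theorem says that for a fixed degree sequence with a distinguished root degree, the rooted greedy tree simultaneously maximizes, for every $k'$, the number of root-containing subtrees on $k'$ vertices. The key structural fact I would exploit is that the rooted greedy tree is built level-by-level placing the largest available degrees as close to the root as possible. Consequently, when we pass from $\pi$ to the majorizing $\pi'$ via an elementary transfer, the rooted greedy tree for $\pi'$ has its ``heavier'' degrees pushed strictly closer to the root (or identical to before), which can only increase the number of root-containing subtrees of each size. The cleanest way to package this is to argue that the rooted greedy tree for $\pi'$ dominates that for $\pi$ in the sense of an injection-plus-extra-subtrees argument, analogous to the proof of Lemma~\ref{lem:kary}: every root-containing subtree of the $\pi$-tree corresponds to one in the $\pi'$-tree, and the strict majorization produces at least one additional root-containing subtree that has no counterpart.

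The hard part will be making the comparison between the two greedy trees precise, because an elementary transfer can restructure the tree globally---promoting a vertex to a higher level can cascade, changing which vertices are adjacent at lower levels. The honest route, and the one I would take, is to avoid hand-building an explicit vertex bijection and instead lean entirely on the size-refined statement of Theorem~\ref{theo:eric}: I would show that the greedy tree for $\pi'$ has, for every $k'$, at least as many root-containing subtrees on $k'$ vertices as the greedy tree for $\pi$, with strict inequality for at least one value of $k'$. This is where a majorization comparison for the greedy construction is needed; since the paper explicitly states this lemma is ``a simpler analogue of Theorem 11 of \cite{eric}'' and that the details are skipped, I would cite that analogue and verify that the simplification (only the total count matters, and only a single distinguished root degree is involved rather than the full refined machinery) goes through without the technical weight of the original. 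Summing the per-size inequalities over all $k'$ then yields that $T'$ has strictly more root-containing subtrees than $T$, completing the step; iterating along the majorization chain gives the full statement.

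Finally I would note the one place to be careful: the elementary transfers must be chosen so that every intermediate sequence $\pi^{(j)}$ is genuinely a tree degree sequence compatible with root degree $k-1$ (each entry at least $1$, the total degree sum equal to $2(n-1)$, and enough leaves to terminate every branch). This is guaranteed because majorization between two fixed tree degree sequences of equal length and sum can be routed through intermediate tree degree sequences---the transfer never creates a zero entry if we always decrease an entry that exceeds $1$ and increase an entry below $k$---so the chain stays inside the class of admissible sequences and each step is covered by the single-step argument above.
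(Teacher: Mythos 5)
Your reduction to elementary transfers is fine, but the core of your argument has a genuine gap: you attempt to compare the two greedy trees $G(\pi)$ and $G(\pi')$ directly, and the only tool you invoke for that comparison is Theorem~\ref{theo:eric}. That theorem cannot do this job. It compares trees sharing a \emph{single} degree sequence (the greedy tree is best among all trees with that sequence); it says nothing about how the greedy counts vary \emph{across} degree sequences, which is exactly what the lemma asserts. Your heuristic (``heavier degrees pushed closer to the root can only increase the counts'') is precisely the statement to be proven, and your fallback --- citing the analogue of Theorem 11 of \cite{eric} --- is not a proof but the same deferral the paper itself makes: the paper offers no argument for this lemma, stating only that it is a simpler analogue of that theorem and skipping the details. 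So your proposal, read as a self-contained proof, is circular at its key step: the single-transfer comparison of greedy trees \emph{is} the lemma.

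The missing idea is an intermediate, non-greedy tree. Starting from $G(\pi)$, realize the elementary transfer \emph{physically}: let $u$ be a vertex whose degree is to drop by one and $w$ a vertex whose degree is to rise by one, detach one child subtree $B$ (with root $b$) of $u$, and reattach it at $w$. The resulting tree $T''$ has degree sequence $\pi'$. Writing $T^*$ for the tree with $B$ removed and $N_{T^*}(x)$ for the number of root-containing subtrees of $T^*$ that contain $x$, one has $F_{G(\pi)}(\rho)=F_{T^*}(\rho)+F_B(b)\,N_{T^*}(u)$ and $F_{T''}(\rho)=F_{T^*}(\rho)+F_B(b)\,N_{T^*}(w)$, so the count strictly increases as soon as $N_{T^*}(w)>N_{T^*}(u)$; verifying this inequality from the greedy structure and the position condition $a<b$ is the one computation that must actually be done, and it is the general form of the surgery in Lemma~\ref{lem:kary}, which is the special case $w=\rho$. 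Then Theorem~\ref{theo:eric}, applied to the fixed sequence $\pi'$, gives $F_{G(\pi')}(\rho)\geq F_{T''}(\rho)>F_{G(\pi)}(\rho)$, and iterating along the transfer chain finishes the proof, with strictness inherited from the strict middle inequality. Without this intermediate tree, the injection you gesture at ``analogous to Lemma~\ref{lem:kary}'' has no home: there is no natural correspondence between the subtrees of two greedy trees with different degree sequences, since a single transfer can rearrange the greedy tree globally --- the very difficulty you name but then step around.
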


In the search for a tree which  maximizes the number of root-containing subtrees, Lemma~\ref{lem:kary} implies that it is sufficient to restrict our attention to trees with root degree $k-1$. Because we are considering only degree sequences on $n$ vertices with maximum degree $k$, it is easy to see that the degree sequence of the extended rgood tree majorizes all other such degree sequences. Thus, Lemma~\ref{lem:kary0} then implies that the extended rgood tree for order $n$ and maximum degree $k$ as stated in Theorem~\ref{theo:kary}.

For the purpose of our study of maximum distances between different middle parts, we also note the following fact.

\begin{rem}\label{rem:k1}
Among all rooted trees of given order, root degree at most $k-1$, and maximum degree $k$:
\begin{itemize}
\item the extended rgood tree minimizes the height;
\item the path (rooted at one end) minimizes the number of root-containing subtrees and maximizes the height.
\end{itemize}
\end{rem}

\subsection{Middle parts in trees with a given maximum degree}

Fix $n,k\in \mathbb{Z}^{+}$. Similar to the binary tree case, we restrict our attention to classes of trees which have order $n$ and  maximum degree $k$. In this section, we detail our findings for the trees in this class which maximize the distance between different middle parts.

\begin{thm}
For fixed $n,k\in \mathbb{Z}^{+}$, each tree $T$ with order $n$ and maximum degree $k$ has
\[d(CT(T), C(T)) \leq \frac{n-\left\lceil\frac{n+1}{2} \right\rceil-h_u}{2}\]
where \[h_u=\left\lceil\frac{ \ln\left( \lceil\frac{n+1}{2}\rceil(k-2)+1 \right)}{\ln(k-1)}\right\rceil-1.\]
This inequality is tight for the tree formed by identifying the root of an extended rgood tree with one end of a  path of appropriate length.  
\label{max_deg_ct_c}
\end{thm}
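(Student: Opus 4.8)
The plan is to mirror the structure of the proof of Theorem~\ref{theo:c_ct}, replacing the comet's star-like bunch with an extended rgood tree (which is the degree-$k$-constrained analogue of the star for maximizing root-containing subtrees, and simultaneously the height-minimizer by Remark~\ref{rem:k1}). Fix $v\in C(T)$ and $u\in CT(T)$ realizing $d(C(T),CT(T))$, and assume $d(u,v)\geq 1$. Let $P(u,v)$ be the connecting path, and let $T_u$ be the component of $T-E(P(u,v))$ containing $u$. As in Theorem~\ref{theo:c_ct}, the centroid condition (Proposition~\ref{prop:mid_ct}) forces $|V(T_u)|>n/2$, so $|V(T_u)|\geq \lceil\frac{n+1}{2}\rceil$. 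The center condition (Proposition~\ref{prop:mid_c}) gives a leaf $w$ with $P(v,w)$ disjoint from $P(u,v)$ (except at $v$) and $d(v,w)=\ecc_T(v)$, whence $d(u,v)\leq \ecc_T(v)-1\leq d(v,w)-1$, so $d(u,v)+d(v,w)\geq 2d(u,v)+1$. Counting the $n-|V(T_u)|$ vertices outside $T_u$ and noting they must accommodate both the path $P(u,v)$ and the path $P(v,w)$ gives a bound of the form $n-|V(T_u)|\geq d(u,v)+d(v,w)\geq 2d(u,v)+1$, hence $d(u,v)\leq \frac{1}{2}\bigl(n-\lceil\frac{n+1}{2}\rceil-1\bigr)$ in the unconstrained setting.

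The new ingredient is the degree constraint, which prevents $T_u$ from being a flat star and forces it to have positive height $h_u$. The plan is to show that the $\lceil\frac{n+1}{2}\rceil$ vertices of $T_u$, with maximum degree $k$ and the attachment vertex of degree at most $k-1$ (since it also connects to the path), cannot all sit within distance $1$ of the attachment point. More precisely, by Remark~\ref{rem:k1} the extended rgood tree minimizes the height among rooted trees with given order, root degree at most $k-1$, and maximum degree $k$; so the minimum possible height of $T_u$ is exactly the height $h_u$ of the extended rgood tree on $\lceil\frac{n+1}{2}\rceil$ vertices. I would compute $h_u$ by the standard branching count: a rooted tree of height $h$ with root degree $k-1$ and all other internal vertices of degree $k$ has at most $1+(k-1)(1+(k-1)+\cdots+(k-1)^{h-1})=1+(k-1)\frac{(k-1)^h-1}{k-2}$ vertices, and setting this $\geq\lceil\frac{n+1}{2}\rceil$ and solving for the least $h$ yields exactly
\[
h_u=\left\lceil\frac{\ln\!\left(\lceil\tfrac{n+1}{2}\rceil(k-2)+1\right)}{\ln(k-1)}\right\rceil-1.
\]
This height $h_u$ is ``wasted'' in the sense that the path $P(v,w)$ must reach from $v$ out to a genuine leaf at full eccentricity, but the portion of the tree hanging off $T_u$ now occupies $h_u$ levels of depth rather than one; carefully rebalancing the eccentricity inequality to account for this replaces the $-1$ by $-h_u$ and produces the stated bound $d(u,v)\leq\frac{1}{2}\bigl(n-\lceil\frac{n+1}{2}\rceil-h_u\bigr)$.

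For the extremal construction, I would exhibit the tree formed by identifying the root of the extended rgood tree (on $\lceil\frac{n+1}{2}\rceil$ vertices, root degree $\leq k-1$) with one end of a path whose length is chosen so that the far vertices $u$ and $w$ realize all three inequalities with equality; verifying via Propositions~\ref{prop:mid_c} and~\ref{prop:mid_ct} that $u$ lands in the centroid and $v$ in the center, and checking that $d(u,v)$ attains the bound, confirms tightness.

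\textbf{Main obstacle.} The delicate step is the height bookkeeping in the eccentricity inequality: I must argue that the minimum height $h_u$ of the heavy side $T_u$ genuinely \emph{subtracts} from the available budget for $d(u,v)$, i.e. that one cannot simultaneously make $T_u$ deep (to satisfy the degree constraint) and keep the eccentricity path on the light side long. The subtlety is that the eccentricity of $v$ could in principle be achieved \emph{through} $T_u$ rather than through $w$, so I would need to rule out (or carefully handle) the case where the maximum-eccentricity leaf lies inside $T_u$, showing that the floor/ceiling arithmetic still yields the claimed bound in every residue case and that the extremal tree is correctly identified. This case analysis, together with confirming that the extended rgood tree's height formula matches $h_u$ exactly rather than off by one, is where the real work lies.
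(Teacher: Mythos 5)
Your proposal follows essentially the same route as the paper's proof: the same centroid bound $|V(T_u)|\geq\lceil\frac{n+1}{2}\rceil$ from Proposition~\ref{prop:mid_ct}, the same eccentricity inequality $d(v,w)=\ecc_T(v)\geq d(u,v)+h_u$ obtained from Proposition~\ref{prop:mid_c}, the same count of vertices outside $T_u$ (the paper merely splits it into two inequalities, \eqref{eq:max1} and \eqref{eq:max2}, and adds them), and the same appeal to Remark~\ref{rem:k1} with the identical height computation for the extended rgood tree. The ``main obstacle'' you flag is already resolved by your own first paragraph: since $v$ is the center vertex closest to $u$, its neighbor toward $u$ is not in $C(T)$, so Proposition~\ref{prop:mid_c} yields a full-eccentricity leaf whose path from $v$ avoids the $u$-direction, exactly as in the proof of Theorem~\ref{theo:c_ct}.
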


\begin{proof}
For a fixed tree $T$, select $u\in CT(T)$ and $v\in C(T)$ such that $d(u,v) = d(CT(T), C(T))$. Assume $d(u,v)\geq 1$, otherwise there is nothing to prove. Let $T_u$ and $T_v$ name the components containing $u$ and $v$ respectively in $T - E(P(u,v))$. 

Counting the vertices in $T$, we obtain the inequality
\begin{equation}\label{eq:max1}
d(u,v)\leq n-|V(T_u)|-|V(T_v)|+1.
\end{equation}

Because $u\in CT(T)$, Proposition~\ref{prop:mid_ct} implies $ |V(T_u)| > n- |V(T_u)|$ and hence
$$ |V(T_u)| \geq \left\lceil \frac{n+1}{2} \right\rceil . $$

Set $h_u$ and $h_v$ equal to the heights of $T_u$ and $T_v$ respectively. Because $v\in C(T)$, Proposition~\ref{prop:mid_c} implies $d(u,v)+h_u \leq h_v$ and hence
\begin{equation}\label{eq:max2} 
d(u,v) \leq h_v - h_u \leq |V(T_v)|-1 -h_u.
\end{equation}
The upper bound for $d(u,v)$ is tight when $h_v = |V(T_v)|-1$, which happens exactly when $T_v$ is a path, and $h_u$ is minimum. 

By Remark~\ref{rem:k1}, the minimum $h_u$ is achieved when $T_u$ is the extended rgood tree. Since $|V(T_u)|\geq \left\lceil \frac{n+1}{2} \right\rceil$ and the maximum degree is $k$, we can determine the height of an extended rgood tree with these conditions. The extended rgood tree with maximum degree $h$ and height $h$ has at most $\sum_{i=0}^{h} (k-1)^i$ vertices. For $T_u$ with $n$ vertices, the height $h$ will be the smallest value which satisfies
\begin{align*}
|V(T_u)| & \leq \sum_{i=0}^{h} (k-1)^i \\
&= \frac{ (k-1)^{h+1}-1}{k-2} .
\end{align*}
As a result,  
\begin{align*}
h \geq \frac{\ln \left( |V(T_u)|(k-2) +1\right)}{\ln (k-1)} -1.
\end{align*}
Since $h$ is the smallest value that satisfies the above inequality and $|V(T_u)| = \left\lceil \frac{n+1}{2} \right\rceil$, we can conclude
\[h=\left\lceil\frac{ \ln\left( \lceil\frac{n+1}{2}\rceil(k-2)+1 \right)}{\ln(k-1)}\right\rceil-1.\]

Without knowing $|V(T_v)|$ exactly, we can add \eqref{eq:max1} and \eqref{eq:max2} and solve for $d(u,v)$ to obtain the desired upper bound for $d(u,v)$:
\begin{align*}
2d(u,v)  & \leq n-|V(T_u)| - h_u\\
d(u,v) & \leq \frac{1}{2}\left( n-|V(T_u)| - h_u \right)\\
    & \leq \left\lfloor \frac{1}{2}\left( n-\left\lceil\frac{n+1}{2} \right\rceil - h \right) \right\rfloor.
 \end{align*}
\end{proof}

\begin{thm}
For fixed $n,k\in \mathbb{Z}^{+}$, each tree $T$ with order $n$ and maximum degree $k$ has
\[d(Core(T), CT(T)) \leq n-n'-\left\lceil\frac{n+1}{2}\right\rceil +1\]
where $n'$ is the minimum order of an extended rgood tree $T_u$ with maximum degree $k$ such that $F_{T_u}(u) \geq n-|V(T_u)|$.
This inequality is tight for the tree formed by identifying the root of the extended rgood tree with one end of a path of appropriate length.  
\end{thm}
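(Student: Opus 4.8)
The plan is to mirror the argument of Theorem~\ref{theo:ct_core1}, replacing the star/path extremes for root-containing subtrees by their degree-bounded analogues. First I would fix $u\in Core(T)$ and $v\in CT(T)$ realizing $d(u,v)=d(Core(T),CT(T))$, assume $d(u,v)\geq 1$, and let $T_u,T_v$ be the components of $T-E(P(u,v))$ containing $u,v$, with $w$ the neighbor of $u$ on $P(u,v)$. Counting vertices gives $d(u,v)\leq n-|V(T_u)|-|V(T_v)|+1$. Since $v\in CT(T)$ and the neighbor of $v$ toward $u$ is not in the centroid, Proposition~\ref{prop:mid_ct} yields $|V(T_v)|>n-|V(T_v)|$, hence $|V(T_v)|\geq \lceil\tfrac{n+1}{2}\rceil$.

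The degree restriction enters only through $T_u$. Because $u\in Core(T)$ while $w\notin Core(T)$, Proposition~\ref{prop:mid_core} gives the strict inequality $F_{T_u}(u)>F_{T-T_u}(w)$. On the lower side, $T-T_u$ is a rooted tree of order $n-|V(T_u)|$ with root $w$, so Claim~\ref{claim:min_max_Core} (equivalently Remark~\ref{rem:k1}) gives $F_{T-T_u}(w)\geq n-|V(T_u)|$, the path being the minimizer. On the upper side, $T_u$ is rooted at $u$ with maximum degree at most $k$ and root degree at most $k-1$, since one edge at $u$ lies on $P(u,v)$; thus by Lemma~\ref{lem:kary} and Theorem~\ref{theo:kary}, $F_{T_u}(u)$ is at most the number of root-containing subtrees of the extended rgood tree of the same order, a quantity I will denote $\phi(|V(T_u)|)$. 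Combining the three relations yields $\phi(|V(T_u)|)>n-|V(T_u)|$. Since $m\mapsto \phi(m)+m$ is strictly increasing, the set of orders $m$ with $\phi(m)\geq n-m$ is an up-set whose minimum is exactly $n'$; hence $|V(T_u)|\geq n'$. Substituting both lower bounds into the vertex count produces $d(u,v)\leq n-n'-\lceil\tfrac{n+1}{2}\rceil+1$.

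For tightness I would exhibit the tree obtained by identifying the root of the extended rgood tree on $n'$ vertices (playing the role of $T_u$) with one endpoint of a path, with total order $n$; here $T_u$ has exactly $n'$ vertices and $T-T_u$ is a bare path, which is what forces equality in every inequality above. The two verifications requiring care are that $u$ is genuinely in $Core(T)$ — which holds precisely because $\phi(n')\geq n-n'$ by the defining property of $n'$, so that $F_{T_u}(u)\geq F_{T-T_u}(w)$ for the path neighbor, while the extremal structure of the rgood tree keeps $u$ dominant over its other neighbors — and that the centroid $v$ lands on the path at distance $n-n'-\lceil\tfrac{n+1}{2}\rceil+1$ from $u$, which I would confirm directly from Proposition~\ref{prop:mid_ct} by checking that $T_v$ has exactly $\lceil\tfrac{n+1}{2}\rceil$ vertices. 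I expect the main obstacle to be this extremal analysis: in the boundary case $\phi(n')=n-n'$ the vertex $w$ joins the core and could shave one off the realized distance, so the tightness argument must treat the strict versus equality alternatives in the definition of $n'$ carefully — the same bookkeeping that separated the two regimes in Theorems~\ref{theo:ct_core1} and \ref{max_deg_ct_c}.
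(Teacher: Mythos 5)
Your proposal is correct and takes essentially the same approach as the paper: the same choice of $u\in Core(T)$, $v\in CT(T)$ realizing the distance, the same decomposition into $T_u$, $T_v$, and $P(u,v)$, the same centroid bound $|V(T_v)|\geq\left\lceil\frac{n+1}{2}\right\rceil$ from Proposition~\ref{prop:mid_ct}, and the same invocation of Theorem~\ref{theo:kary} to force $|V(T_u)|\geq n'$. Your two small deviations---bounding $F_{T-T_u}(w)\geq n-|V(T_u)|$ directly from Claim~\ref{claim:min_max_Core} rather than the paper's chain $1+F_{T-T_u}(w)\geq d(u,v)+F_{T_v}(v)\geq d(u,v)+|V(T_v)|$, and spelling out the monotonicity of $m\mapsto\phi(m)+m$ that makes the definition of $n'$ yield $|V(T_u)|\geq n'$---are minor refinements that only tidy up steps the paper leaves implicit.
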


\begin{proof}
Let $u\in Core(T)$ and $v\in CT(T)$ such that $d(u,v) = d(Core(T), CT(T))$. Assume $d(u,v)\geq 1$, otherwise there is nothing to prove.
Define $T_u$ and $T_v$ to be the components of $T-E(P(u,v))$ containing $u$ and $v$ respectively. Let $h_u$ and $h_v$ be the heights of $T_u$ and $T_v$ respectively. 

Similar to before, Proposition~\ref{prop:mid_ct} implies
$$ |V(T_v)| \geq \left\lceil\frac{n+1}{2}\right\rceil. $$

By Proposition~\ref{prop:mid_core}, $u\in Core(T)$ and its neighbor $w$ on $P(u,v)$ is not in the subtree core precisely when 
\begin{align}
F_{T_u}(u) &\geq 1 + F_{T-T_u}(w) \geq d(u,v) + F_{T_v}(v) \geq d(u,v) + |V(T_v)| ,\nonumber \\
d(u,v) &\leq F_{T_u}(u) -F_{T_v}(v) \leq F_{T_u}(u) - |V(T_v)| . \label{eq:max3} 
\end{align}
The last inequality is tight if $T_v$ is a path. 

Counting the vertices in $T$, we see
\begin{align*}
n  &\geq d(u,v) + |V(T_u)| + |V(T_v)| -1, \\
 d(u,v) & \leq n- |V(T_u)| - |V(T_v)| +1 \leq n- \left\lceil\frac{n+1}{2}\right\rceil - n' +1. 
 \end{align*}
where $n'$ is the minimum number of vertices in a tree $T_u$ with maximum degree $k$ such that 
$F_{T_u}(u) \geq  d(u,v) + |V(T_v)| = n- |V(T_u)|$ as in \eqref{eq:max3}. Note that $F_{T_u}(u)$ is maximized by the extended rgood tree, giving the extremal tree in the theorem statement. 
\end{proof}

\begin{thm}
For fixed $n,k\in \mathbb{Z}^{+}$, each tree $T$ with order $n$ and maximum degree $k$ has
\[d(Core(T), C(T)) \leq n-n' - \left\lfloor \frac{1}{2}(n-n' + h') \right\rfloor\]
where $h' = \left\lceil \frac{\ln\left( n'(k-2)+1 \right)}{\ln(k-1)}\right\rceil-1$ and $n'$ is the minimum number of vertices in the extended rgood tree $T_u$ with maximum degree $k$  such that 
$F_{T_u}(u) \geq n- |V(T_u)| $.
This inequality is tight for the tree formed by identifying the root of the extended rgood tree with one end of a path of appropriate length.  
\end{thm}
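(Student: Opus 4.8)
The plan is to combine the two constraints from the previous theorems in this subsection, exactly as the earlier proofs of Theorems~\ref{theo:ct_core1} and~\ref{theo:core_ct1} combined eccentricity and subtree-count bounds. First I would fix $u\in Core(T)$ and $v\in C(T)$ with $d(u,v)=d(Core(T),C(T))$, assume $d(u,v)\ge 1$, and let $T_u,T_v$ be the components of $T-E(P(u,v))$ containing $u,v$, with $h_u,h_v$ their heights. The two governing inequalities are the same ones already derived: from $v\in C(T)$ and Proposition~\ref{prop:mid_c} I get $d(u,v)+h_u\le h_v\le |V(T_v)|-1$ (tight when $T_v$ is a path), and from $u\in Core(T)$ and Proposition~\ref{prop:mid_core} I get the subtree-core constraint $F_{T_u}(u)\ge d(u,v)+|V(T_v)|\ge d(u,v)+h_v+1 \ge n-|V(T_u)|$, mirroring~\eqref{eq:max3}.

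The central combinatorial point is identical to the previous theorem: the quantity $n'$ is the smallest order of an extended rgood tree $T_u$ (maximum degree $k$) for which $F_{T_u}(u)\ge n-|V(T_u)|$ still holds. By Theorem~\ref{theo:eric} the extended rgood tree maximizes $F_{T_u}(u)$ among all admissible $T_u$ of its order and root degree, so taking $T_u$ to be the extended rgood tree is what makes the Core-side inequality achievable with the fewest vertices spent on $T_u$; this is why $n'$ is defined via the extended rgood tree. Once $|V(T_u)|=n'$ is pinned down, the height $h_u$ equals the minimum possible height $h'$ of that structure, which by the same counting argument as in Theorem~\ref{max_deg_ct_c} — bounding the order of a degree-$k$, height-$h$ extended rgood tree by $\sum_{i=0}^{h}(k-1)^i=\frac{(k-1)^{h+1}-1}{k-2}$ — gives
\[
h' = \left\lceil \frac{\ln\left( n'(k-2)+1 \right)}{\ln(k-1)}\right\rceil-1.
\]

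Next I would perform the arithmetic that produces the stated bound. Writing vertex-counting as $d(u,v)\le n-|V(T_u)|-|V(T_v)|+1$ and combining it with the center inequality $d(u,v)\le h_v-h_u\le |V(T_v)|-1-h_u$, I add the two to eliminate $|V(T_v)|$, obtaining $2d(u,v)\le n-|V(T_u)|-h_u$, hence
\[
d(u,v)\le \left\lfloor \tfrac12\bigl(n-|V(T_u)|-h_u\bigr)\right\rfloor.
\]
Substituting the extremal values $|V(T_u)|=n'$ and $h_u=h'$ then yields $d(u,v)\le n-n'-\lfloor \tfrac12(n-n'+h')\rfloor$ after rearranging $\tfrac12(n-n')-\tfrac12 h' = (n-n')-\tfrac12(n-n'+h')$; the floor is placed to account for integrality of $d(u,v)$. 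For tightness I would exhibit the tree obtained by identifying the root of the minimal extended rgood tree (order $n'$, giving $h_u=h'$) with one endpoint of a path whose length realizes equality: the path makes $T_v$ a path so $h_v=|V(T_v)|-1$ and the center inequality is tight, while the minimal-order extended rgood tree makes the Core inequality tight with the smallest $|V(T_u)|$, so both feeding inequalities become equalities simultaneously.

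The main obstacle is not conceptual but verifying that the two tightness conditions are genuinely simultaneously satisfiable for the claimed extremal tree — that is, confirming that placing the extended rgood tree of exactly $n'$ vertices at $u$ really does keep $u$ in $Core(T)$ and $v$ in $C(T)$ with no intervening middle vertex on $P(u,v)$, so that Propositions~\ref{prop:mid_c} and~\ref{prop:mid_core} apply with equality at the endpoints. Concretely one must check that the path length can be chosen so that the centroid-side slack and the integrality floor land consistently, and that reducing $|V(T_u)|$ below $n'$ would violate $F_{T_u}(u)\ge n-|V(T_u)|$ (by definition of $n'$) while increasing it would only weaken the bound; this boundary bookkeeping, rather than any new idea, is where the care is required, and it runs parallel to the verification already sketched in the preceding theorem, so I would reference that argument and omit the repetitive details.
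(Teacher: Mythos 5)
Your proposal is correct and takes essentially the same approach as the paper: the same center inequality $d(u,v)\le h_v-h_u\le |V(T_v)|-1-h_u$, the same core-side condition defining $n'$ via the extended rgood tree (Theorem~\ref{theo:kary}), and the same height bound $h'$, the only difference being that you add the vertex-count and center inequalities to eliminate $|V(T_v)|$, whereas the paper adds the core and center inequalities and then bounds $|V(T_v)|$ from below by a half-diameter argument --- the same ingredients combined in a slightly different order. Note that your rounding gives $d(u,v)\le\left\lfloor \tfrac12(n-n'-h')\right\rfloor$, which is one unit sharper than the stated bound $n-n'-\left\lfloor \tfrac12(n-n'+h')\right\rfloor=\left\lceil \tfrac12(n-n'-h')\right\rceil$ when $n-n'-h'$ is odd, so the theorem's inequality follows a fortiori.
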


\begin{proof}
Let $u\in Core(T)$ and $v\in C(T)$ such that $d(u,v) = d(Core(T), C(T))$. 
Define $T_u$ and $T_v$ to be the components of $T-E(P(u,v))$ containing $u$ and $v$ respectively. Let $h_u$ and $h_v$ be the heights of $T_u$ and $T_v$ respectively. 

Because $u\in Core(T)$ and its neighbor on $P(u,v)$ is not in the subtree core, as in \eqref{eq:max3}, Proposition~\ref{prop:mid_core} gives 
\begin{align}
d(u,v) \leq F_{T_u}(u) - |V(T_v)|\label{eq:max3'}
\end{align}
which is tight when $T_v$ is a path. 

Because $v\in C(T)$ and its neighbor on $P(u,v)$ is not in the center, as in the proof of Theorem~\ref{max_deg_ct_c},  Proposition~\ref{prop:mid_c} gives
\[d(u,v) \leq h_v - h_u \leq |V(T_v)| - h_u - 1.\]
As in \eqref{eq:max2}, this is also tight when $T_v$ is a path. 

Adding these two inequalities together we obtain the following bound. 
$$ d(u,v) \leq \frac12 \left( F_{T_u}(u) - h_u -1 \right) .$$
The upper bound is maximum when $F_{T_u}(u)$ large and $h_u$ is small which is optimized when $T_u$ is the extended rgood tree.

If $n'$ is the number of vertices in $T_u$, then because $v\in C(T)$ and $T_v$ is a path, 
then $ecc_T(v)$ is at least half of the diameter of $T$ which translates to
\begin{align*}
 |V(T_v)| \geq \frac{1}{2}(n-n' + h_u).
 \end{align*}
 
Any tree on $n'$ vertices with maximum degree at most $k$ will have height at least the height of the corresponding extended rgood tree. As determined in the proof of Theorem~\ref{max_deg_ct_c}, 
 \begin{align*} h_u \geq \left\lceil \frac{\ln\left( n'(k-2)+1 \right)}{\ln(k-1)}\right\rceil-1:=h' .\label{h_u} \end{align*}
 In conclusion, 
\begin{align*}
d(u,v) \leq n-|V(T_u)| - |V(T_v)| \leq n-n' - \left\lfloor \frac{1}{2}(n-n' + h') \right\rfloor.
\end{align*}
Further, this upper bound is maximized when $n'$ is minimized. However, $n'$ must still satisfying the condition $F_{T_u}(u) \geq  d(u,v) + |V(T_v)| = n- |V(T_u)|$ from \eqref{eq:max3'}.
\end{proof}

\section{Different middle parts in trees with a given diameter \texorpdfstring{$D$}{D}}
\label{diam tree}
Recall that all extremal trees in Section 2 were comets. In the previous section, we explored the effect of limiting the maximum vertex degree in a tree. Here, we ask how the distances between middle parts and the corresponding extremal structures change when the diameter is limited. 
The next two propositions follow from exactly the same arguments as those for Theorem~\ref{theo:c_ct} and Theorem~\ref{theo:core_ct1} in Section~\ref{dist middle parts}, we skip the proofs.

\begin{prop}\label{theo:c_ct2}
For fixed integers $D\geq 2$ and there exists $n_0$ such that for all $n>n_0$, every tree $T$ of order $n$ and diameter at most $D$ satisfies 
$$ d(C(T), CT(T)) \leq \left\lfloor \frac{D-2}{2}\right\rfloor ,$$
which is achieved by a $D$-comet.
\end{prop}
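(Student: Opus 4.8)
The plan is to mirror the argument of Theorem~\ref{theo:c_ct} almost verbatim, but with the order constraint $n$ replaced by the diameter constraint $D$, and to verify that for $n$ large enough the comet construction actually realizes the bound. First I would fix a tree $T$ of order $n$ and diameter at most $D$, pick $v\in C(T)$ and $u\in CT(T)$ realizing $d(C(T),CT(T))$, and assume $d(u,v)\geq 1$. As in the proof of Theorem~\ref{theo:c_ct}, let $w$ be a leaf with $P(v,w)$ meeting $P(u,v)$ only at $v$ and with $d(v,w)$ maximum; since $v\in C(T)$ and the neighbor of $v$ on $P(u,v)$ lies outside $C(T)$, Proposition~\ref{prop:mid_c} gives $d(v,w)=\ecc_T(v)$, and since $u$ is not a leaf (using $n\geq 3$) we get $d(u,v)\leq \ecc_T(v)-1=d(v,w)-1$.

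The key new ingredient is to relate these distances to $D$ rather than to $n$. The path $P(w,v)\cup P(v,\dots)$ extended to a longest leaf on the $u$-side is a path in $T$, so its length is at most $D$. Concretely, let $w'$ be a leaf in $T_u$ (the component of $u$ in $T-E(P(u,v))$) at maximum distance from $u$; then $P(w,w')$ passes through $v$ and $u$, so
\begin{equation*}
D \;\geq\; d(w,w') \;=\; d(w,v)+d(v,u)+d(u,w') \;\geq\; d(w,v)+d(v,u).
\end{equation*}
Combining with $d(v,w)\geq d(u,v)+1$ yields $D\geq 2\,d(u,v)+1$, hence $d(u,v)\leq \lfloor (D-1)/2\rfloor = \lfloor (D-2)/2\rfloor$ when $D$ is even and one checks the parity to land on $\lfloor (D-2)/2\rfloor$ in general. (I would double-check the floor bookkeeping here against both parities of $D$, since this is exactly where the stated constant $\lfloor (D-2)/2\rfloor$ comes from and the eccentricity-versus-diameter slack of $1$ must be accounted for.)

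For tightness, I would exhibit the $D$-comet: take a path on $D$ vertices and attach $n-D$ pendant vertices to one endpoint. For $n$ sufficiently large (this is the role of $n_0$), the centroid $u$ is pulled toward the bushy end by the many pendants, while the center $v$ sits near the midpoint of the path; computing $\ecc$ and the component sizes via Propositions~\ref{prop:mid_c} and~\ref{prop:mid_ct} shows $d(C(T),CT(T))=\lfloor (D-2)/2\rfloor$. The threshold $n_0$ is needed precisely to guarantee that the pendant star dominates, so that the centroid sits at the branch vertex (or its path-neighbor) rather than drifting into the path; for small $n$ the centroid could coincide with the center and the distance would collapse.

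The step I expect to be the only real obstacle is the parity bookkeeping in the displayed chain $D\geq 2d(u,v)+1$: depending on whether $D$ is even or odd, and on whether $\ecc_T(v)$ is attained with equality or with the one-off slack allowed by the corollary to Proposition~\ref{prop:mid_c}, the extracted bound is either $\lfloor(D-1)/2\rfloor$ or $\lfloor(D-2)/2\rfloor$, and I must confirm the tighter value holds uniformly. Since the paper explicitly defers the proof as ``exactly the same arguments,'' I would present this as a short remark rather than a full derivation, pointing to the corresponding inequalities \eqref{C_CT_ineq1}–\eqref{C_CT_ineq3} in Theorem~\ref{theo:c_ct} with $n$ replaced by a diameter count and noting that the existence of $n_0$ is what replaces the exact order-based extremal analysis.
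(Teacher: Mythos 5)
Your strategy is the right one and is essentially the argument the paper intends (it defers the proof to ``exactly the same arguments'' as Theorem~\ref{theo:c_ct}), but as written it has a genuine gap for odd $D$: the chain you display, $D \geq d(w,v)+d(v,u)$ combined with $d(v,w)\geq d(u,v)+1$, only yields $d(u,v)\leq\left\lfloor\frac{D-1}{2}\right\rfloor$, which exceeds the claimed bound $\left\lfloor\frac{D-2}{2}\right\rfloor$ by one whenever $D$ is odd. No amount of parity bookkeeping on that weakened inequality can recover the stated bound, and you flag exactly this obstacle without resolving it.

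The fix is precisely the term you discarded. Since $u\in CT(T)$ and the neighbor of $u$ on $P(u,v)$ is not in $CT(T)$, Proposition~\ref{prop:mid_ct} gives $|V(T_u)|>n-|V(T_u)|$, so $|V(T_u)|>n/2>1$ for $n\geq 3$; hence $u$ has a neighbor inside $T_u$ and your leaf $w'\in T_u$ satisfies $d(u,w')\geq 1$. Keeping this term,
$$ D \;\geq\; d(w,v)+d(v,u)+d(u,w') \;\geq\; \bigl(d(u,v)+1\bigr)+d(u,v)+1 \;=\; 2d(u,v)+2, $$
so $d(u,v)\leq \frac{D-2}{2}$ and therefore $d(u,v)\leq\left\lfloor\frac{D-2}{2}\right\rfloor$ uniformly in the parity of $D$. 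Note that the same fact --- $u$ is not a leaf, which holds because a centroid vertex of a tree on at least $3$ vertices is never a leaf --- is what you already invoked to get $d(u,v)\leq \ecc_T(v)-1$; you simply need to use it a second time on the $T_u$ side rather than dropping $d(u,w')$. Your tightness analysis is fine: for the $D$-comet, the center sits at $v_{D/2}$ (or $\{v_{(D-1)/2},v_{(D+1)/2}\}$ for odd $D$) along the path and the centroid sits at the branch vertex once $n$ is large (roughly $n\geq 2D-2$), giving distance exactly $\left\lfloor\frac{D-2}{2}\right\rfloor$; this is indeed the role of $n_0$.
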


\begin{prop}\label{theo:c_core2}
For fixed $D\geq 2$, there exists $n_0$ such that for all $n>n_0$, every tree $T$ of order $n$ and diameter at most $D$ satisfies $$ d(C(T), Core(T)) \leq  \left\lfloor \frac{D-2}{2}\right\rfloor ,$$
which is achieved by a $D$-comet.
\end{prop}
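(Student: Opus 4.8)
The plan is to mirror the proof of Theorem~\ref{theo:core_ct1}, replacing the vertex count $n$ (which there served as the ``budget'' bounding path lengths) by the diameter $D$. Fix a tree $T$ of order $n$ with $\diam(T)\le D$, and choose $u\in Core(T)$ and $v\in C(T)$ with $d(u,v)=d(C(T),Core(T))$; we may assume $d(u,v)\ge 1$. As in Section~\ref{dist middle parts}, minimality of $d(u,v)$ forces the neighbor of $v$ on $P(u,v)$ to lie outside $C(T)$, since otherwise it would be a center vertex strictly closer to $u$. Hence Proposition~\ref{prop:mid_c} supplies a leaf $w$ in the component $T_v$ of $T-E(P(u,v))$ containing $v$ with $d(v,w)=\ecc_T(v)$. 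Let $T_u$ be the component containing $u$ and let $h_u$ be its height; because $n\ge 3$ the subtree core $u$ is never a leaf, so $h_u\ge 1$ and $T_u$ contains a vertex $z$ with $d(u,z)=h_u$.

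The key observation --- and the one point where the argument must improve on a naive transcription --- is that the path running from $z$ through $u$ and $v$ to $w$ witnesses $h_u+d(u,v)+d(v,w)\le \diam(T)\le D$. Since $\ecc_T(v)\ge d(v,z)=d(u,v)+h_u\ge d(u,v)+1$, I would combine these two facts as
\begin{equation*}
2\,d(u,v)+1 \;\le\; d(u,v)+d(v,w) \;\le\; D-h_u \;\le\; D-1,
\end{equation*}
which, as $d(u,v)$ is an integer, yields $d(u,v)\le \left\lfloor \frac{D-2}{2}\right\rfloor$. I expect the main subtlety to be exactly this parity bookkeeping: dropping the height term and using only $d(u,v)+d(v,w)\le D$ would give $d(u,v)\le \frac{D-1}{2}$, which is off by one for odd $D$. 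Carrying the extra $h_u\ge 1$ (equivalently, recording that the core's branch $T_u$ is nontrivial) is what tightens the diameter budget to $D-1$ and produces the correct floor in both parities. Note that this upper bound needs no lower threshold on $n$; it holds for every tree of order $n\ge 3$ and diameter at most $D$.

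For tightness I would verify that the $D$-comet attains equality once $n$ is large. Its center sits at the midpoint $p_{\lceil D/2\rceil}$ of the length-$D$ path --- a single vertex when $D$ is even and an adjacent pair when $D$ is odd --- while its subtree core is the high-degree end vertex $p_1$ for large $n$: a short computation with the product formula of Claim~\ref{claim:min_max_Core} gives $F_T(p_1)-F_T(p_2)=2^{\,n-D}-(D-1)>0$ as soon as $n-D>\log_2(D-1)$. Taking $v$ to be the center vertex nearer $p_1$ then gives $d(C(T),Core(T))=\lceil D/2\rceil-1$ for even $D$ and $(D-3)/2$ for odd $D$, i.e.\ exactly $\left\lfloor \frac{D-2}{2}\right\rfloor$, with $T_u$ the pendant star so that $h_u=1$ confirms sharpness. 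This also identifies the threshold $n_0$: it is the least $n$ guaranteeing that the core lands at $p_1$, roughly $n_0 = D+\lceil \log_2(D-1)\rceil$, and its sole purpose is to ensure the comet realizes the bound established above.
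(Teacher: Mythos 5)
Your proof is correct and takes exactly the route the paper intends: the paper skips this proof, saying it follows from the same arguments as Theorems~\ref{theo:c_ct} and~\ref{theo:core_ct1}, and your argument is precisely that adaptation, with the diameter replacing the order as the budget and the non-leaf core vertex (so $h_u\ge 1$ for $T_u$) supplying the extra unit that tightens $\left\lfloor \frac{D-1}{2}\right\rfloor$ to $\left\lfloor \frac{D-2}{2}\right\rfloor$. Your verification that the $D$-comet attains the bound once $2^{n-D}>D-1$ (core at the high-degree end, center near the path midpoint) correctly supplies the tightness claim and the threshold $n_0$ that the paper leaves implicit.
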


Finding the maximum of $d(CT(T),Core(T))$ is unexpectedly more difficult. Notwithstanding the progress in the remaining part of this paper, it remains unsolved.
Fix integers $D\geq 2$ and $n\geq 1$. Among all trees with diameter at most $D$ and order $n$, fix a tree $T$ which realizes the maximum value for $d(CT(T), Core(T))$. 

Select vertices $u\in Core(T)$  and $v\in CT(T)$  such that the graph distance between $u$ and $v$ is precisely $d(CT(T), Core(T))$. We assume $d(u,v) \geq 1$, otherwise there is nothing to prove. 
In $T- E(P(u,v))$, let $T_u$ name the component containing $u$ while $T_v$ is the component containing $v$. Consider $u$ to be the root of $T_u$ and $v$ to be the root of $T_v$. 

Let $w$ be the neighbor of $u$ on $P(u,v)$. Because $u\in Core(T)$ and $w \not\in Core(T)$, Proposition~\ref{prop:mid_core} implies
\[ F_{T_u}(u) < F_{T-T_u}(w).\]

Because $v\in CT(T)$ and its neighbor on $P(u,v)$ is not in $CT(T)$, Proposition~\ref{prop:mid_ct} implies
\[ |V(T_v)| > n- |V(T_v)|.\]
 
Suppose $T_u$ is not a star. Create a new tree $T'$ from $T$ by replacing $T_u$ with a star $T'_u$ which is rooted at $u$ and has the same order as $T_u$.
Using the convention that $T'_u$ and $T'_v$ are the components containing $u$ and $v$ respectively in $T' - E(P(u,v))$, we see that $T'_v$ is isomorphic to $T_v$.
 First observe that
\[F_{T'_u}(u) \geq F_{T_u}(u) > F_{T-T_u}(w) = F_{T'-T'_u}(w)\]
which implies $w\not\in Core(T')$ by Proposition~\ref{prop:mid_core}.  
Further,  
\[|V(T'_v)| = |V(T_v)|  > n-|V(T_v)| = n-|V(T'_v)|\]
which implies the neighbor of $v$ on $P(u,v)$ is not in the centroid of $T'$ by Proposition~\ref{prop:mid_ct}. Therefore \[d(Core(T'), CT(T')) \geq d_{T'}(u,v) = d_T(u,v) = d(Core(T), CT(T)).\] By the choice of $T$, $d(Core(T'), CT(T')) = d(Core(T), CT(T))$. So $T'$ is also a tree with diameter at most $D$ and order $n$ which maximizes $d(Core(T), CT(T))$. 

Now consider the structure of $T'_v$ in $T'$. Say $T'_v$ has $x$ vertices and height $h$. Suppose $T'_v$ does not minimize the number of subtrees containing $v$ for its height and order. Let $T''_v$ be a tree rooted at $v$ with height at most $h$ and order $x$ which minimizes $F_{T''_v}(v)$. Define $T''$ to be the tree created from $T'$ by replacing $T'_v$ with $T''_v$. Observe that 
\[F_{T''_u}(u) = F_{T'_u}(u) > F_{T'-T'_u}(w) > F_{T''-T''_u}(w)\]
which implies $w\not\in Core(T'')$ by Proposition~\ref{prop:mid_core}.
Further,  for $T''_v$ being the component of $T''- E(P(u,v))$ which contains $v$,
\[|V(T''_v)| = |V(T'_v)|  > n-|V(T'_v)| = n-|V(T''_v)|.\]
This implies, by Proposition~\ref{prop:mid_ct}, that the neighbor of $v$ on $P(u,v)$ in $T''$ is not in $CT(T'')$ and \[d(Core(T''), CT(T'')) \geq d_{T''}(u,v) = d_T(u,v) = d(Core(T), CT(T)).\] By the choice of $T$, $d(Core(T''), CT(T''))  = d(Core(T), CT(T))$ which implies $T''$ is also a tree with diameter at most $D$ and order $n$ that maximizes the distance between the subtree core and the centroid. 

\begin{rem}
Fix $n,D\in \mathbb{Z}^{+}$. Among all trees with diameter at most $D$ and order $n$, one such tree $T$ which maximizes $d(Core(T), CT(T))$ has $T_u$ being a star rooted at $u$ and $T_v$ a tree which minimizes the number of subtrees containing $v$ for its height and order. This structure $T$ is drawn in Figure~\ref{diam_CT_Core}.
\end{rem}

 In Section~\ref{sec: Min_subtrees}, we take a closer look at the structure of $T_v$, a tree which minimizes the number of subtrees containing $v$ for its height and order. While we determine many necessary properties of $T_v$, characterizing the exact structure is still an open problem.

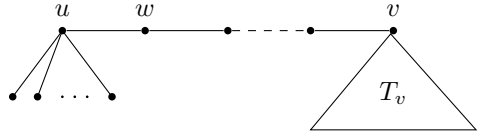
\begin{figure}[htbp]
\centering
\begin{tabular}{c}
    \begin{tikzpicture}[scale=1.1]
        \node[fill=black,circle,inner sep=1pt] (t2) at (1,1) {};
        \node[fill=black,circle,inner sep=1pt] (t3) at (2,1) {};
        \node[fill=black,circle,inner sep=1pt] (t4) at (3,1) {};
        \node[fill=black,circle,inner sep=1pt] (t5) at (4,1) {};
        \node[fill=black,circle,inner sep=1pt] (t6) at (5,1) {};

        \draw (t5)--(t6);
        \draw [dashed] (t4)--(t5);
        \draw (t3)--(t4);
        \draw (t2)--(t3);
        
        \foreach \x in {-2,-1,2} \node [fill=black,circle,inner sep=1pt] at (1+\x*.3,0.2) {};
        \foreach \x in {-2,-1,2} \draw (1+\x*.3,0.2)--(1,1);
        \foreach \x in {0,.5,1} \node [fill=black,circle,inner sep=.3pt] at (1+\x*.3,0.2) {};
        \draw (t6)--(4,-.2)--(6,-.2)--cycle;

        \node at (5,1.25) {$v$};
        \node at (2,1.25) {$w$};
        \node at (1,1.25) {$u$};
        
        \node at (5,.25) {$T_v$};
       
        \node at (-1.5,1) {};
        \node at (7.5,1) {};
        \end{tikzpicture}
        \end{tabular}
\caption[The structure of a tree $T$ with diameter $D$ and order $n$ which maximize $d(Core(T), CT(T))$.]{The structure of a tree $T$ with diameter $D$ and order $n$ which maximize $d(Core(T), CT(T))$. Here, $u\in Core(T)$, $v\in CT(T)$, and $T_v$ minimizes the number of subtrees containing $v$ for its order and height.}
\label{diam_CT_Core}
\end{figure}

\section{Rooted trees of given order and height}
\label{sec: Min_subtrees}

For any $n,h\in \mathbb{Z}^{+}$, this section is devoted to characterizing the rooted trees with $n$ vertices and height at most $h$ which have the minimum number of root-containing subtrees. For the remainder of this section, we will call these trees \emph{optimal}.

To standardize some notation, we restrict our attention to trees $T$ which are rooted at root $\rho$, have order $n$ and height at most $h$ unless mentioned otherwise. Note that $h(T)=ecc_T(\rho)$. The degree of a vertex $v$ will be denoted $deg(v)$. 

For any $v\in V(T)$, let $T(v)$ denote the subtree induced by $v$ and all of its descendants. We will view $T(v)$ as a tree rooted at $v$. For each neighbor $v_i$ of $\rho$, set $T_i:=T(v_i)$.  
For $f \in \{0,1, \ldots, h-1\}$, define the \textit{$f$-split} to be the tree rooted at $w_1$ with $h+f$ vertices, constructed 
from paths $P_1=(w_1, w_2, \ldots, w_h)$, and $P_2=(u_1, u_2, \ldots, u_f)$ by adding the edge $u_1w_{h-f}$. In other words, the midpoint of path on $2f$ edges is joined to the root $w_1$ by an $(h-f-1)$-edge path. Our main results from this section are summarized in the following theorem.

\begin{thm}
For positive integers $n$ and $h$, there is an optimal tree $T$ with $n$ vertices and height at most $h$ such that each $T_i$ is a $k_i$-split with $k_1 \geq k_2 \geq \ldots \geq k_r$ (see Figure~\ref{fig:ex1_n}). Further, the tuple $(k_1, k_2, \ldots, k_r)$ is described by one of the following three types: 
\begin{enumerate}
\item[(i)] (Paths) $k_r=0$, $k_{r-1}\in \{0,1\}$, $k_{r-2} = \ldots = k_1 =1$.
\item[(ii)] (One large) $k_1 > \left \lceil \sqrt{h+\frac{5}{4}}-\frac{1}{2}\right\rceil$ and $k_i = \left\lfloor \frac{h+1}{k_1+1} \right\rfloor$ for each $i\in \{2,\ldots, r\}$ provided $\left\lfloor \frac{h+1}{k_1+1} \right\rfloor \geq \frac{h+1}{k_1}-1$.
Further, if $n>5h^2$, then $k_i\leq \ln(6h)$ for each $i\in \{2,\ldots, r\}$.
\item[(iii)] (Even distribution) $k_1 \leq \left \lceil \sqrt{h+\frac{5}{4}}-\frac{1}{2}\right\rceil$ and for all $i,j\in [r]$, $|k_i - k_j| \leq 1$.
Further, if $n>5h^2$, then $k_i\leq \ln(6h)$ for each $i\in \{2,\ldots, r\}$.
\end{enumerate}
\label{thm:optimal_summary}
\end{thm}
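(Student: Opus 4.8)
The plan is to analyze the local structure of an optimal tree by examining how the root's subtrees $T_1, \ldots, T_r$ must look, then show that each one can be replaced by a split without increasing the number of root-containing subtrees. Recall that for a tree rooted at $\rho$ with children $v_1, \ldots, v_r$, we have the product formula $F_T(\rho) = \prod_{i=1}^r (1 + F_{T_i}(v_i))$. Minimizing $F_T(\rho)$ therefore amounts to distributing the $n-1$ non-root vertices among the $T_i$ (subject to the height-at-most-$h$ constraint, which forces each $T_i$ to have height at most $h-1$) so that the product of the factors $1 + F_{T_i}(v_i)$ is minimized. First I would establish that in each subtree $T_i$, the optimal local structure is itself a ``minimizer of root-containing subtrees for given order and height,'' so the problem is self-similar; this reduces to understanding the one-dimensional trade-off between spending vertices to reduce $F$ per subtree versus spreading them across more subtrees.

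The next step is to prove that an optimal $T_i$ of given order $m_i$ and height bound $h-1$ is a $k_i$-split. The intuition is that a path minimizes root-containing subtrees for a fixed order (Claim~\ref{claim:min_max_Core}), so as long as the height budget permits we should make $T_i$ a path. Once the path would exceed the allowed height, the cheapest way to absorb additional vertices while respecting the height cap is to hang a second path off the far portion of the first, creating exactly the $f$-split structure: a length-$(h-f-1)$ stem leading to the midpoint of a path on $2f$ edges. I would verify by a direct comparison (an exchange/shifting argument on where extra vertices are attached, computing $F$ via the product formula at the branch vertex) that the split beats any competing placement of those vertices. This gives the reduction to tuples $(k_1, \ldots, k_r)$ of split-parameters, which I would order $k_1 \geq \cdots \geq k_r$ without loss of generality.

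With the splits in hand, the remaining task is the global optimization: choosing the tuple $(k_1, \ldots, k_r)$ so that $\prod_i (1 + F_{(k_i\text{-split})})$ is minimized subject to the total vertex count equalling $n-1$. Here I would compute $F$ for a $k$-split explicitly as a function of $k$ and $h$ (it should be roughly linear in $h$ plus a term growing in $k$), and then study the discrete optimization of the product under a fixed sum of sizes. The crux is a convexity/smoothing argument: I expect that either one split should be taken very large while the rest are driven down to a common small value (the ``one large'' regime, type (ii)), or all splits should be nearly equal (the ``even distribution'' regime, type (iii)), with the threshold determined by when marginally enlarging the biggest split pays off relative to spreading vertices. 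The boundary value $\left\lceil \sqrt{h+\frac{5}{4}}-\frac{1}{2}\right\rceil$ should emerge by solving the equation that equates the marginal cost of growing $k_1$ against splitting off a new subtree, and the side conditions $\left\lfloor \frac{h+1}{k_1+1}\right\rfloor \geq \frac{h+1}{k_1}-1$ and $k_i \leq \ln(6h)$ for $n > 5h^2$ should fall out of the same marginal analysis applied in the large-$n$ regime. The paths case (i) handles small $n$ where the height cap never binds and everything is essentially a collection of short paths.

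The main obstacle will be the global product-minimization step, specifically proving the dichotomy between the ``one large'' and ``even distribution'' regimes and pinning down the exact threshold $\left\lceil \sqrt{h+\frac{5}{4}}-\frac{1}{2}\right\rceil$. Unlike sum-minimization, where convexity gives a clean ``spread evenly'' answer, here we are minimizing a product of terms each of which is itself a somewhat irregular function of the split parameter (because $F$ of a split is not smooth in $k$ — the split structure changes qualitatively as $k$ approaches $h$). I anticipate needing careful case analysis and a smoothing lemma of the form ``if two splits differ by more than one and neither is the designated large one, then equalizing them decreases the product,'' paired with a separate argument showing that concentrating vertices in a single large split can beat equalization once that split passes the threshold. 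The large-$n$ refinements ($k_i \leq \ln(6h)$) are secondary and should follow from crude estimates once the main structure is established.
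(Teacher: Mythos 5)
Your overall skeleton (self-similarity of optimal subtrees, reduction of each branch to a split, then a smoothing argument on the tuple of split parameters) is the same as the paper's, and your step 1 is exactly its Lemma~\ref{lem:sub1}, while your step 3 matches its pairwise-exchange analysis (Lemmas~\ref{lem:sub5}--\ref{lem:sub7} and their corollaries, built on the explicit count $s_h(k)=h+k^2+k+1$ for a $k$-split, with the threshold $\sqrt{h+5/4}-1/2$ arising from comparing $k_i(1+k_j)$ with $h+1$). The genuine gap is in your step 2. The statement you propose to prove there --- ``the minimizer of root-containing subtrees among rooted trees of order $m_i$ and height at most $h-1$ is a $k_i$-split'' --- is false whenever $m_i>2(h-1)$, since no split of that order exists and the true minimizer for such parameters is itself a multi-branch tree; and nothing in your proposal bounds the orders of the branches $T_i$ (that bound is part of what must be proven). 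Even restricted to small $m_i$, your statement is an instance of the theorem itself applied to smaller parameters, so deriving ``each $T_i$ is optimal for its order and height, hence a split'' from self-similarity is circular unless you run an induction whose inductive step contains precisely the arguments you have not supplied: you must rule out a branch whose root $v_i$ has two or more children each carrying further structure, a branch containing two nested branch vertices, and non-root vertices of degree four or more. None of these configurations is excluded by optimality of $T_i$ for its own order and height; they are excluded only by exchanges performed inside the whole tree $T$.

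Concretely, the paper closes this gap with three whole-tree exchange lemmas that your single sentence about ``an exchange/shifting argument on where extra vertices are attached'' does not identify: every leaf of an optimal tree sits at height exactly $h$ (Lemma~\ref{lem:sub2}, proved by re-hanging a pendant path); every non-root vertex has degree at most $3$, up to an exceptional degree-$4$ configuration at height $h-1$ that can be exchanged away (Lemma~\ref{lem:sub3}); and each branch $T_i\cup\{\rho\}$ contains at most one non-root vertex of degree $3$, again up to exchangeable exceptions (Lemma~\ref{lem:sub4}, whose proof uses the leaf-height lemma to bound $|V(T(y))|$ from below). Only after Observation~\ref{T_i split} is each $T_i$ known to be a split, and only then does the problem become the discrete product minimization you describe in step 3. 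A secondary underestimate: the $k_i\le\ln(6h)$ refinement for $n>5h^2$ is not a ``crude estimate'' consequence; it needs an exchange that simultaneously shrinks many equal splits while increasing the root degree by one, and the comparison of $(h+x^2+x+1)^{h+x-1}$ against $(h+x^2-x+1)^{h+x}$ (Lemma~\ref{lem:technical_ki}) is where the logarithmic bound actually comes from. So your plan would need the three structural lemmas added to step 2 before the rest of it can run.
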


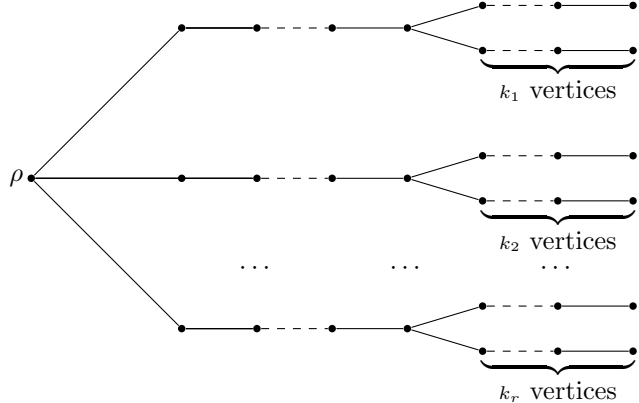
\begin{figure}[htbp]
\centering
\begin{tabular}{c}
    \begin{tikzpicture}[scale=1]
        \node[fill=black,circle,inner sep=1pt] (t1) at (0,4) {};
        \node[fill=black,circle,inner sep=1pt] (t2) at (2,4) {};
        \node[fill=black,circle,inner sep=1pt] (t3) at (3,4) {};
        \node[fill=black,circle,inner sep=1pt] (t4) at (4,4) {};
        \node[fill=black,circle,inner sep=1pt] (t5) at (5,4) {};

        \node[fill=black,circle,inner sep=1pt] (t6) at (7-1,3.7) {};
        \node[fill=black,circle,inner sep=1pt] (t7) at (8-1,3.7) {};
        \node[fill=black,circle,inner sep=1pt] (t8) at (9-1,3.7) {};

        \node[fill=black,circle,inner sep=1pt] (t9) at (7-1,4.3) {};
        \node[fill=black,circle,inner sep=1pt] (t10) at (8-1,4.3) {};
        \node[fill=black,circle,inner sep=1pt] (t101) at (9-1,4.3) {};

        \node[fill=black,circle,inner sep=1pt] (t11) at (2,6) {};
        \node[fill=black,circle,inner sep=1pt] (t12) at (3,6) {};
        \node[fill=black,circle,inner sep=1pt] (t13) at (4,6) {};
        \node[fill=black,circle,inner sep=1pt] (t14) at (5,6) {};

        \node[fill=black,circle,inner sep=1pt] (t15) at (7-1,5.7) {};
        \node[fill=black,circle,inner sep=1pt] (t16) at (8-1,5.7) {};
        \node[fill=black,circle,inner sep=1pt] (t17) at (9-1,5.7) {};

        \node[fill=black,circle,inner sep=1pt] (t18) at (7-1,6.3) {};
        \node[fill=black,circle,inner sep=1pt] (t19) at (8-1,6.3) {};
        \node[fill=black,circle,inner sep=1pt] (t110) at (9-1,6.3) {};

        \node[fill=black,circle,inner sep=1pt] (t21) at (2,2) {};
        \node[fill=black,circle,inner sep=1pt] (t22) at (3,2) {};
        \node[fill=black,circle,inner sep=1pt] (t23) at (4,2) {};
        \node[fill=black,circle,inner sep=1pt] (t24) at (5,2) {};

        \node[fill=black,circle,inner sep=1pt] (t25) at (7-1,1.7) {};
        \node[fill=black,circle,inner sep=1pt] (t26) at (8-1,1.7) {};
        \node[fill=black,circle,inner sep=1pt] (t27) at (9-1,1.7) {};

        \node[fill=black,circle,inner sep=1pt] (t28) at (7-1,2.3) {};
        \node[fill=black,circle,inner sep=1pt] (t29) at (8-1,2.3) {};
        \node[fill=black,circle,inner sep=1pt] (t210) at (9-1,2.3) {};

        \draw (t1)--(t11);
        \draw (t1)--(t21);

        \draw (t1)--(t3);
        \draw (t11)--(t12);
        \draw (t21)--(t22);

        \draw (t4)--(t5);
        \draw (t13)--(t14);
        \draw (t23)--(t24);

        \draw (t5)--(t6);
        \draw (t14)--(t15);
        \draw (t24)--(t25);
        \draw (t5)--(t9);
        \draw (t14)--(t18);
        \draw (t24)--(t28);

        \draw [dashed] (t6)--(t7);
        \draw [dashed] (t15)--(t16);
        \draw [dashed] (t25)--(t26);
        \draw [dashed] (t9)--(t10);
        \draw [dashed] (t18)--(t19);
        \draw [dashed] (t28)--(t29);

        \draw (t1)--(t3);
        \draw (t11)--(t12);
        \draw (t21)--(t22);
        \draw (t1)--(t3);
        \draw (t11)--(t12);
        \draw (t21)--(t22);
 
        \draw [dashed] (t3)--(t4);
        \draw [dashed] (t12)--(t13);
        \draw [dashed] (t22)--(t23);

        \draw (t7)--(t8);
        \draw (t10)--(t101);
        \draw (t26)--(t27);
        \draw (t29)--(t210);
        \draw (t16)--(t17);
        \draw (t19)--(t110);

        \node at (-0.2,4) {$\rho$};
        \node at (8-1,1.3) {$\underbrace{\hspace{5.8 em}}_{k_r \hbox{ vertices}}$};
        \node at (8-1,3.3) {$\underbrace{\hspace{5.8 em}}_{k_2 \hbox{ vertices}}$};
        \node at (8-1,5.3) {$\underbrace{\hspace{5.8 em}}_{k_1 \hbox{ vertices}}$};

        \node at (3,2.8) {$\cdots$};
        \node at (5,2.8) {$\cdots$};    
        \node at (7,2.8) {$\cdots$};

        \end{tikzpicture}
        \end{tabular}
\caption{The structure of a tree $T$ with height $h$ and order $n$ which minimizes the number of root-containing subtrees.}\label{fig:ex1_n}
\end{figure} 

Here we present several lemmas regarding the characteristics of an optimal tree. These all work toward the proof of Theorem~\ref{thm:optimal_summary}.

\begin{lemma}\label{lem:sub1}
In any optimal tree $T$, for any $v\in V(T)$, $T(v)$ minimizes the number of root-containing subtrees among  all rooted trees of the same order and height at most $h-h_T(v)$.
\end{lemma}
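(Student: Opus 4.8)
The plan is to prove this by a straightforward \emph{exchange argument}, showing that if some subtree $T(v)$ failed to be optimal for its own order and height restriction, we could replace it with a better rooted tree and strictly decrease the total number of root-containing subtrees of $T$, contradicting the optimality of $T$. First I would fix an optimal tree $T$ and an arbitrary vertex $v$. Let $m := |V(T(v))|$ and observe that the height of $T(v)$, rooted at $v$, is at most $h - h_T(v)$, since any descendant of $v$ lies at height at most $h$ in $T$ and at distance $h_T(v)$ from $\rho$. Suppose for contradiction that $T(v)$ does \emph{not} minimize the number of root-containing subtrees among rooted trees of order $m$ and height at most $h - h_T(v)$; let $S$ be a rooted tree (root $s$) of order $m$ and height at most $h - h_T(v)$ with $F_S(s) < F_{T(v)}(v)$. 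Form $T'$ from $T$ by deleting $T(v)$ and attaching $S$ in its place, identifying $s$ with the position of $v$ (i.e. joining $s$ to the parent of $v$).

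The key step is to verify that $T'$ is still a legitimate competitor — it has order $n$ and height at most $h$. The order is unchanged because $|V(S)| = |V(T(v))| = m$. For the height bound, every vertex of $S$ sits at height at most $h_T(v) + (h - h_T(v)) = h$ from $\rho$, since $s$ is placed at distance $h_T(v)$ from the root and $S$ has height at most $h - h_T(v)$; the heights of all vertices outside $T(v)$ are untouched. Hence $T'$ is a rooted tree of order $n$ and height at most $h$, so it is an admissible competitor in the optimization defining optimality.

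The heart of the argument is a \emph{counting identity} showing that replacing $T(v)$ by $S$ changes $F_T(\rho)$ by a positive multiplicative factor times the decrease $F_{T(v)}(v) - F_S(s)$. The root-containing subtrees of $T$ split according to whether they contain $v$ or not. Writing $A$ for the number of subtrees of $T$ containing $\rho$ but not $v$, and noting that every subtree containing both $\rho$ and $v$ is obtained by choosing a root-$\rho$-containing subtree of $T$ that reaches the parent of $v$ (there are, say, $B$ choices for the ``upper part'' that includes the parent of $v$) together with a root-containing subtree of $T(v)$, we get the clean factorization
\begin{align*}
F_T(\rho) &= A + B\cdot F_{T(v)}(v), \\
F_{T'}(\rho) &= A + B\cdot F_{S}(s),
\end{align*}
where $A$ and $B$ depend only on the part of the tree outside $T(v)$ and are therefore identical in $T$ and $T'$. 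Here $B \geq 1$ since the single-vertex upper part $\{\rho\}\cup P(\rho, \mathrm{parent}(v))$ always qualifies when $v\neq\rho$; the case $v=\rho$ is trivial since then $T(v)=T$. Subtracting gives $F_T(\rho) - F_{T'}(\rho) = B\,(F_{T(v)}(v) - F_S(s)) > 0$, so $T'$ has strictly fewer root-containing subtrees than $T$, contradicting optimality.

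The main obstacle, and the only part requiring genuine care, is nailing down the factorization above — specifically, making precise that the ``upper'' contribution $B$ really is the same constant in $T$ and $T'$ and is independent of the internal structure of $T(v)$. The cleanest way to see this is to observe that a subtree of $T$ containing $\rho$ and $v$ must contain the entire path $P(\rho, v)$ (subtrees are connected), so it decomposes uniquely into its intersection with $V(T)\setminus V(T(v))$ (a subtree containing both $\rho$ and the parent of $v$) and its intersection with $V(T(v))$ (a subtree containing $v$), with the only interaction being the fixed edge from the parent of $v$ to $v$; these two pieces can be chosen independently, yielding the product $B\cdot F_{T(v)}(v)$. Since $B$ counts configurations entirely outside $T(v)$, it is unaffected by the swap, and the argument closes.
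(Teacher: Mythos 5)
Your proof is correct and follows essentially the same exchange argument as the paper: replace $T(v)$ by a better rooted tree of the same order and height bound, and factor the root-containing subtrees through $v$ so that the change equals a fixed positive multiplier (your $B$, the paper's $F_{T^*}(\rho,v)$ where $T^*:=T-(T(v)-\{v\})$) times $F_{T(v)}(v)-F_S(s)>0$, contradicting optimality. Your explicit verification of the height constraint and the $v=\rho$ case are details the paper leaves implicit, but the substance is identical.
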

 
\begin{proof} 
Let $T$ be an optimal tree. 
Suppose, for contradiction, that there is a vertex $v$ for which $T(v)$ does not satisfy the lemma. In other words, there is a tree $T'(v)$, which is rooted at $v$, has the same order as $T(v)$, and has
$$ h(T'(v)) \leq h-h_T(v) \hbox{ and } F_{T'(v)}(v) < F_{T(v)}(v) . $$
Let $T'$ be the tree obtained from $T$ by replacing $T(v)$ with $T'(v)$. Then $T$ and $T'$ have the same number of subtrees containing $\rho$ but not $v$. Define $T^*:=T-(T(v)- \{v\})$ and let $F_{T^*}(\rho,v)$ be the number of subtrees of $T^*$ that contain both $\rho$ and $v$. Because $T$ and $T'$ only differ in the descendants of $v$, we have 
$$F_{T}(\rho)-F_{T'}(\rho)=F_{T(v)}(v)F_{T^*}(\rho,v)-F_{T'(v)}(v)F_{T^*}(\rho,v) > 0, $$ a contradiction to the optimality of $T$.
\end{proof}

\begin{lemma}\label{lem:sub2}
The height of any leaf in an optimal tree is $h$.
\end{lemma}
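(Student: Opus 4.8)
The plan is to prove Lemma~\ref{lem:sub2} by contradiction, showing that a leaf at height strictly less than $h$ can always be relocated to strictly decrease the number of root-containing subtrees, contradicting optimality. Suppose $T$ is optimal but has a leaf $\ell$ at height $h_T(\ell) < h$. The key observation is that moving $\ell$ to hang off a vertex of maximal height (or otherwise deepening the tree) will \emph{reduce} $F_T(\rho)$, since root-containing subtrees are counted multiplicatively through the branch decomposition $F_T(\rho) = \prod_i (1 + F_{T_i}(v_i))$ from Claim~\ref{claim:min_max_Core}, and a ``flatter'' placement of a vertex tends to create more subtrees (recall the path minimizes, the star maximizes, among trees of fixed order).

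First I would fix a leaf $\ell$ with $h_T(\ell) < h$ and let $p$ be its parent. I would identify a leaf $w$ of maximum height in $T$; since $h(T) \le h$ and we want to use all the available height, the natural move is to lengthen a deepest branch. The cleanest operation is to delete $\ell$ from its current position and reattach it as a new child extending a deepest leaf, producing a tree $T'$ of the same order with $h(T') \le h$. I would then compare $F_{T'}(\rho)$ with $F_T(\rho)$ using the product formula and Lemma~\ref{lem:sub1}, which guarantees that each $T(v)$ is itself optimal for its order and height budget; this local optimality lets me localize the comparison to the two branches affected by the move. The heart of the estimate is that attaching a pendant vertex deeper in the tree contributes a smaller multiplicative factor than attaching it higher up, because a vertex closer to the leaves lies in fewer root-containing subtrees.

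The main obstacle I anticipate is making the comparison clean when the relocation interacts nontrivially with the branch structure — in particular ensuring the height constraint $h(T') \le h$ is not violated and that the move strictly decreases the count rather than merely not increasing it. To handle this rigorously I would argue as follows: if the leaf $\ell$ sits at a vertex $v$ whose subtree $T(v)$ has height less than its allotted budget $h - h_T(v)$, then by Lemma~\ref{lem:sub1} applied to $T(v)$ one could replace $T(v)$ by a strictly ``taller and thinner'' tree of the same order with fewer root-containing subtrees, contradicting optimality. Concretely, if a leaf is at height $<h$, then there is slack in the height budget somewhere along the path from the root, and any tree of given order that does not use its full height budget can be restructured into a path-like extension that strictly lowers $F(\cdot)$, by the extremal characterization in Claim~\ref{claim:min_max_Core} that the path minimizes root-containing subtrees.

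Thus the proof reduces to the following clean claim: among rooted trees of a fixed order whose height is allowed to be as large as $h$, any tree achieving the minimum number of root-containing subtrees must use the full height $h$ (equivalently, must have a leaf at depth $h$). I would establish this by noting that the minimizer's value is weakly decreasing as the height bound increases, and that whenever a leaf fails to reach depth $h$ one can append that leaf's ``mass'' to a deepest branch — a move that converts a branching configuration into a more path-like one and, by the strict inequality in the minimum case of Claim~\ref{claim:min_max_Core}, strictly decreases the count. Combining this with the optimality of $T$ yields the contradiction, completing the proof that every leaf in an optimal tree has height exactly $h$.
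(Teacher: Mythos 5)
Your proposal has a genuine gap, and it sits exactly where the lemma is hardest. Your main move --- detach the shallow leaf $\ell$ and re-attach it below a deepest leaf --- is only legal when the deepest leaf has depth strictly less than $h$; but in the main case the optimal tree already has height exactly $h$ (indeed your own ``clean claim'' asserts this), so the reattachment would create a vertex at depth $h+1$ and violate the height bound. You acknowledge this obstacle, but your patch does not close it: you claim that if some ancestor $v$ of $\ell$ has $T(v)$ of height strictly less than its budget $h-h_T(v)$, then Lemma~\ref{lem:sub1} together with Claim~\ref{claim:min_max_Core} lets you replace $T(v)$ by a ``taller and thinner'' tree of the same order with strictly fewer root-containing subtrees. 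No such replacement need exist: the portion of $T$ hanging below $\ell$'s nearest branching ancestor is a bare pendant path (and $T(\ell)$ itself is a single vertex), and a path rooted at an end is already the unique minimizer of root-containing subtrees among all rooted trees of its order by Claim~\ref{claim:min_max_Core} --- so no purely local replacement inside the deficient branch can strictly improve anything. Note also that Claim~\ref{claim:min_max_Core} carries no height constraint, so it cannot certify strict improvements in the regime where the height constraint is binding, which is the entire content of this lemma.

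The missing idea is that the improvement must come from restructuring \emph{across sibling branches} rather than inside the deficient branch. The paper takes $x$ to be the nearest ancestor of the shallow leaf $v$ having at least two children, lets $z$ be the child of $x$ toward $v$ and $y$ another child of $x$, and re-hangs the pendant path $z,\ldots,v$ onto $y$, forming $T'(x)=T(x)-xz+yz$. This deepens $v$ by exactly one level, so the bound $h$ is respected precisely because $h_T(v)<h$ --- no appeal to a deepest leaf is needed --- and a short product-formula computation gives $F_{T(x)}(x)-F_{T'(x)}(x)=d_{T(x)}(x,v)\,F_{T_x}(x)>0$, contradicting optimality via Lemma~\ref{lem:sub1}. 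Your proposal never supplies the analogue of this computation: the assertion that ``a vertex closer to the leaves lies in fewer root-containing subtrees,'' i.e., that re-attaching deeper strictly lowers the count, is exactly what has to be proved (for cross-branch moves it is a nontrivial comparison of the two products of decoration factors along the two root-paths), so as written the argument is circular at its key step. Finally, your concluding ``clean claim'' (the optimum attains height $h$, equivalently \emph{some} leaf has depth $h$) is strictly weaker than the lemma, which requires \emph{every} leaf to have depth $h$; the burden of upgrading the weak statement to the strong one falls back on the defective leaf-moving argument.
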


\begin{proof}
If $n=h+1$, it is straightforward to see that the path rooted at one end is the optimal tree. In the case when $n>h+1$, some vertex must have at least 2 children. 
Suppose, for contradiction, that there is a leaf $v\in V(T)$ whose height is less than $h$.
Let $x$ be the closest ancestor (possibly the root) of $v$ that has at least two children. 
Let $y$ be a child of $x$ that is not on $P(x,v)$ and $z$ be the child of $x$ on $P(x, v)$. 

\begin{figure}[htbp]
\centering
\begin{tabular}{c}
    \begin{tikzpicture}[scale=1.2]
        \node[fill=black,circle,inner sep=1pt] (t1) at (0,0) {};
        \node[fill=black,circle,inner sep=1pt] (t2) at (1,1) {};
        \node[fill=black,circle,inner sep=1pt] (t3) at (2,.5) {};
        \node[fill=black,circle,inner sep=1pt] (t4) at (3 ,1.5) {};

        \node[fill=black,circle,inner sep=1pt] (t5) at (5,0) {};
        \node[fill=black,circle,inner sep=1pt] (t6) at (6,1) {};
        \node[fill=black,circle,inner sep=1pt] (t7) at (7, 1) {};
        \node[fill=black,circle,inner sep=1pt] (t8) at (8 ,2) {};

       \draw (t1)--(t2);
       \draw (t1)--(t3);
       \draw (t1)--(-.5,1)--(.5,1)--cycle;
       \node at(0,.7){$T_x$};
       \draw (t2)--(-.4+1,2)--(.4+1,2)--cycle;

       \draw (t5)--(t6);
       \draw (t6)--(t7);
       \draw (t5)--(-.5+5,1)--(.5+5,1)--cycle;
       \node at (5,.7){$T_x$};
       \draw (t6)--(-.4+1+5,2)--(.4+1+5,2)--cycle;

  \node at (2.5,1) {.};
  \node at(2.3,0.8) {.};
  \node at(2.7,1.2) {.};
  \node at(0,-.2){$x$};
  \node at(1, 0.8){$y$};
  \node at(2,.3){$z$};
  \node at(3, 1.3){$v$};
  \node at(1,-.8){$T(x)$};

  \node at (7.5,1.5) {.};
  \node at(7.3,1.3) {.};
  \node at(7.7,1.7) {.};
  \node at(5,-.2){$x$};
  \node at(6, 0.8){$y$};
  \node at(7,.8){$z$};
  \node at(8, 1.8){$v$};
  \node at(6,-.8){$T'(x)$};

 \end{tikzpicture}
 \end{tabular}
\caption{Trees $T(x)$ and $T'(x)$ from Lemma~\ref{lem:sub2}}\label{fig:example}
\end{figure}
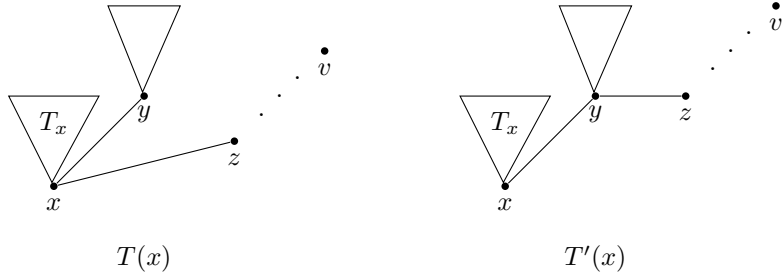 

Let $T_x$ be the component containing $x$ in $T-xy - xz$ and consider the tree
$$ T'(x) := T(x) - xz + yz $$ depicted in Figure~\ref{fig:example}.
Note that $T'(x)$ has the same order as $T(x)$ and has height no more than $h-h_T(x)$ because the height of $v$ in $T$ is less than $h$.

Counting the number of subtrees containing $x$ in each tree, we obtain the following equalities:
\begin{eqnarray*}
F_{T(x)}(x)&=&F_{T_x}(x)(1+d_{T(x)}(x,v))(1+F_{T(y)}(y)),\\
F_{T'(x)}(x)&=&F_{T_x}(x)\left[1+(1+d_{T(x)}(x,v))F_{T(y)}(y)\right]. \end{eqnarray*}
Together, these imply
$$F_{T(x)}(x)-F_{T'(x)}(x)=d_{T(x)}(x,v) F_{T_x}(x)> 0 .$$
Since $T$ was optimal, this contradicts Lemma~\ref{lem:sub1}.
\end{proof}

\begin{lemma}\label{lem:sub3}
Every optimal tree has one of the following two properties: 
\begin{itemize}
\item All non-root vertices have degree at most 3. 
\item All non-root vertices of height less than $h-1$ have degree at most 3. For any vertex $v$ of height $h-1$, $deg(v)\leq 4$. Further, if $deg(v)=4$, then the parent of $v$ must have degree 2 or be the root. 
\end{itemize}

\end{lemma}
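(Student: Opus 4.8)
The plan is to prove the two degree bounds by local surgery on an optimal tree $T$, leaning on two facts already in hand: by Lemma~\ref{lem:sub1} every $T(v)$ is itself optimal for its order and the residual height $h-h_T(v)$, and by Lemma~\ref{lem:sub2} every leaf sits at height $h$, so \emph{every} subtree hanging below a vertex reaches height $h$ and hence has maximal height. Throughout I would use the product identity $F_{T(v)}(v)=\prod_i\bigl(1+F_{T(c_i)}(c_i)\bigr)$ over the children $c_i$ of $v$, together with the fact that $F_T(\rho)$ is strictly increasing in $F_{T(z)}(z)$ for every $z$; thus any surgery confined to the descendants of $z$ that strictly lowers $F_{T(z)}(z)$ strictly lowers $F_T(\rho)$ and contradicts optimality. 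Since the borderline analysis at height $h-1$ feeds on the bound at lower heights, I would establish the heights $<h-1$ case first.

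\textbf{Stage 1 (non-root vertices of height $<h-1$ have degree $\le 3$).} Suppose a non-root $v$ at height $h_T(v)\le h-2$ has at least three children, and write $a_i:=F_{T(c_i)}(c_i)\ge 2$ (each $\ge 2$ since, reaching height $h$, every $c_i$ has a descendant). The natural move is to \emph{merge} two branches: transfer all children of $c_3$ onto $c_2$ and reattach the now-childless $c_3$ as a pendant leaf at height $h$ below some height-$(h-1)$ vertex of $T(c_1)$. This conserves order and height, removes one child from $v$, and multiplies $F_T(\rho)$ by $\frac{1+a_2a_3}{(1+a_2)(1+a_3)}$ times the (attenuated, $<2$) cost of inserting one leaf. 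For the smallest branches $a_2=a_3=2$ the merge gain is $\tfrac{9}{5}$ while the leaf can be placed at cost $<\tfrac{5}{3}$, a strict decrease that reproduces the $27\to25$ example. The main obstacle is that this single-leaf relocation is too weak when the branches are large or nearly balanced: as $a_2=a_3=a\to\infty$ the gain $\frac{(1+a)^2}{1+a^2}\to 1$, whereas relocating even one leaf costs a factor bounded away from $1$. In that regime the improving operation is genuinely non-local at $v$ (one must push a branch point upward and re-split the mass, as in passing from $F=28$ to $F=24$ on nine vertices). I therefore expect Stage~1 to require induction on $h-h_T(v)$ via Lemma~\ref{lem:sub1}, reducing to the claim that the minimum of $\prod_i\bigl(1+g(s_i)\bigr)$ over compositions $(s_i)$ of a fixed order—where $g(s)$ is the optimal root-subtree count for a tree of order $s$ and residual height $h-h_T(v)-1$—is never \emph{strictly} improved by using three or more parts instead of two. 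Establishing this log-convexity-type property of $g$ is where the real work lies.

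\textbf{Stage 2 (vertices of height $h-1$).} Such a vertex $v$ has only leaf children, so $T(v)$ is a star and $F_{T(v)}(v)=2^{\degree(v)-1}$ by Claim~\ref{claim:min_max_Core}; no internal change is possible, so every move must involve the parent $p$. First I would apply the \emph{pull-up}: repurpose one leaf of $v$ as a new child $e$ of $p$ and hang a second leaf of $v$ below $e$. This conserves order and height and replaces the factor $1+2^{k}$ (where $k=\degree(v)-1$) by $(1+2^{k-2})(1+2)$; the difference $2^{k-2}-2$ is positive exactly when $k\ge 4$, so $\degree(v)\ge 5$ is strictly suboptimal and $\degree(v)\le 4$ follows. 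When $\degree(v)=4$ the same move is \emph{neutral} ($2^{1}-2=0$): it converts $v$ into two degree-$2$ vertices while raising $\degree(p)$ by one. If $p$ were non-root of degree $\ge 3$, the resulting tree would be optimal yet contain a non-root vertex $p$ of height $h-2<h-1$ and degree $\ge 4$, contradicting Stage~1. Hence a degree-$4$ vertex at height $h-1$ forces its parent to be the root or to have degree exactly $2$, which is the stated condition.

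The dichotomy in the lemma then follows at once: if no height-$(h-1)$ vertex has degree $4$, every non-root vertex has degree $\le 3$ and we are in the first case; otherwise Stages~1 and~2 give precisely the second case. To summarize, Stage~2 is clean and strict modulo Stage~1, so the crux—and the part I expect to be hardest—is the higher-level bound of Stage~1, consistent with the fact that the full optimal structure is only partially characterized.
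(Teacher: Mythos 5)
Your Stage~2 is sound (it is essentially the paper's treatment of the borderline case in disguise), but the lemma does not get proved by your proposal, because Stage~1 --- which you yourself identify as the crux --- is a genuine gap rather than a proof. The surgery you propose there (identify $c_2$ with $c_3$, then re-insert the freed vertex as a pendant leaf inside $T(c_1)$) fails exactly where you say it does: the merge ratio $\frac{(1+a_2)(1+a_3)}{1+a_2a_3}$ tends to $1$ for large balanced branches, while the leaf re-insertion inflates the count by a factor that need not be compensated. Your fallback --- an induction via Lemma~\ref{lem:sub1} reducing to a log-convexity-type property of the optimal root-subtree count $g(s)$ --- is only conjectured; and since the optimal trees of given order and height are not even fully characterized (that is the open problem driving Section~\ref{sec: Min_subtrees}), there is no reason to expect that property to be accessible. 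Moreover, since your Stage~2 parent-degree conclusion quotes Stage~1, the second bullet of the lemma is also left unproved.

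The missing idea is to make the surgery self-balancing by transporting an entire branch, not a single leaf. The paper takes a non-root $x$ of degree at least $4$, with parent $u$ and three children $y,z,w$ chosen so that $F_{T(w)}(w)$ is maximal, deletes the edges $xz$ and $xw$, identifies $y$ with $z$, and re-attaches the whole branch $T(w)$ to $u$ through one new intermediate vertex. The identification loses one vertex and the new path vertex regains it, so order and height are preserved with no side relocation, and the difference of root-containing subtree counts equals
\begin{align*}
F_{T_u}(u)\Bigl[ F_{T_x}(x)F_{T(y)}(y)\bigl(F_{T(w)}(w)-F_{T(z)}(z)\bigr)
+F_{T_x}(x)\bigl(F_{T(y)}(y)-1\bigr)
+\bigl(F_{T(w)}(w)+1\bigr)\bigl(F_{T_x}(x)F_{T(z)}(z)-1\bigr)\Bigr]\geq 0,
\end{align*}
where $T_u$ and $T_x$ are the components containing $u$ and $x$ after the edges $ux,xy,xz,xw$ are deleted; every term is nonnegative precisely because $w$ was chosen heaviest. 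This one inequality works at \emph{every} height, so no case split on $h_T(x)$ is needed: by Lemma~\ref{lem:sub1} optimality forces equality, which pins down $F_{T_x}(x)=F_{T(y)}(y)=F_{T(z)}(z)=F_{T(w)}(w)=1$, i.e., exactly your exceptional configuration (degree $4$, all children leaves, hence height $h-1$ by Lemma~\ref{lem:sub2}); the parent condition then follows by performing the now-neutral swap and repeating the argument once at $u$, where equality is impossible because $u$'s children, having height $h-1$, cannot be leaves. In short, your large-branch obstruction disappears once the heaviest branch is moved to the parent along a length-$2$ path instead of being left in place.
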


\begin{proof}
As before, this proof proceeds by contradiction. Let $x$ be a non-root vertex in an optimal tree $T$ with degree at least 4. Say $y$, $z$, and $w$ are three children of $x$ and let $u$ be the parent of $x$. Denote by $T_u$ and $T_x$ the components containing $u$ and $x$ respectively in $T-ux - xy - xz - xw$. Without loss of generality, assume  
$$ F_{T(w)}(w) = \max\{ F_{T(y)}(y), F_{T(z)}(z), F_{T(w)}(w) \}. $$

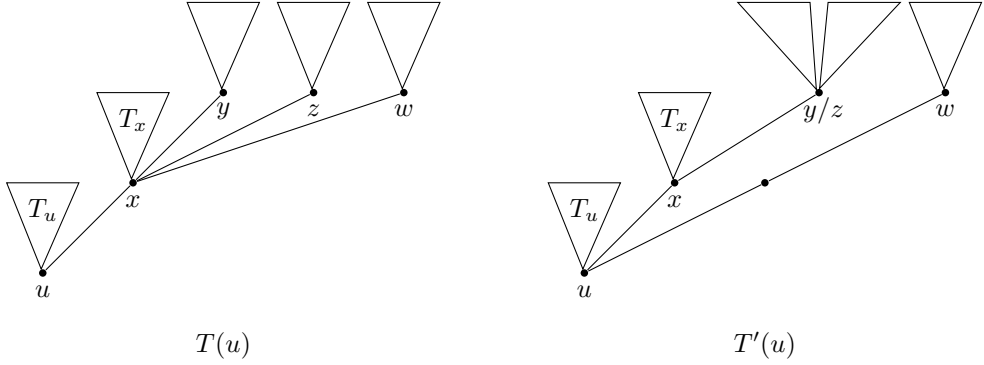
\begin{figure}[htbp]
\centering
\begin{tabular}{c}
    \begin{tikzpicture}[scale=1.2]
        \node[fill=black,circle,inner sep=1pt] (t1) at (0,0) {};
        \node[fill=black,circle,inner sep=1pt] (t2) at (1,1) {};
        \node[fill=black,circle,inner sep=1pt] (t3) at (2,2) {};
        \node[fill=black,circle,inner sep=1pt] (t4) at (3 ,2) {};
        \node[fill=black,circle,inner sep=1pt] (t5) at (4,2) {};

        \node[fill=black,circle,inner sep=1pt] (t6) at (6,0) {};
        \node[fill=black,circle,inner sep=1pt] (t7) at (7, 1) {};
        \node[fill=black,circle,inner sep=1pt] (t8) at (8 ,1) {};
        \node[fill=black,circle,inner sep=1pt] (t9) at (8.6 ,2) {};
        \node[fill=black,circle,inner sep=1pt] (t10) at (10 ,2) {};

       \draw (t1)--(t2);
       \draw (t2)--(t3);
       \draw (t2)--(t4);
       \draw (t2)--(t5);
       \draw (t1)--(-.4,1)--(.4,1)--cycle;
       \draw (t2)--(-.4+1,2)--(.4+1,2)--cycle;
       \draw (t3)--(-.4+2,3)--(.4+2,3)--cycle;
       \draw (t4)--(-.4+3,3)--(.4+3,3)--cycle;
       \draw (t5)--(-.4+4,3)--(.4+4,3)--cycle;

       \draw (t6)--(t7);
       \draw (t6)--(t8);
       \draw (t7)--(t9);
       \draw (t8)--(t10);
       \draw (t6)--(-.4+6,1)--(.4+6,1)--cycle;
       \draw (t7)--(-.4+7,2)--(.4+7,2)--cycle;
       \draw (t9)--(7.7,3)--(8.5,3)--cycle;
       \draw (t9)--(8.7,3)--(9.5,3)--cycle;
       \draw (t10)--(-.4+10,3)--(.4+10,3)--cycle;

  \node at(0,-.2){$u$};
  \node at(0,.7){$T_u$};
  \node at(1, 0.8){$x$};
  \node at(1,1.7){$T_x$};
  \node at(2,1.8){$y$};
  \node at(3, 1.8){$z$};
  \node at(4,1.8){$w$};
  \node at(2,-.8){$T(u)$};

  \node at(6,-.2){$u$};
  \node at(6,.7){$T_u$};
  \node at(7, 0.8){$x$};
  \node at(7,1.7){$T_x$};
  \node at(8.65,1.8){$y/z$};
  \node at(10, 1.8){$w$};
  \node at(8,-.8){$T'(u)$};

 \end{tikzpicture}
 \end{tabular}
\caption{Trees $T(u)$ and $T'(u)$ in the proof of Lemma~\ref{lem:sub3}.}\label{fig:example'}
\end{figure}

Now consider the tree $T'(u)$ obtained from $T(u)$ by removing the edges $xz$ and $xw$, inserting a path of length 2 between $u$ and $w$, while identifying the vertices $y$ and $z$ (Figure~\ref{fig:example'}). Note that $T'(u)$ has the same height and order as $T(u)$. Counting the number of subtrees containing $u$ in each, we find
\begin{eqnarray*}
F_{T(u)}(u)&=&F_{T_u}(u)\left[ 1+F_{T_x}(x)\left(1+F_{T(y)}(y)\right)\left(1+F_{T(z)}(z)\right)\left(1+F_{T(w)}(w)\right)\right],\\
F_{T'(u)}(u)&=&F_{T_u}(u)\left(2+F_{T(w)}(w)\right)\left[ 1+F_{T_x}(x)\left(1+F_{T(y)}(y)F_{T(z)}(z)\right)\right].
\end{eqnarray*}
Together, these imply the following
\begin{align}
F_{T(u)}(u)-F_{T'(u)}(u) & =
F_{T_u}(u)\left[F_{T_x}(x)F_{T(y)}(y)\left(F_{T(w)}(w)-F_{T(z)}(z)\right) \right. \nonumber\\
& \hspace{3 em} +F_{T_x}(x)\left(F_{T(y)}(y)-1\right)  \nonumber\\
& \hspace{3 em} \left. +\left(F_{T(w)}(w)+1\right)\left(F_{T_x}(x)F_{T(z)}(z)-1\right) \right] \nonumber\\
&\geq 0 . \label{ineq1}
\end{align}

Because $T$ is an optimal tree, $T(u)$ is an optimal tree by Lemma~\ref{lem:sub1}. Therefore \eqref{ineq1} must be equality. 
Note that for any tree $H$ and vertex $a\in V(H)$, $F_H(v)\geq 1$ because the subtree containing only the vertex $v$ will be counted. 
Therefore, equality holds in \eqref{ineq1} exactly when $F_{T_x}(x)=F_{T(y)}(y)=F_{T(z)}(z)=F_{T(w)}(w)=1$, or equivalently, $deg(x)=4$ and $y,z,w$ are all leaves so $x$ has height $h-1$ in $T$. Create $T'$ from $T$ by replacing $T(u)$ with $T'(u)$. Because \eqref{ineq1} is equality, $F_{T(u)}(u) = F_{T'(u)}(u)$. Therefore $T'$ is also an optimal tree. 

In $T'$, $deg_{T'}(x)=3$ but $deg_{T'}(u) = deg_T(u) + 1$. Observe $u$ has height $h-2$ in $T'$. If $u$ is not the root of $T'$ and $deg_{T'}(u)\geq 4$, then we can repeat the argument for optimal tree $T'$ and vertex $u$ having degree at least 4. Because the height of $u$ is $h-2$, we will find a contradiction in the step which parallels \eqref{ineq1}. Therefore $deg_{T'}(u) \leq 3$ which implies $deg_T(u) \leq 2$. Since $u$ is not the root of $T$, we can conclude $deg_T(u)=2$ as stated in the lemma.  
\end{proof}

In the proof of Lemma~\ref{lem:sub3}, in the case where $deg_T(x)=4$, we created another optimal tree $T'$ where $deg_{T'}(x)=3$ and no other degree 4 vertices where created. Hence, if an optimal tree has multiple degree 4 vertices of height $h-1$, we can repeat this procedure to obtain an optimal $T'$ with all vertices of degree at most 3. This establishes the following observation. 

\begin{obs}
There is an optimal tree in which all non-root vertices have degree at most 3. 
\end{obs}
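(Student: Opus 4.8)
The plan is to upgrade the single local exchange already carried out inside the proof of Lemma~\ref{lem:sub3} into a global statement by iterating it while tracking a monovariant. First I would invoke Lemma~\ref{lem:sub3} to split into two cases. If the optimal tree $T$ in hand already has every non-root vertex of degree at most $3$, there is nothing to prove. Otherwise the second alternative of Lemma~\ref{lem:sub3} applies, so every non-root vertex of degree exceeding $3$ is in fact a vertex of height $h-1$ and degree exactly $4$ whose parent is either the root or has degree $2$. Thus it suffices to eliminate such degree-$4$ vertices one at a time while preserving optimality, order, and height.

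The central step reuses the exchange from Lemma~\ref{lem:sub3} verbatim. Fix a degree-$4$ vertex $x$ of height $h-1$ with parent $u$. This is precisely the equality case of \eqref{ineq1} analyzed there: the three children of $x$ are leaves, $F_{T_x}(x)=1$, and replacing $T(u)$ by $T'(u)$ produces a tree $T'$ of the same order and height with $deg_{T'}(x)\le 3$. Since $F_{T(u)}(u)=F_{T'(u)}(u)$, the substitution mechanism of Lemma~\ref{lem:sub1} gives $F_{T'}(\rho)=F_T(\rho)$, so $T'$ is again optimal.

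The step I expect to require the most care — though it is really just bookkeeping — is verifying that this exchange creates no new non-root vertex of degree $4$, so that the count of such vertices strictly drops. The modification is supported entirely inside $T(u)$, hence every vertex outside $T(u)$ keeps its degree. Inside $T(u)$, the only degree that increases is that of $u$, which climbs by one; but the parent condition in Lemma~\ref{lem:sub3} gives $deg_T(u)\le 2$ (or $u$ is the root, whose degree is irrelevant to the claim), so $deg_{T'}(u)\le 3$. The only other vertex introduced at height $h-1$ is the internal vertex of the inserted length-$2$ path, which has degree $2$. Moreover $u$ sits at height $h-2$ and so can never itself be a target of a later exchange. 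Consequently each application strictly decreases the number of non-root vertices of degree $4$.

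Finally, since an optimal tree has finitely many vertices, this count is a nonnegative integer that strictly decreases with each exchange, so the process terminates after finitely many steps. The resulting tree is optimal, has the same order and height, and has no non-root vertex of degree $4$; together with the degree-$3$ bound on all remaining non-root vertices supplied by Lemma~\ref{lem:sub3}, every non-root vertex then has degree at most $3$, which is the assertion of the observation.
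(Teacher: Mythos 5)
Your proof is correct and takes essentially the same route as the paper: the paper also iterates the equality-case exchange from Lemma~\ref{lem:sub3}, observing that each application converts a degree-$4$ vertex at height $h-1$ into one of smaller degree while only raising the degree of its parent (from $2$ to $3$, or the root), so that no new degree-$4$ vertices appear and the procedure terminates. Your write-up simply makes explicit the monovariant and the preservation of optimality via the substitution mechanism of Lemma~\ref{lem:sub1}, both of which the paper leaves informal.
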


We now shift our attention to the structures of $T_i$ for $1\leq i \leq k$.

\begin{lemma}\label{lem:sub4}
In an optimal tree $T$, each subtree $T_i\cup\{\rho\}$ falls into one of the following three categories:
\begin{itemize}
\item There is at most one non-root vertex with degree $3$.
\item All non-root vertices of height at most $h-3$ have degree $2$, the vertex of height $h-2$ has degree $3$, and exactly one of its children has degree $3$. 
\item All non-root vertices of height at most $h-2$ have degree $2$ and the vertex of height $h-1$ has degree $4$. 
\end{itemize}
\end{lemma}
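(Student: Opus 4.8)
The plan is to recast the assertion in a cleaner form and then prove it by marrying local optimality with one global exchange. First I would assemble the skeleton of $T_i$: by Lemma~\ref{lem:sub1} every $T(v)$ is optimal for its own order and height $\le h-h_T(v)$; by Lemma~\ref{lem:sub2} all leaves of $T_i$ lie at height $h$; and by Lemma~\ref{lem:sub3} together with the Observation all non-root degrees are at most $3$, save possibly for a single degree-$4$ vertex at height $h-1$ whose parent is the root or has degree $2$. Hence each $T_i$ is a binary tree all of whose leaves occupy the bottom level, so it is determined by the positions of its branch points, and the number of leaves is one more than the number of branch points (a degree-$4$ vertex counting as two). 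The first listed structure is then exactly the one- or two-leaf trees (a path or an $f$-split), and the two displayed bottom caps are exactly the admissible three-leaf shapes; so it suffices to forbid four or more leaves and to rule out every mis-placed three-leaf cap.

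The computational engine is the subtree count of a single split. Writing $S_f$ for the $f$-split (order $h+f$, reaching height $h-1$) and using the recursion $F_{T(x)}(x)=\prod_c (1+F_{T(c)}(c))$ together with the fact that a path on $m$ vertices has $F=m$ (Claim~\ref{claim:min_max_Core}), a direct computation along the stem gives
\[ F_{S_f}(\text{root}) = (h-f-1)+(1+f)^2 = h+f+f^2 . \]
Since $S_f$ is a competitor of the same order and height, Lemma~\ref{lem:sub1} forces $F_{T_i}(v_i)\le h+f+f^2$ whenever a split of that order exists, that is whenever $|V(T_i)|\le 2h-1$ (the bound $|V(T_i)|\le 2h-1$ itself will come from the next step). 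I would then verify that this split is the strict minimizer for its order \emph{except} at the two orders $h+2$ and $h+3$: the degree-four cap has $F_{T_i}(v_i)=h+6=F_{S_2}$, and the two-branch cap has $F_{T_i}(v_i)=h+12=F_{S_3}$, so each merely \emph{ties}—and is thus an alternative optimum to—its split, which is precisely why these two exceptional shapes are allowed.

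To eliminate every remaining configuration I would use a lift-and-reoptimize exchange, in the spirit of the swap in Lemma~\ref{lem:sub3}. At a branch point $a$ whose parent $p$ is the root or has degree $2$, detach one child-subtree $A$ of $a$ (with sibling subtree $B$) and reattach it to $p$; its root rises one level, so by Lemma~\ref{lem:sub1} it may be replaced by an optimal tree $A'$ of the same order with one more unit of available height. Comparing the factor contributed at $p$ before and after, namely $1+(1+F_A)(1+F_B)$ versus $(2+F_B)(1+F_{A'})$, the global count strictly drops exactly when $(2+F_B)(F_A-F_{A'})>F_A$. Because extra height strictly lowers the count of any subtree that is not already a path (for a split it drops $F$ by $2f-1$), this forces every branch point not in the bottom two levels to split into two paths—yielding a single $f$-split, and in particular $|V(T_i)|\le 2h-1$—while the inequality degrades to equality precisely in the bottom two levels, confining any \emph{second} branch point to heights $h-2$ and $h-1$ and generating the exceptional caps as the tied images.

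Finally I would settle the bottom two levels by a finite comparison via the same recursion: for a vertex $a$ at height $h-2$ all of whose grandchildren are leaves, $F_{T(a)}(a)=\prod_x(1+2^{c_x})$, where $x$ ranges over $a$'s children and $c_x\ge 1$ counts the leaves below $x$, and minimizing over the partitions of the leaf count singles out $\{2,1\}$ (the two-branch cap) and $\{3\}$ realized by a single child of degree $4$ (the degree-four cap) as the only optimal three-leaf caps; every four-leaf cap, such as the $\{2,2\}$ cap of order $h+4$ with $F_{T_i}(v_i)=h+22>h+20=F_{S_4}$, and every three-leaf cap whose top branch lies above height $h-2$, is strictly beaten by the split of equal order. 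The parent-degree condition of Lemma~\ref{lem:sub3} then pins the degree-$4$ possibility to exactly the third listed shape. The main obstacle is the sign analysis of the lift-and-reoptimize exchange: the gain from the extra height level depends on how tightly $A$ is packed (essentially on its split parameter), so one must show this gain dominates the $-F_A$ deficit in every case except the pinned bottom configurations, and then splice this estimate cleanly to the finite bottom-level enumeration and to the degree-$4$ exception so that no unlisted shape can survive.
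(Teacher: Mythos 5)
Your proposal is a plan rather than a proof: by your own closing admission, the decisive step --- the ``sign analysis'' showing that the lift-and-reoptimize exchange strictly decreases the count except in the pinned bottom configurations --- is left undone, and nothing you actually compute eliminates even one forbidden shape. The pieces you do verify are correct: $F_{S_f}=h+f+f^2$; the ties $h+6=F_{S_2}$ for the degree-$4$ cap and $h+12=F_{S_3}$ for the two-branch cap; and the criterion that the exchange at a branch vertex with child subtrees $A,B$ strictly helps iff $(2+F_B)(F_A-F_{A'})>F_A$. Your frame (reduce to binary trees with all leaves at the bottom, then exchange) is also the same as the paper's, which instead runs two bespoke surgeries with exact count comparisons. (A smaller slip: ``minimizing over partitions of the leaf count'' compares caps of \emph{different} orders --- the $\{3\}$ cap has order $h+2$ while the $\{2,1\}$ cap has order $h+3$ --- so that enumeration has to be restated order by order.)

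The deeper problem is that the missing step is not merely unfinished but impossible as you describe it: equality in your exchange is \emph{not} confined to the bottom two levels. If a branch vertex $a$, at any height, has one child subtree $B$ a path and the other child subtree $A$ a $1$-split, both reaching the bottom, and $H$ is the height budget below $a$'s children, then $F_A=H+3$, $F_B=H+1$, and after lifting $A$ the path fits, so $F_{A'}=H+2$; hence $(2+F_B)(F_A-F_{A'})=H+3=F_A$, exact equality no matter where $a$ sits. Such trees cannot be ruled out by any argument, because they are genuinely optimal: for $n=10$, $h=5$, let $T$ consist of a path $\rho,a,b$, where $b$ has children $c,c'$, with $c$ followed by $d$ carrying two leaves and $c'$ followed by $d'$ carrying one leaf; then $F_T(\rho)=2+(1+5)(1+3)=26$, the same as for $\rho$ joined to the root of a $4$-split, and a short check over leaf counts (using Lemma~\ref{lem:sub2}) shows $26$ is the minimum for order $10$ and height at most $5$. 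This optimal tree has non-root degree-$3$ vertices at heights $h-3$ and $h-1$, so it fits none of the three categories: the universal statement you are trying to prove is itself too strong. What is provable --- and all that the paper uses afterwards --- is the existential form (Observation~\ref{T_i split}: \emph{there is} an optimal tree whose every branch has at most one non-root degree-$3$ vertex), reached by using ties to pass to an alternative optimum. The paper's own proof in fact stumbles at exactly this point: its equality analysis asserts that $\ell_1=\ell_2=1$ is forced, whereas its displayed chain only forces $\ell_1=1$ and $T(y)$ to be a path, with $\ell_2$ arbitrary. So your instinct that all the difficulty is concentrated in the tie analysis is sound, but neither your sketch nor any completion of it can deliver the stated trichotomy for \emph{every} optimal tree.
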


\begin{proof}
We prove this in two pieces, considering the alternatives from Lemma \ref{lem:sub3} separately.  We start with the optimal trees in which all vertices have degree at most 3. 

For contradiction, suppose there exists a $T_i\cup\{\rho\}$ with at least two non-root vertices of degree $3$. Let $v$ be a degree 3 vertex of greatest height in $T_i$ and let $u, w$ be the two children of $v$.  Let $z$ be the closest ancestor of $v$ such that $deg_{T_i}(z)=3$, $z$ has parent $x$, and $z$ has child $y \notin V(P(z,v))$. Let $\ell_1$ denote the distance from $v$ to a leaf in $T_i$ and $\ell_2$ the length of $P(v, z)$. Let $T_x$ denote the component containing $x$ in $T(x)-xz$ (Figure~\ref{fig:ex}). 

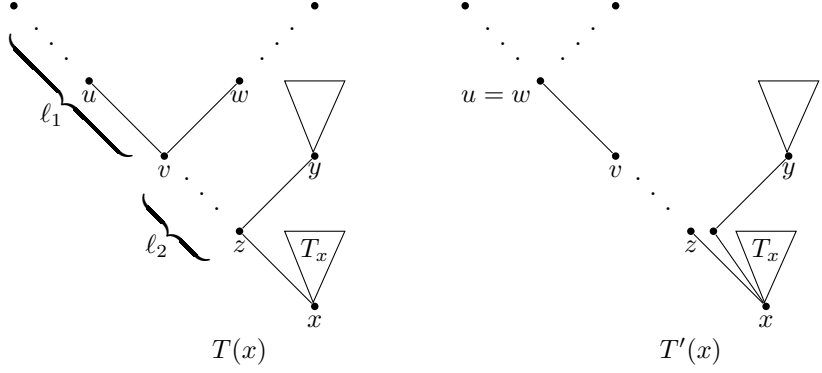
\begin{figure}[htbp]
\centering
\begin{tabular}{c}
    \begin{tikzpicture}[scale=1]
        \node[fill=black,circle,inner sep=1pt] (t1) at (0,4) {};
        \node[fill=black,circle,inner sep=1pt] (t2) at (1,3) {};
        \node[fill=black,circle,inner sep=1pt] (t3) at (2,2) {};
        \node[fill=black,circle,inner sep=1pt] (t4) at (3 ,3) {};
        \node[fill=black,circle,inner sep=1pt] (t5) at (4,4) {};
        \node[fill=black,circle,inner sep=1pt] (t6) at (3,1) {};
        \node[fill=black,circle,inner sep=1pt] (t7) at (4, 2) {};
        \node[fill=black,circle,inner sep=1pt] (t8) at (4 ,0) {};

        \node[fill=black,circle,inner sep=1pt] (t9) at (6,4) {};
        \node[fill=black,circle,inner sep=1pt] (t10) at (7,3) {};
        \node[fill=black,circle,inner sep=1pt] (t11) at (8,4) {};
        \node[fill=black,circle,inner sep=1pt] (t12) at (8, 2) {};
        \node[fill=black,circle,inner sep=1pt] (t13) at (9 ,1) {};
        \node[fill=black,circle,inner sep=1pt] (t14) at (10 ,0) {};
        \node[fill=black,circle,inner sep=1pt] (t15) at (9.3,1) {};
        \node[fill=black,circle,inner sep=1pt] (t16) at (10.3,2) {};

       \draw (t2)--(t3);
       \draw (t3)--(t4);
       \draw (t6)--(t7);
       \draw (t6)--(t8);
       \draw (t7)--(-.4+4,3)--(.4+4,3)--cycle;
       \draw (t8)--(-.4+4,1)--(.4+4,1)--cycle;
       \node at(4,.75){$T_x$};

       \draw (t10)--(t12);
       \draw (t13)--(t14);
       \draw (t14)--(t15);
       \draw (t15)--(t16);
       \draw (t14)--(-.4+10,1)--(.4+10,1)--cycle;
       \node at(10,.75){$T_x$};
       \draw (t16)--(-.4+10.3,3)--(.4+10.3,3)--cycle;

  \node at(0.3,3.7){.};  \node at(0.5,3.5){.};  \node at(0.7,3.3){.};
  \node at(3.3,3.3){.};  \node at(3.5,3.5){.};  \node at(3.7,3.7){.};  
  \node at(2.3,1.7){.};  \node at(2.5,1.5){.};  \node at(2.7,1.3){.};  
  \node at(1,2.8){$u$};
  \node [rotate=-45] at(.7,2.75){$\substack{\underbrace{\hspace{6.5 em}}}$};
  \node at(.5,2.55){$\ell_1$};
  \node at(3, 2.8){$w$};
  \node at(2,1.8){$v$};
  \node at(3,0.8){$z$};
  \node [rotate=-45] at(2.1,1){$\substack{\underbrace{\hspace{3.5 em}}}$};  
  \node  at(1.9,.8){$\ell_2$};
  \node at(4,-.2){$x$};
  \node at(4,1.8){$y$};
  \node at(3,-.6){$T(x)$};

  \node at(6.3,3.7){.};  \node at(6.5,3.5){.};  \node at(6.7,3.3){.};
  \node at(7.3,3.3){.};  \node at(7.5,3.5){.};  \node at(7.7,3.7){.};  
  \node at(8.3,1.7){.};  \node at(8.5,1.5){.};  \node at(8.7,1.3){.};  
  \node [left] at(7,2.8){$u=w$};
  \node at(8,1.8){$v$};
  \node at(9,0.8){$z$};
  \node at(10,-.2){$x$};
  \node at(10.3,1.8){$y$};
  \node at(9,-.6){$T'(x)$};

 \end{tikzpicture}
 \end{tabular}
\caption{Transforming $T(x)$ into $T'(x)$ when $deg_T(v)=3$ in the proof of Lemma~\ref{lem:sub4}.}\label{fig:ex}
\end{figure}

Create a new tree $T'(x)$ from $T(x)$ by removing the edges $vw$ and $zy$, inserting a length 2 path between $x$ and $y$, and identifying $u$ and $w$ (Figure~\ref{fig:ex}). Note that $T'(x)$ has the same height and order as $T(x)$. The number of subtrees containing $x$ in each is
\begin{align*}
F_{T(x)}(x) &=F_{T_x}(x)\left[1+(1+F_{T(y)}(y))[\ell_2+(\ell_1+1)^2]\right], \\
F_{T'(x)}(x) & =F_{T_x}(x)(2+F_{T(y)}(y))\left(\ell_2+2+\ell_1^2\right) . \end{align*}
By Lemma~\ref{lem:sub2}, the height of each leaf in $T$ is $h$, hence $V(T(y)) \geq \ell_1+\ell_2$. Now we have
\begin{align}
F_{T(x)}(x)-F_{T'(x)}(x)&=F_{T_x}(x)\left[(1+F_{T(y)}(y))(2\ell_1-1)-(\ell_1^2+\ell_2+1)]\right] \nonumber \\
& \geq F_{T_x}(x)\left[(1+\ell_1 + \ell_2)(2\ell_1-1)-(\ell_1^2+\ell_2+1)]\right]  \label{bound} \\
& = F_{T_x}(x)(\ell_1^2 + 2\ell_1\ell_2 + \ell_1 -2\ell_2 - 2) \nonumber \\
&\geq 0 . \label{zero} \end{align}
When either \eqref{bound} or \eqref{zero} is strict inequality, we have a contradiction to the optimality of $T$. Equality holds exactly when $\ell_1 = \ell_2 = 1$ and $|V(T(y))| = \ell_1 + \ell_2$. In other words, $T(y)$ is a single path on two vertices with $y$ having height $h-1$. Since $T'$ (constructed from $T$ by replacing $T(x)$ with $T'(x)$) is an optimal tree, if $x$ is not the root, $deg_{T'}(x) \leq 3$ since $x$ has height $h-3$. Therefore $deg_T(x)\leq 2$ as described in the second property of the lemma. 

If $T$ falls into the second category listed in Lemma~\ref{lem:sub3}, then consider a subtree $T_i$ with a vertex $v$ of degree $4$ at height $h-1$. We will show that all other non-root vertices in $T_i\cup\{\rho\}$ must have degree 2.
Suppose to the contrary that $v$ has an ancestor $z$ of degree 3. (In this way, we are able to simultaneously handle the case when there are two vertices of degree $4$ in $T_i\cup \{\rho\}$ because they would have to share a common ancestor of degree 3.)
Label the vertices as before with $s$ being the third child of $v$ (Figure \ref{fig:exdeg4}). 

Create $T'(x)$ by altering $T(x)$ in a manner similar to that described above.  Define \[T'(x)=T(x)-wv-yz+xw+wy\] as shown in Figure \ref{fig:exdeg4}.

\begin{figure}[htbp]
\centering
\begin{tabular}{c}
    \begin{tikzpicture}[scale=1.2]
        \node[fill=black,circle,inner sep=1pt] (t2) at (1,3) {};
        \node[fill=black,circle,inner sep=1pt] (t3) at (2,2) {};
        \node[fill=black,circle,inner sep=1pt] (t4) at (3 ,3) {};
        \node[fill=black,circle,inner sep=1pt] (t5) at (2,3) {};
        \node[fill=black,circle,inner sep=1pt] (t6) at (3,1) {};
        \node[fill=black,circle,inner sep=1pt] (t7) at (4, 2) {};
        \node[fill=black,circle,inner sep=1pt] (t8) at (4 ,0) {};

        \node[fill=black,circle,inner sep=1pt] (t10) at (7,3) {};
        \node[fill=black,circle,inner sep=1pt] (t11) at (9,3) {};
        \node[fill=black,circle,inner sep=1pt] (t12) at (8, 2) {};
        \node[fill=black,circle,inner sep=1pt] (t13) at (9 ,1) {};
        \node[fill=black,circle,inner sep=1pt] (t14) at (10 ,0) {};
        \node[fill=black,circle,inner sep=1pt] (t15) at (9.3,1) {};
        \node[fill=black,circle,inner sep=1pt] (t16) at (10.3,2) {};

       \draw (t2)--(t3);
       \draw (t3)--(t4);
       \draw (t5)--(t3);
       \draw (t6)--(t7);
       \draw (t6)--(t8);
       \draw (t7)--(-.4+4,3)--(.4+4,3)--cycle;
       \draw (t8)--(-.4+4,1)--(.4+4,1)--cycle;
       \node at(4,.7){$T_x$};
       \draw (t10)--(t12);
       \draw (t11)--(t12);
       \draw (t13)--(t14);
       \draw (t14)--(t15);
       \draw (t15)--(t16);
       \draw (t14)--(-.4+10,1)--(.4+10,1)--cycle;
       \node at(10,.7){$T_x$};
       \draw (t16)--(-.4+10.3,3)--(.4+10.3,3)--cycle;

  \node at(2.3,1.7){.};  \node at(2.5,1.5){.};  \node at(2.7,1.3){.};  
  \node at(1,2.8){$u$};
  \node at(1.8, 2.8){$w$};
  \node at(3, 2.8){$s$};
  \node at(2,1.8){$v$};
  \node at(3,0.8){$z$};
  \node [rotate=-45] at(2.2, 1.2){$\substack{\underbrace{\hspace{3.5 em}}}$};  
  \node  at(2.1,.9){$\ell_2$};
  \node at(4,-.2){$x$};
  \node at(4,1.8){$y$};
  \node at(3,-.6){$T(x)$};

  \node at(8.3,1.7){.};  \node at(8.5,1.5){.};  \node at(8.7,1.3){.};  
  \node at(7,2.8){$u$};
  \node at(9,2.8){$s$};
  \node at(8,1.8){$v$};
  \node at(9,0.8){$z$};
  \node at(9.3,1.2){$w$};
  \node at(10,-.2){$x$};
  \node at(10.3,1.8){$y$};
  \node at(9,-.6){$T'(x)$};

 \end{tikzpicture}
 \end{tabular}
\caption{Transforming $T(x)$ into $T'(x)$ when $v$ has degree $4$ in the proof of Lemma~\ref{lem:sub4}.}\label{fig:exdeg4}
\end{figure}
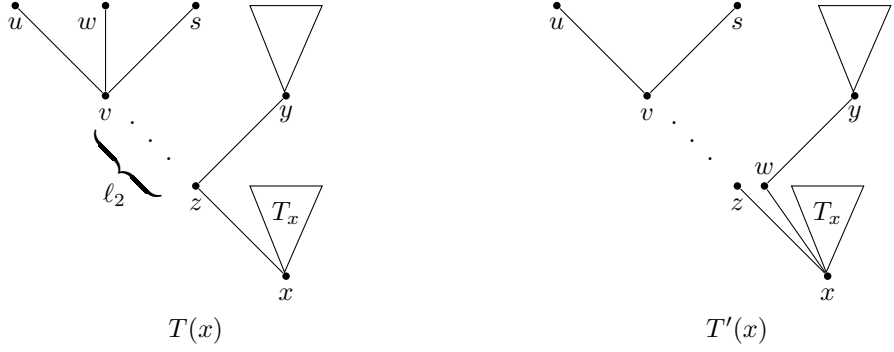

Let $\ell_2$ be the distance from $z$ to $v$ in $T(x)$. Because all leaves have height $h$, $F_{T(y)}(y)\geq \ell_2+1$ which is tight when $T(y)$ is a path. Now if we calculate $F_{T(x)}(x)$ and  $F_{T'(x)}(x)$ exactly and take their difference, we find
\begin{eqnarray*}
F_{T(x)}(x) &=& F_{T_x}(x) \left(1+ (1+F_{T(y)}(y))(\ell_2+8)\right)\\
F_{T'(x)}(x) & =& F_{T_x}(x)  ( 2+ F_{T(y)}(y) ) ( \ell_2 + 5)\\
F_{T(x)}(x) - F_{T'(x)}(x) &=& F_{T_x}(x)  \left( 3F_{T(y)}(y) - \ell_2 -1 \right) \\
&\geq & F_{T_x}(x)  \left( 3(\ell_2+1) - \ell_2 -1 \right) \\
&= & F_{T_x}(x)  \left( 2\ell_2+2 \right)\\
&>& 0.
\end{eqnarray*}
This contradicts our choice of $T$. Thus $T_i \cup \{\rho\}$ can have at most one vertex of degree 4 and all other non-root vertices must have degree 2 as described in the third property of the lemma. 
\end{proof}

Once again, it is useful to note that for each of the optimal trees described in the second two properties of Lemma \ref{lem:sub4}, the proof supplies $T'$ analogues which have the same number of root-containing subtrees and yet fall under the first property description in Lemma \ref{lem:sub4}. This gives the following observation.

\begin{obs} 
There is an optimal tree with each $T_i\cup\{\rho\}$ having at most one non-root vertex of degree 3. 
\label{T_i split}
\end{obs}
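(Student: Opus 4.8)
The plan is to obtain the desired optimal tree by repeatedly applying the local surgeries already constructed in the proofs of Lemmas~\ref{lem:sub3} and~\ref{lem:sub4}, each of which preserves the number of root-containing subtrees in its equality case. I would start from an arbitrary optimal tree $T$; by Lemma~\ref{lem:sub4} every branch $T_i\cup\{\rho\}$ falls into one of the three listed categories, and the only categories with more than one non-root degree-$3$ vertex (or with a degree-$4$ vertex) are the second and third. The goal is to show that each such $T_i$ can be replaced by a branch satisfying the first property without changing $F_T(\rho)$ and without disturbing any other branch.

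The feature that makes the surgeries safe is that each is confined to a subtree $T(x)$ lying entirely inside a single $T_i$: replacing $T(x)$ by $T'(x)$ leaves $\rho$, the neighbors $v_j$, and every other branch $T_j$ ($j\neq i$) untouched, so it suffices to track the effect on the modified branch. First I would treat a property-2 branch by applying the first transformation from the proof of Lemma~\ref{lem:sub4} (remove $vw$ and $zy$, reroute $y$ through a length-$2$ path from $x$, and identify $u$ with $w$): the two degree-$3$ vertices $z$ at height $h-2$ and $v$ at height $h-1$ both drop to degree $2$, while the single vertex $x$ at height $h-3$ rises from degree $2$ to degree $3$. Since this is the equality case, $F_{T(x)}(x)=F_{T'(x)}(x)$, so the new tree is again optimal and the branch now has exactly one non-root degree-$3$ vertex, i.e. it satisfies property~1. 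For a property-3 branch I would invoke the degree-$4$ surgery from the proof of Lemma~\ref{lem:sub3} on the height-$(h-1)$ vertex and its (degree-$2$) parent; this is again an equality case, eliminates the degree-$4$ vertex, and lands the branch in property~1 (or, if one prefers to be conservative, in property~2, whereupon the previous surgery finishes the job).

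To package the iteration cleanly I would introduce the nonnegative integer potential
\[
\Phi(T)=\sum_{i}\Big(\max\{0,\ d_3(T_i)-1\}+2\,d_4(T_i)\Big),
\]
where $d_3(T_i)$ and $d_4(T_i)$ count the non-root vertices of degree $3$ and of degree $4$ in $T_i$, and verify that each surgery strictly decreases $\Phi$ while keeping the tree optimal and leaving every other branch fixed. Because $\Phi$ is a nonnegative integer, the process terminates, and at termination no branch violates property~1, which is exactly the assertion. The only place where genuine work is needed is the degree bookkeeping in the previous paragraph: that each surgery is genuinely count-preserving (already checked inside Lemmas~\ref{lem:sub3} and~\ref{lem:sub4}) and that it creates no new degree-$3$ or degree-$4$ vertex beyond the single vertex it was designed to promote, so that $\Phi$ truly drops. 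The localization to $T(x)\subseteq T_i$ makes the ``no interference between branches'' part immediate; the delicate point is confirming that the degree changes at $x$, $z$, $v$ (respectively at the degree-$4$ vertex and its parent) land in the intended category rather than spawning a fresh violation. Low-height boundary cases, where $x$ coincides with $\rho$ or with a neighbor $v_i$, I would dispatch separately, or avoid altogether by first passing to an optimal tree with all non-root degrees at most $3$ via the Observation preceding Lemma~\ref{lem:sub4}, which removes property~3 from consideration entirely.
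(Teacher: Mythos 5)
Your proposal is correct and takes essentially the same route as the paper: the paper also derives this observation by noting that the equality-case transformations in the proofs of Lemmas~\ref{lem:sub3} and~\ref{lem:sub4} preserve the number of root-containing subtrees while converting each branch of the second or third type into one with at most one non-root degree-$3$ vertex, and then iterating. Your potential-function $\Phi$ and the treatment of the $x=\rho$ boundary cases are just explicit bookkeeping for the termination argument the paper leaves implicit.
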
 

%
Observation~\ref{T_i split} establishes that each $T_i$ is a $k_i$-split for some integer $k_i$ with $0\leq k_i \leq h-1$. To minimize some notation, we state the following structural observation. 

\begin{obs}
In an optimal tree $T$, the number of root-containing subtrees in a $k_i$-split together with root $\rho$ is
\[s_h(k_i): = h + k_i^2 +k_i +1.\] 
This definition also makes sense for the $0$-split (together with $\rho$), which has $h+1$ root-containing subtrees.  
\end{obs}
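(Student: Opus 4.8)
The plan is to read off $s_h(k_i)$ directly from the multiplicative formula for root-containing subtrees established in the proof of Claim~\ref{claim:min_max_Core}: if a rooted tree has root $r$ with children $c_1,\dots,c_m$ spanning branches $B_1,\dots,B_m$, then $F(r)=\prod_{j=1}^m\bigl(1+F_{B_j}(c_j)\bigr)$, and a path on $t$ vertices rooted at one end has exactly $t$ root-containing subtrees. Since this is a statement purely about the $k_i$-split (optimality, via Observation~\ref{T_i split}, only guarantees that each $T_i$ \emph{is} such a split), the whole argument is an exact computation. I would first interpret ``the number of root-containing subtrees in a $k_i$-split together with root $\rho$'' as the number of subtrees of $T_i\cup\{\rho\}$ containing $\rho$; because $\rho$ has the single child $v_i$, the product formula makes this equal to $1+F_{T_i}(v_i)$, so everything reduces to computing $F_{T_i}(v_i)$.

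Next I would carry out that computation for a $k_i$-split with $1\leq k_i \leq h-1$. Writing $f:=k_i$ and using the notation from the definition of the $f$-split (root $v_i=w_1$, main path $w_1,\dots,w_h$, and the pendant path $u_1,\dots,u_f$ attached at the branch vertex $w_{h-f}$), the vertex $w_{h-f}$ carries exactly two pendant paths, namely $w_{h-f+1},\dots,w_h$ and $u_1,\dots,u_f$, each on $f$ vertices. Applying the product formula at $w_{h-f}$, and using that each such path contributes $f$, gives $F(w_{h-f})=(1+f)^2$. I would then climb from $w_{h-f}$ up to the root: each of $w_{h-f-1},\dots,w_1$ has a single child, so the count increases by exactly $1$ across each of the $h-f-1$ edges, yielding $F_{T_i}(v_i)=(1+f)^2+(h-f-1)=h+f^2+f$. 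Attaching $\rho$ then gives $1+F_{T_i}(v_i)=h+f^2+f+1=s_h(f)$, as claimed.

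There is no substantive obstacle here; the only care needed is in the two boundary cases, which I would check explicitly. When $f=h-1$ the branch vertex coincides with the root $w_1$, so the climbing step contributes nothing ($h-f-1=0$) and the formula still returns $(1+f)^2=h^2$, matching $h+f^2+f$. When $f=0$ the pendant path $P_2$ is empty and the $0$-split degenerates to the path on $h$ vertices, for which $F_{T_i}(v_i)=h$ and hence $1+F_{T_i}(v_i)=h+1=s_h(0)$, in agreement with $h+0^2+0+1$. I would close by noting that every step above is an exact identity, so the value $s_h(k_i)=h+k_i^2+k_i+1$ is verified on the nose for all $0\leq k_i\leq h-1$.
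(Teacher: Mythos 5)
Your proposal is correct: the product formula $F(r)=\prod_j\bigl(1+F_{B_j}(c_j)\bigr)$ applied at the branch vertex $w_{h-f}$ gives $(1+f)^2$, climbing the $h-f-1$ single-child edges adds $h-f-1$, and attaching $\rho$ yields $1+\bigl((1+f)^2+h-f-1\bigr)=h+f^2+f+1$, with both boundary cases $f=0$ and $f=h-1$ handled properly. The paper states this as an observation with no proof at all, so your computation is precisely the routine verification the authors omitted, and it is consistent with how $s_h(\cdot)$ is used later (e.g.\ $F_S(\rho)=s_h(k_i)s_h(k_j)$ as a product of per-branch factors in Lemmas~\ref{lem:sub5}--\ref{lem:sub7}).
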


\begin{lemma}\label{lem:sub5}
Among the $T_i$ subtrees in an optimal tree, at most two of them can be 0-splits.
\end{lemma}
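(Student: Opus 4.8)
The plan is to argue by contradiction using a single local swap that preserves both the order $n$ and the height bound while strictly decreasing the number of root‑containing subtrees. The key simplification is that, by Claim~\ref{claim:min_max_Core} together with the preceding observation, if each $T_i$ is a $k_i$‑split then the count factors completely over the children of $\rho$:
\[ F_T(\rho) = \prod_{i=1}^{r} \bigl(1 + F_{T_i}(v_i)\bigr) = \prod_{i=1}^{r} s_h(k_i), \qquad s_h(k) = h + k^2 + k + 1 . \]
Thus each child contributes exactly the factor $s_h(k_i)$, a $0$‑split contributes $s_h(0) = h+1$, and a $0$‑split uses exactly $h$ vertices (all leaves at height $h$). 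So to contradict optimality it suffices to replace a block of children by another block that occupies the same number of vertices, respects the height bound, and has a strictly smaller product of factors; every untouched child cancels from the comparison.

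First I would suppose for contradiction that at least three of the $T_i$, say $T_1,T_2,T_3$, are $0$‑splits. These three children use $3h$ vertices and together contribute the factor $(h+1)^3$. I would then form a competitor $T'$ by deleting these three paths from $\rho$ and attaching instead one $1$‑split and one $(h-1)$‑split, which are legitimate splits for $h\ge 2$. This swap conserves vertices, since $(h+1) + (2h-1) = 3h$, and each of the two new splits has height $h-1$, so $T'$ still has order $n$ and height at most $h$. The new pair of children contributes $s_h(1)\,s_h(h-1) = (h+3)(h^2+1)$, and a direct computation gives
\[ (h+1)^3 - (h+3)(h^2+1) = 2(h-1) > 0 \]
for $h\ge 2$. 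Since all remaining factors are identical in $T$ and $T'$, this yields $F_{T'}(\rho) < F_T(\rho)$, contradicting optimality; hence at most two of the $T_i$ can be $0$‑splits.

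I do not expect a serious obstacle here, as the argument reduces to the one‑line inequality above once the swap is set up. The only points needing care are bookkeeping: confirming that a $1$‑split and an $(h-1)$‑split exist, that both have height exactly $h-1$ so the overall height of $T'$ stays at most $h$, and that the three-to-two reduction is \emph{forced}—one cannot keep three children summing to $3h$ vertices without all three being $0$‑splits (the split parameters would have to sum to $0$), so the competitor genuinely must drop a child. One should also flag the degenerate case $h=1$, where the unique height‑$1$ tree is a star and no nontrivial split exists; the statement is understood for $h\ge 2$, precisely the range in which the displayed inequality is strict.
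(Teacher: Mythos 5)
Your proof is correct and follows essentially the same argument as the paper: replace three $0$-splits (using $3h$ vertices) by a $1$-split together with an $(h-1)$-split, and observe that $(h+1)^3 - s_h(1)\,s_h(h-1) = 2(h-1) > 0$, contradicting optimality. Your extra care in noting the factorization of $F_T(\rho)$ over the children, the vertex-count conservation, and the degenerate case $h=1$ is sound bookkeeping that the paper leaves implicit.
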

\begin{proof} 
Suppose, for contradiction, that  $T_i$, $T_j$ and $T_k$ are each 0-splits in an optimal tree. Consider $S: = T_i \cup T_j \cup T_k \cup \{ \rho \}$. 
Create $S'$ from $S$ by replacing $T_i$ with a 1-split, $T_j$ with an $(h-1)$-split and deleting $T_k$ (Figure~\ref{fig:ex_n}). 

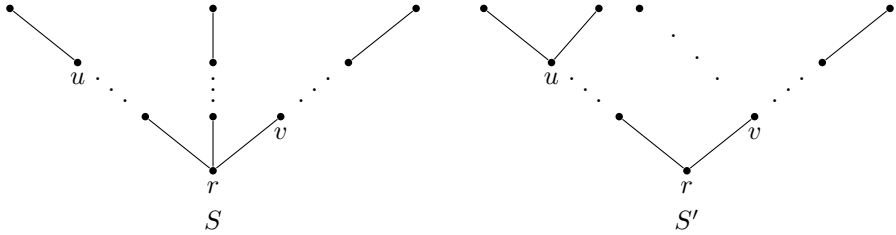
\begin{figure}[htbp]
\centering
\begin{tabular}{c}
    \begin{tikzpicture}[scale=0.9, y=.8cm]
        \node[fill=black,circle,inner sep=1pt] (t1) at (0,3) {};
        \node[fill=black,circle,inner sep=1pt] (t2) at (1,2) {};
        \node[fill=black,circle,inner sep=1pt] (t3) at (2,1) {};
        \node[fill=black,circle,inner sep=1pt] (t4) at (3 ,0) {};
        \node[fill=black,circle,inner sep=1pt] (t5) at (3,1) {};
        \node[fill=black,circle,inner sep=1pt] (t6) at (3,2) {};
        \node[fill=black,circle,inner sep=1pt] (t7) at (3, 3) {};
        \node[fill=black,circle,inner sep=1pt] (t8) at (4 ,1) {};
        \node[fill=black,circle,inner sep=1pt] (t9) at (5,2) {};
        \node[fill=black,circle,inner sep=1pt] (t10) at (6,3) {};

        \node[fill=black,circle,inner sep=1pt] (t11) at (7,3) {};
        \node[fill=black,circle,inner sep=1pt] (t12) at (8,2) {};
        \node[fill=black,circle,inner sep=1pt] (t13) at (8.7,3) {};
        \node[fill=black,circle,inner sep=1pt] (t14) at (9,1) {};
        \node[fill=black,circle,inner sep=1pt] (t15) at (10, 0) {};
        \node[fill=black,circle,inner sep=1pt] (t16) at (9.3 ,3) {};       
        \node[fill=black,circle,inner sep=1pt] (t17) at (11,1) {};
        \node[fill=black,circle,inner sep=1pt] (t18) at (12,2) {};
        \node[fill=black,circle,inner sep=1pt] (t19) at (13 ,3) {};

       \draw (t1)--(t2);
       \draw (t3)--(t4);
       \draw (t4)--(t5);
       \draw (t6)--(t7);
       \draw (t4)--(t8);
       \draw (t9)--(t10);
       \draw (t11)--(t12);
       \draw (t12)--(t13);
       \draw (t14)--(t15);
       \draw (t15)--(t17);
       \draw (t18)--(t19);

 \node at(1,1.7){$u$};
 \node at(8,1.7){$u$};
 \node at(4,0.7){$v$};
 \node at(11,0.7){$v$};

  \node at(3,-.3){$r$};
  \node at(10,-.3){$r$};

 \node at(1.3,1.7){.};  \node at(1.5,1.5){.};  \node at(1.7,1.3){.};
 \node at(3,1.7){.};  \node at(3,1.5){.};  \node at(3,1.3){.};
 \node at(4.3,1.3){.};  \node at(4.5,1.5){.};  \node at(4.7,1.7){.};
 \node at(8.3,1.7){.};  \node at(8.5,1.5){.};  \node at(8.7,1.3){.};
 \node at(11.3,1.3){.};  \node at(11.5,1.5){.};  \node at(11.7,1.7){.};
 \node at(9.8,2.5){.};  \node at(10.15,2.1){.};  \node at(10.45,1.7){.};

  \node at(3,-.9){$S$};
  \node at(10,-.9){$S'$};

 \end{tikzpicture}
 \end{tabular}
\caption{Trees $S$ and $S'$ from the proof of Lemma~\ref{lem:sub5}}\label{fig:ex_n}
\end{figure}

The difference in the number of subtrees is
\begin{align*}
F_{S}(\rho)-F_{S'}(\rho)
&= (s_h(0))^3 - s_h(1)s_h(h-1)\\
&= (h+1)^3 - (h+3)(2h+(h-1)^2)\\
& = 2(h-1)\\
&> 0. 
\end{align*}
This contradicts the optimality of $T$ because the tree obtained from $T$ by replacing $S$ with $S'$ has fewer root-containing subtrees than $T$. 
\end{proof}

\begin{lemma}
If some $T_i$ is a $0$-split, then for each $j\neq i$, $T_j$ is either a $0$-split or a $1$-split.  \label{lem:sub6}
\end{lemma}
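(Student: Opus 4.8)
The plan is to argue by contradiction with the same local-exchange strategy used in Lemma~\ref{lem:sub5}. Suppose $T$ is optimal, $T_i$ is a $0$-split, and some $T_j$ with $j\neq i$ is a $k$-split with $k\geq 2$. Note that $k\geq 2$ together with $k\leq h-1$ forces $h\geq 3$, so $1$ and $k-1$ are both legal split parameters in $\{0,\dots,h-1\}$. By Observation~\ref{T_i split} every branch at $\rho$ is a split, and the product formula from the proof of Claim~\ref{claim:min_max_Core} gives $F_T(\rho)=\prod_\ell s_h(k_\ell)$, because the number of $\rho$-containing subtrees that meet the $\ell$-th branch is exactly $s_h(k_\ell)=1+F_{T_\ell}(v_\ell)$. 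Hence it suffices to exhibit a modification of the two distinguished branches that preserves the order and keeps the height at most $h$, while strictly lowering the product of the two corresponding factors.

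The key move I would make is the one that shifts a single ``extra'' vertex from the long split to the short one: replace the $0$-split $T_i$ and the $k$-split $T_j$ by a $1$-split and a $(k-1)$-split rooted at the same two neighbors of $\rho$. Writing $S:=T_i\cup T_j\cup\{\rho\}$ and letting $S'$ be the corresponding fragment after the exchange, the order is preserved because $h+(h+k)=(h+1)+(h+k-1)$, and each of the new splits still has height $h-1$ from its root, so the global height stays at most $h$. Since every other branch is untouched, the only change to $F_T(\rho)$ is the replacement of the factor $s_h(0)s_h(k)$ by $s_h(1)s_h(k-1)$, so it is enough to compare $F_S(\rho)=s_h(0)s_h(k)$ with $F_{S'}(\rho)=s_h(1)s_h(k-1)$.

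The heart of the argument is the computation of this difference. Using $s_h(t)=h+t^2+t+1$ and setting $A:=h+k^2+1$ so that $s_h(k)=A+k$ and $s_h(k-1)=A-k$, I would verify the collapse
\[
F_S(\rho)-F_{S'}(\rho)=(h+1)(A+k)-(h+3)(A-k)=-2A+2k(h+2)=2(k-1)(h-k+1).
\]
For $2\leq k\leq h-1$ both factors $k-1$ and $h-k+1$ are strictly positive, so the difference is positive. Thus the tree obtained from $T$ by replacing $S$ with $S'$ has strictly fewer $\rho$-containing subtrees, contradicting optimality; this forces $k_j\leq 1$ for every $j\neq i$, which is precisely the claim.

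The only real obstacle I anticipate is selecting an exchange whose difference is positive across the \emph{entire} admissible range $2\leq k\leq h-1$, not merely for small $k$: the balanced-versus-unbalanced tradeoff in $\prod_\ell s_h(k_\ell)$ is genuinely delicate in general, since for fixed vertex budget the product $s_h(a)s_h(b)$ with $a+b$ fixed can be minimized either at a balanced or an extremal split depending on $h$ and $k$. The ``shift one vertex'' exchange sidesteps this entirely, because its difference factors cleanly as $2(k-1)(h-k+1)$, which is manifestly nonnegative and vanishes only at the boundary $k\in\{1,h\}$. I would also record the degenerate cases $h\leq 2$, where no $k$-split with $k\geq 2$ exists and the conclusion holds vacuously.
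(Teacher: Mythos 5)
Your proposal is correct and follows essentially the same route as the paper: the identical exchange (replace the $0$-split and the $k$-split by a $1$-split and a $(k-1)$-split, preserving order and height), and the same factored difference $s_h(0)s_h(k)-s_h(1)s_h(k-1)=2(k-1)(h-k+1)>0$ contradicting optimality. Your extra bookkeeping (the product formula for $F_T(\rho)$, the vertex-count and height checks, and the vacuous cases $h\leq 2$) only makes explicit what the paper leaves implicit.
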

\begin{proof}
Suppose instead that $T_i $ is a $0$-split and $T_j$ is a $k_j$-split where $1<k_j \leq h-1$. 
Let $S$ be the tree induced by $T_i$, $T_j$ and $r$. Construct $S'$ from $S$ by replacing $T_i$ with a 1-split and replacing $T_j$ with a $(k_j-1)$-split (Figure~\ref{fig:ex'_n}). 

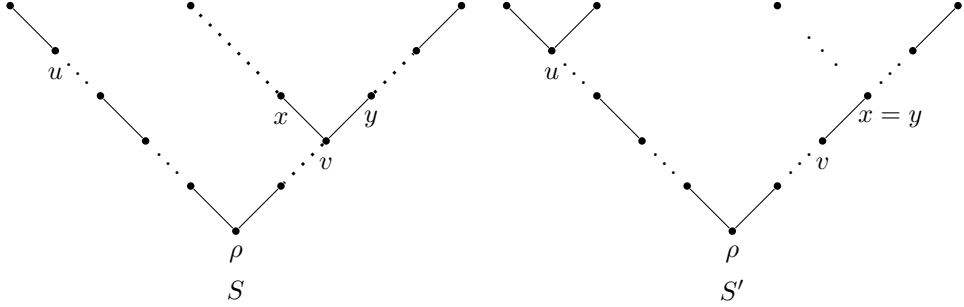
\begin{figure}[htbp]
\centering
\begin{tabular}{c}
    \begin{tikzpicture}[scale=0.6]
        \node[fill=black,circle,inner sep=1pt] (t1) at (0,5) {};
        \node[fill=black,circle,inner sep=1pt] (t2) at (1,4) {};
        \node[fill=black,circle,inner sep=1pt] (t3) at (2,3) {};
        \node[fill=black,circle,inner sep=1pt] (t4) at (3 ,2) {};
        \node[fill=black,circle,inner sep=1pt] (t5) at (4,1) {};
        \node[fill=black,circle,inner sep=1pt] (t6) at (5,0) {};
        \node[fill=black,circle,inner sep=1pt] (t7) at (6, 1) {};
        \node[fill=black,circle,inner sep=1pt] (t8) at (7,2) {};
        \node[fill=black,circle,inner sep=1pt] (t9) at (8,3) {};
        \node[fill=black,circle,inner sep=1pt] (t9a) at (6,3) {};
        \node[fill=black,circle,inner sep=1pt] (t10) at (9,4) {};
        \node[fill=black,circle,inner sep=1pt] (t11) at (10,5) {};

        \node[fill=black,circle,inner sep=1pt] (t12) at (11,5) {};
        \node[fill=black,circle,inner sep=1pt] (t13) at (12,4) {};
        \node[fill=black,circle,inner sep=1pt] (t14) at (13,3) {};
        \node[fill=black,circle,inner sep=1pt] (t15) at (14, 2) {};
        \node[fill=black,circle,inner sep=1pt] (t16) at (15 ,1) {};       
        \node[fill=black,circle,inner sep=1pt] (t17) at (16,0) {};
        \node[fill=black,circle,inner sep=1pt] (t18) at (17,1) {};
        \node[fill=black,circle,inner sep=1pt] (t19) at (18,2) {};
        \node[fill=black,circle,inner sep=1pt] (t20) at (19,3) {};
        \node[fill=black,circle,inner sep=1pt] (t21) at (20,4) {};
        \node[fill=black,circle,inner sep=1pt] (t22) at (21,5) {};

        \node[fill=black,circle,inner sep=1pt] (t23) at (4,5) {};
        \node[fill=black,circle,inner sep=1pt] (t24) at (13, 5) {};
        \node[fill=black,circle,inner sep=1pt] (t25) at (17 ,5) {};

       \draw (t1)--(t2);
       \draw (t3)--(t4);
       \draw (t5)--(t6);
       \draw (t6)--(t7);
       \draw (t8)--(t9);
       \draw (t8)--(t9a);
       \draw (t10)--(t11);

       \draw (t12)--(t13);
       \draw (t14)--(t15);
       \draw (t16)--(t17);
       \draw (t17)--(t18);
       \draw (t19)--(t20);
       \draw (t21)--(t22);
       \draw (t13)--(t24);
       \draw [loosely dotted, line width=1.1pt] (t9a)--(t23);
       \draw [loosely dotted, line width=1.1pt] (t9)--(t10);
       \draw [loosely dotted, line width=1.1pt] (t8)--(t7);

 \node at(17.7,4.3){.};  \node at(18,4){.};  \node at(18.3,3.7){.};
 \node at(1.3,3.7){.};  \node at(1.5,3.5){.};  \node at(1.7,3.3){.};
 \node at(3.3,1.7){.};  \node at(3.5,1.5){.};  \node at(3.7,1.3){.};
 \node at(12.3,3.7){.};  \node at(12.5,3.5){.};  \node at(12.7,3.3){.};
 \node at(14.3,1.7){.};  \node at(14.5,1.5){.};  \node at(14.7,1.3){.};
 \node at(17.3,1.3){.};  \node at(17.5,1.5){.};  \node at(17.7,1.7){.};
 \node at(19.3,3.3){.};  \node at(19.5,3.5){.};  \node at(19.7,3.7){.};

\node at(18,1.5){$v$};
\node at(7,1.5){$v$};
\node at(19.5,2.5){$x=y$};
\node at(8,2.5){$y$};
\node at (6, 2.5) {$x$};

\node at(1,3.5){$u$};
\node at(12,3.5){$u$};

\node at(5,-.5){$\rho$};
\node at(16,-.5){$\rho$};
\node at(5,-1.3){$S$};
\node at(16,-1.3){$S'$};

 \end{tikzpicture}
 \end{tabular}
\caption{Tree $S$ and tree $S'$, which is the result from identifying $x$ and $y$, from the proof of Lemma~\ref{lem:sub6}.}\label{fig:ex'_n}
\end{figure}

Note that $S'$ has the same height and order as $S$, and
\begin{align*}
F_{S}(\rho)-F_{S'}(\rho) & =
s_h(0)s_h(k_j) - s_h(1)s_h(k_j-1)\\
&= (h+1)\left[h+k_j^2 + k_j +1\right] - (h+3)\left[h+(k_j - 1)^2 +k_j\right] \\
&= 2\left[(k_j-1)(h - k_j) +(k_j- 1)\right] \\
& > 0 \tag{for $k_j>1$}.
\end{align*}
This contradicts the optimality of $T$ because the tree obtaining from $T$ by replacing $S$ with $S'$ has fewer root-containing subtrees than $T$. 
\end{proof}

\begin{lemma}
A rooted tree $T$ is not optimal if for any $T_i$ ($k_i$-split) and $T_j$ ($k_j$-split), we have  $k_i(1+k_j) > h+1$ for $1\leq k_i\leq k_j\leq h-1$.
\label{lem:sub7}
\end{lemma}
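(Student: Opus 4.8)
The plan is to argue by contradiction, exactly in the spirit of Lemmas~\ref{lem:sub5} and \ref{lem:sub6}: assume $T$ is optimal yet contains two branches $T_i$ and $T_j$ that are a $k_i$-split and a $k_j$-split with $1\le k_i\le k_j\le h-1$ and $k_i(1+k_j)>h+1$, and then exhibit a local replacement that strictly decreases the number of root-containing subtrees. Let $S$ be the subtree induced by $\rho$ together with $V(T_i)$ and $V(T_j)$. Because the two branches hang off $\rho$ independently, the product rule gives $F_S(\rho)=s_h(k_i)\,s_h(k_j)$, where $s_h(k)=h+k^2+k+1$. Since altering only $T_i$ and $T_j$ scales $F_T(\rho)$ by exactly $F_{S'}(\rho)/F_S(\rho)$, it suffices to build a competitor $S'$ of the same order and height with $F_{S'}(\rho)<F_S(\rho)$.

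The main move is an anti-smoothing step: replace $T_i$ by a $(k_i-1)$-split and $T_j$ by a $(k_j+1)$-split to obtain $S'$. One vertex migrates from $T_i$ to $T_j$, so the total order is preserved, and every leaf remains at height $h$, so this is a legitimate competitor. Using $F_{S'}(\rho)=s_h(k_i-1)\,s_h(k_j+1)$ together with the closed form of $s_h$, a short computation factors the difference as
\[
F_S(\rho)-F_{S'}(\rho)=2\bigl(k_j+1-k_i\bigr)\bigl(k_i(k_j+1)-(h+1)\bigr).
\]
The first factor is positive because $k_i\le k_j$, and the second is positive by the hypothesis $k_i(1+k_j)>h+1$; hence $F_{S'}(\rho)<F_S(\rho)$, contradicting optimality.

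The main obstacle is that this move is defined only when $k_j+1\le h-1$, i.e.\ when $k_j\le h-2$, since an $h$-split does not exist. The boundary case $k_j=h-1$ (where the hypothesis forces $k_i\ge 2$) must be treated separately. Here $T_j$ is the symmetric double path whose root is already the branch vertex, so it cannot be lengthened further. Instead I would replace $T_j$ by \emph{two} $0$-splits and $T_i$ by a $(k_i-1)$-split; the counts still match, since the $(h-1)$-split has $2h-1$ vertices, the two $0$-splits have $2h$ vertices, and the surplus vertex is exactly the one freed by shortening $T_i$. Now $F_{S'}(\rho)=s_h(k_i-1)\,(h+1)^2$, and using $s_h(h-1)=h^2+1$ the difference factors as
\[
F_S(\rho)-F_{S'}(\rho)=2\bigl(h-k_i\bigr)\bigl(k_i h-(h+1)\bigr).
\]
Since $k_i\le h-1$ gives $h-k_i\ge 1$, and $k_i h=k_i(1+k_j)>h+1$ is precisely the hypothesis, this too is positive.

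In either case the replacement of $S$ by $S'$ inside $T$ produces a rooted tree of the same order and height at most $h$ with strictly fewer root-containing subtrees, contradicting the assumed optimality of $T$. I expect the bulk of the work to be routine bookkeeping — checking the two closed-form factorizations — and the only genuine subtlety to be recognizing that the boundary value $k_j=h-1$ requires the alternate ``split into two $0$-splits'' move rather than the generic vertex transfer.
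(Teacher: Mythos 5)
Your proof is correct, and in the generic case it is exactly the paper's argument: the paper forms the subtree $T_2$ induced by $\rho$, $T_i$, $T_j$, replaces $T_i$ by a $(k_i-1)$-split and $T_j$ by a $(k_j+1)$-split, and obtains the same factorization
\[
F_{T_2}(\rho)-F_{T'_2}(\rho)=-2(k_i-k_j-1)(k_i+k_ik_j-h-1)=2\bigl(k_j+1-k_i\bigr)\bigl(k_i(1+k_j)-(h+1)\bigr)>0 .
\]
Where you genuinely add something is the boundary case $k_j=h-1$. The paper asserts the construction ``is well-defined because $1\leq k_i$ and $k_j\leq h-1$,'' but the $f$-split is only defined for $f\in\{0,1,\dots,h-1\}$, so the $(k_j+1)$-split exists only when $k_j\leq h-2$; the case $k_j=h-1$ is permitted by the lemma's hypotheses (and is actually needed later, e.g.\ in Corollary~\ref{one big}, where $k_1$ may equal $h-1$), so the paper's proof has a small gap there. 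Your repair is sound: replacing the $(h-1)$-split by two $0$-splits and $T_i$ by a $(k_i-1)$-split preserves order ($2h-1$ versus $2h$ vertices, with the surplus absorbed by shortening $T_i$) and height, and your factorization
\[
F_S(\rho)-F_{S'}(\rho)=s_h(k_i)\,s_h(h-1)-s_h(k_i-1)\,(h+1)^2=2\bigl(h-k_i\bigr)\bigl(k_ih-(h+1)\bigr)
\]
checks out; it is positive because $k_i\leq h-1<h$ and the hypothesis with $k_j=h-1$ reads exactly $k_ih>h+1$. So your write-up follows the paper's approach in substance but is more complete than the paper's own proof.
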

\begin{proof}
Define $T_2$ be the subtree of $T$ which consists of the root $\rho$ together with $T_i$ and $T_j$.

Construct $T'_2$ from $T_2$ by replacing $T_i$ with a $(k_i-1)$-split and replacing $T_j$ with a $(k_j+1)$-split. This construction is well-defined because $1\leq k_i$ and $k_j\leq h-1$. 

It is easy to see that $T'_2$ has the same height and order as $T_2$. We have
\[F_{T_2}(\rho)=s_h(k_i)s_h(k_j)\quad \quad \text{ and } \quad \quad F_{T'_2}(\rho)=s_h(k_i-1)s_h(k_j+1).\]
Since $k_i \leq k_j $ and $k_i(1+k_j)> h+1$, we have
$$ F_{T_2}(\rho)-F_{T'_2}(\rho)= -2(k_i-k_j-1)(k_i+k_ik_j-h-1)>0 , $$
which contradicts the optimality of $T$. 
\end{proof}

By reversing the roles of $k_i$ and $k_j$ in the previous lemma, we obtain the following corollary. 

\begin{cor}
A rooted tree $T$ is not optimal if for any $T_i$, which is a $k_i$-split, and $T_j$, which is a $k_j$-split, we have $k_j(1+k_i)<h+1$ and $k_i<k_j-1$.
\label{cor:sub8}
\end{cor}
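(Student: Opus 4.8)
The plan is to mirror the exchange argument of Lemma~\ref{lem:sub7}, but to move split-mass in the opposite direction. First I would let $T_2$ denote the subtree of $T$ consisting of the root $\rho$ together with the two branches $T_i$ (a $k_i$-split) and $T_j$ (a $k_j$-split). I would then build $T'_2$ from $T_2$ by replacing $T_i$ with a $(k_i+1)$-split and $T_j$ with a $(k_j-1)$-split. Before anything else I must check that this operation is legitimate, i.e.\ that both new split parameters lie in $\{0,1,\ldots,h-1\}$: from $k_i<k_j-1$ and $k_j\leq h-1$ one gets $k_i\leq h-3$, so $k_i+1\leq h-1$, while $k_j>k_i+1\geq 1$ forces $k_j-1\geq 0$. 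The construction clearly preserves both order and height, so $T'_2$ is again an admissible branch configuration.

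Next I would compute the change in the number of root-containing subtrees. Since the two branches contribute multiplicatively through $s_h(\cdot)$, I have $F_{T_2}(\rho)=s_h(k_i)s_h(k_j)$ and $F_{T'_2}(\rho)=s_h(k_i+1)s_h(k_j-1)$. This is precisely the quantity analyzed in Lemma~\ref{lem:sub7}, only with the two labels interchanged: invoking the polynomial identity established there with $k_j$ now playing the role of the branch whose parameter decreases and $k_i$ the role of the branch whose parameter increases gives
\[
F_{T_2}(\rho)-F_{T'_2}(\rho)=-2(k_j-k_i-1)\bigl(k_j(1+k_i)-(h+1)\bigr).
\]
No fresh calculation is required here; it is the same identity in $s_h$ already verified in the lemma, read with $k_i$ and $k_j$ swapped.

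Finally I would carry out the sign analysis under the stated hypotheses. The assumption $k_i<k_j-1$ gives $k_j-k_i-1>0$, while $k_j(1+k_i)<h+1$ gives $k_j(1+k_i)-(h+1)<0$; hence the product of the two factors is negative, and after multiplication by $-2$ we obtain $F_{T_2}(\rho)-F_{T'_2}(\rho)>0$. Substituting $T'_2$ for $T_2$ inside $T$ therefore strictly lowers the number of root-containing subtrees, contradicting the assumed optimality of $T$.

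I do not expect a genuine obstacle, as the corollary is essentially a sign flip of Lemma~\ref{lem:sub7}. The only points demanding care are tracking which of the two factors is positive and which is negative under the hypotheses, and confirming that the incremented and decremented splits remain well-defined; once the identity from the lemma is reused verbatim with the labels swapped, the two sign conditions yield the contradiction immediately.
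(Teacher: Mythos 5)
Your proof is correct and takes essentially the same approach as the paper: the paper obtains this corollary precisely by ``reversing the roles of $k_i$ and $k_j$'' in Lemma~\ref{lem:sub7}, which is exactly the exchange (increment $k_i$, decrement $k_j$) and the resulting sign analysis that you carry out. Your explicit check that the new split parameters stay in $\{0,1,\ldots,h-1\}$ is a detail the paper leaves implicit, but otherwise the arguments coincide.
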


\begin{cor}
Fix an optimal tree $T$ in which each $T_i$ is a $k_i$-split with $k_i\geq k_{i+1}$. If $k_1> \sqrt{h+\frac{5}{4}}-\frac{1}{2}$, then $k_i < \sqrt{h+\frac{5}{4}}-\frac{1}{2}$ for each $i\geq 2$.
\label{one big}
\end{cor}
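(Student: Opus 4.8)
The plan is to recognize the threshold $\sqrt{h+\tfrac54}-\tfrac12$ as the positive real root of the quadratic $k(k+1)=h+1$, and then to derive a contradiction with optimality by a single application of Lemma~\ref{lem:sub7}. Concretely, I would set $k^*:=\sqrt{h+\tfrac54}-\tfrac12$ and first record the algebraic identity that drives everything: from $k^*+\tfrac12=\sqrt{h+\tfrac54}$, squaring gives $k^{*2}+k^*+\tfrac14=h+\tfrac54$, hence
\[
k^*(k^*+1)=h+1.
\]
Thus $k^*$ is exactly the value at which the quantity $k_i(1+k_j)$ appearing in Lemma~\ref{lem:sub7} equals the critical bound $h+1$ when $k_i=k_j=k^*$. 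I would also note in passing that $k^*\ge 1$ for every integer $h\ge 1$ (equivalently $h+\tfrac54\ge\tfrac94$), which will let me satisfy the side condition $1\le k_i$ of Lemma~\ref{lem:sub7}.

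Next I would argue by contradiction. Assume $k_1>k^*$ but that some index $i\ge 2$ has $k_i\ge k^*$. Since the splits are listed in non-increasing order $k_1\ge k_2\ge\cdots\ge k_r$, we get $k_2\ge k_i\ge k^*$. Now I chain inequalities, using $k_2\ge k^*>0$ together with the strict inequality $1+k_1>1+k^*$ (from $k_1>k^*$):
\[
k_2(1+k_1)\ \ge\ k^*(1+k_1)\ >\ k^*(1+k^*)\ =\ h+1.
\]
Applying Lemma~\ref{lem:sub7} to the pair $T_2$ (a $k_2$-split) and $T_1$ (a $k_1$-split), with $1\le k_2\le k_1\le h-1$ and $k_2(1+k_1)>h+1$, certifies that $T$ is not optimal. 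This contradicts the standing assumption that $T$ is an optimal tree. Therefore no $k_i$ with $i\ge 2$ can reach $k^*$; that is, $k_i<k^*$ for every $i\ge 2$, which is precisely the claim.

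The argument is very short, so there is no substantial obstacle; the only points requiring care are the strictness bookkeeping and the side conditions of Lemma~\ref{lem:sub7}. The displayed chain stays strict overall because the second step is strict (guaranteed by $k_1>k^*$ being a strict hypothesis and $k^*>0$), even though the first step uses only $k_2\ge k^*$. For the hypotheses of Lemma~\ref{lem:sub7}, the lower bound $k_2\ge 1$ follows from $k_2\ge k^*\ge 1$, and the upper bound $k_1\le h-1$ holds because every split index lies in $\{0,1,\dots,h-1\}$ by construction of a $k$-split. I would finally remark that reversing the roles of $k_1$ and $k_2$ is not needed here, so the corollary follows directly from the ``one large'' direction of Lemma~\ref{lem:sub7}.
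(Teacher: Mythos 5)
Your proof is correct and takes essentially the same route as the paper: both argue by contradiction, reduce to the pair $(k_2,k_1)$ via the ordering, use the identity that $\sqrt{h+\frac{5}{4}}-\frac{1}{2}$ is the positive root of $k(k+1)=h+1$ to conclude $k_2(1+k_1)>h+1$, and then invoke Lemma~\ref{lem:sub7}. Your additional verification of the side conditions $1\le k_2$ and $k_1\le h-1$ is a welcome bit of care that the paper leaves implicit.
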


\begin{proof}
Let $T$ be an optimal tree, as described in the corollary, with $k_1> \sqrt{h+\frac{5}{4}}-\frac{1}{2}$. For contradiction, suppose $k_2\geq  \sqrt{h+\frac{5}{4}}-\frac{1}{2}$. Observe 
\begin{align*}
k_2(k_1+1)
& >  \left(\sqrt{h+\frac{5}{4}}-\frac{1}{2}\right) \left(\sqrt{h+\frac{5}{4}}+\frac{1}{2}\right)\\
&= h+1.
\end{align*}
However, this contradicts the statement of Lemma~\ref{lem:sub7}. Therefore the corollary holds. 
\end{proof}

\begin{cor}
Fix an optimal tree $T$ in which each $T_i$ is a $k_i$-split. For any pair $\{k_i, k_j\}$ with $k_i,k_j\leq \sqrt{h+\frac{5}{4}}-\frac{1}{2}$, we can conclude $|k_i - k_j|\leq 1$. 
\label{even distrib}
\end{cor}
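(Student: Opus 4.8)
The plan is to argue by contradiction and reduce everything to Corollary~\ref{cor:sub8}, which rules out optimality precisely when two splits are unbalanced in a quantitative sense. First I would fix the optimal tree $T$ and, without loss of generality, order the pair so that $k_i \leq k_j$, with both bounded by $\sqrt{h+\frac{5}{4}}-\frac{1}{2}$. Suppose toward a contradiction that $|k_i-k_j|\geq 2$; since the $k$'s are integers this means $k_i \leq k_j-2$, i.e. $k_i < k_j-1$. In particular $k_j \geq k_i + 2 \geq 2$, so the split $T_j$ is nontrivial and the relevant rebalancing construction behind Corollary~\ref{cor:sub8} is well-defined.

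The heart of the argument is a one-line computation converting the geometric threshold $k_j \leq \sqrt{h+\frac{5}{4}}-\frac{1}{2}$ into a usable integer inequality. Rearranging gives $k_j+\frac{1}{2} \leq \sqrt{h+\frac{5}{4}}$, and squaring yields $k_j^2+k_j+\frac{1}{4} \leq h+\frac{5}{4}$, hence $k_j^2+k_j \leq h+1$. I would then estimate the product in the hypothesis of Corollary~\ref{cor:sub8} using $k_i \leq k_j-2$: namely $k_j(1+k_i) \leq k_j(k_j-1) = k_j^2-k_j < k_j^2+k_j \leq h+1$, where the strict step uses $k_j \geq 1$. Thus both hypotheses of Corollary~\ref{cor:sub8} hold, namely $k_i < k_j-1$ and $k_j(1+k_i) < h+1$, so $T$ is not optimal. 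This contradiction forces $|k_i-k_j|\leq 1$, proving the corollary.

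I do not expect a genuine obstacle: the statement is essentially the contrapositive of Corollary~\ref{cor:sub8} packaged against the balance threshold $\sqrt{h+\frac{5}{4}}-\frac{1}{2}$. The only points requiring care are the clean passage from the real-valued threshold to the integer inequality $k_j(k_j+1)\leq h+1$, and the verification that $k_j(1+k_i)<h+1$ is \emph{strict} (Corollary~\ref{cor:sub8} demands strictness), which is exactly why I separate out $k_j \geq 1$. The degenerate case $k_j=0$ need not even be discussed, since it forces $k_i=0$ and hence $|k_i-k_j|=0$; the contradiction hypothesis $|k_i-k_j|\geq 2$ already guarantees $k_j\geq 2$.
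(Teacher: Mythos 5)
Your proof is correct and follows essentially the same route as the paper: assume $k_j \geq k_i+2$, bound $k_j(1+k_i) \leq k_j(k_j-1) < h+1$, and invoke Corollary~\ref{cor:sub8} to contradict optimality. The only (immaterial) difference is algebraic bookkeeping — you square the threshold to get $k_j^2+k_j \leq h+1$ and use $k_j^2-k_j < k_j^2+k_j$, while the paper expands $\left(\sqrt{h+\tfrac{5}{4}}-\tfrac{1}{2}\right)\left(\sqrt{h+\tfrac{5}{4}}-\tfrac{3}{2}\right) = h+2-2\sqrt{h+\tfrac{5}{4}} < h+1$; both correctly secure the needed strictness.
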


\begin{proof}
Suppose $k_i,k_j \leq \sqrt{h+\frac{5}{4}}-\frac{1}{2}$ with $k_j\geq k_i+2$. Observe
\begin{align*}
k_j(1+k_i) 
&\leq  k_j(k_j-1)\\
&\leq  \left(\sqrt{h+\frac{5}{4}}-\frac{1}{2}\right) \left(\sqrt{h+\frac{5}{4}}-\frac{3}{2}\right)\\
&= h+2 - 2\sqrt{h+\frac{5}{4}} \\
&< h+1.
\end{align*}
This contradicts Corollary~\ref{cor:sub8}, finishing the proof. 
\end{proof}

Going further in this direction, we have the following even more specific statements.

\begin{lemma}
Suppose $k_1 \geq k_2 \geq \ldots \geq k_k$. 
If $k_1> \left \lceil \sqrt{h+\frac{5}{4}}-\frac{1}{2}\right\rceil$, then for each $i>1$, 
\begin{equation}\frac{h+1}{k_1}-1 \leq k_i \leq \frac{h+1}{k_1+1}.\label{k_i bounds}\end{equation}
In particular, $k_2 = k_3 = \ldots =k_k = \left\lfloor \frac{h+1}{k_1+1} \right\rfloor$ provided $\left\lfloor \frac{h+1}{k_1+1} \right\rfloor \geq \frac{h+1}{k_1}-1$. 
\end{lemma}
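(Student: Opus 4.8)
The plan is to read both inequalities in \eqref{k_i bounds} directly off the two exchange arguments already in hand, Lemma~\ref{lem:sub7} and Corollary~\ref{cor:sub8}, after first recording the numerical meaning of the threshold on $k_1$. The key preliminary observation is that the real number $\alpha := \sqrt{h+\frac54}-\frac12$ is precisely the positive root of $x(x+1)=h+1$, so $\alpha(\alpha+1)=h+1$. Since $x(x+1)$ is increasing for $x>0$ and $k_1$ is an integer with $k_1>\lceil\alpha\rceil$, we have $k_1\ge \lceil\alpha\rceil+1\ge \alpha+1$, hence $k_1-1\ge \alpha$ and therefore $(k_1-1)k_1\ge \alpha(\alpha+1)=h+1$. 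This yields the two facts I will use repeatedly: $k_1(k_1+1)>h+1$ and $k_1^2\ge h+1+k_1>h+1$, the latter giving the strict separation $\frac{h+1}{k_1}<k_1$. I also note that no $T_i$ can be a $0$-split: a $0$-split would force $k_1\le 1$ by Lemma~\ref{lem:sub6}, whereas $\alpha\ge 1$ for $h\ge 1$ gives $\lceil\alpha\rceil\ge 1$ and hence $k_1\ge 2$. Thus every $k_i\ge 1$.

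For the upper bound, fix $i>1$ and apply Lemma~\ref{lem:sub7} to the pair $(T_i,T_1)$. The hypotheses $1\le k_i\le k_1\le h-1$ hold, since $k_i\ge 1$ from above, $k_i\le k_1$ by the ordering, and $k_1\le h-1$ because every split has parameter at most $h-1$. Optimality then forbids $k_i(1+k_1)>h+1$, so $k_i(1+k_1)\le h+1$, i.e. $k_i\le \frac{h+1}{k_1+1}$. For the lower bound, suppose toward a contradiction that $k_i<\frac{h+1}{k_1}-1$ for some $i>1$. Multiplying by $k_1>0$ gives $k_1(1+k_i)<h+1$, and combining $k_i<\frac{h+1}{k_1}-1$ with $\frac{h+1}{k_1}<k_1$ gives $k_i<k_1-1$. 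These are exactly the two hypotheses of Corollary~\ref{cor:sub8} for the pair $(T_i,T_1)$, so $T$ is not optimal, a contradiction. Hence $k_i\ge \frac{h+1}{k_1}-1$, which completes \eqref{k_i bounds}.

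For the ``in particular'' clause, write $m:=\lfloor\frac{h+1}{k_1+1}\rfloor$. From \eqref{k_i bounds} and integrality every $k_i$ with $i>1$ satisfies $k_i\le m$, so it remains to rule out $k_i\le m-1$. The cleanest way is to observe that the proviso $m\ge \frac{h+1}{k_1}-1$ places $m$ itself inside the closed interval $\big[\frac{h+1}{k_1}-1,\ \frac{h+1}{k_1+1}\big]$, being at most the right endpoint by definition of the floor and at least the left endpoint by the proviso, while \eqref{k_i bounds} places $k_i$ in the same interval. This interval has length $1-\frac{h+1}{k_1(k_1+1)}$, which is strictly less than $1$ because $k_1(k_1+1)>h+1$; and a closed interval of length less than $1$ contains at most one integer. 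Therefore $k_i=m$ for every $i>1$.

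I expect the only genuinely delicate point to be the bookkeeping in the first paragraph: one must upgrade the strict, ceiling-valued hypothesis $k_1>\lceil\alpha\rceil$ all the way to the quadratic inequality $k_1^2>h+1$, and not merely to $k_1(k_1+1)>h+1$, since it is exactly the strict separation $\frac{h+1}{k_1}<k_1$ that supplies the second hypothesis $k_i<k_1-1$ of Corollary~\ref{cor:sub8} in the lower-bound argument. Once that inequality is secured, both bounds are immediate invocations of the two exchange results, and the uniqueness of the value $m$ follows from the interval having length below $1$.
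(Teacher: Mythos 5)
Your proof is correct and follows essentially the same route as the paper's: the upper bound in \eqref{k_i bounds} comes from Lemma~\ref{lem:sub7} applied to the pair $(T_i,T_1)$ and the lower bound from Corollary~\ref{cor:sub8}, with your algebraic observation $\bigl(\sqrt{h+\tfrac54}-\tfrac12\bigr)\bigl(\sqrt{h+\tfrac54}+\tfrac12\bigr)=h+1$ playing exactly the role that Corollary~\ref{one big} plays in the paper (namely, securing $k_i<k_1-1$ so that Corollary~\ref{cor:sub8} applies). You are in fact somewhat more careful than the paper: you verify $k_i\geq 1$ via Lemma~\ref{lem:sub6} before invoking Lemma~\ref{lem:sub7}, and you supply an explicit interval-length argument for the ``in particular'' clause, which the paper asserts without proof.
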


\begin{proof}
Since $k_1 > \sqrt{h+\frac{5}{4}}-\frac{1}{2}$, Corollary~\ref{one big} implies $k_2< \sqrt{h+\frac{5}{4}}-\frac{1}{2}$. 
Since $T$ is optimal, Lemma~\ref{lem:sub7} yields $k_i(1+k_1) \leq k_2(1+k_1)\leq h+1$.  Thus 
\[k_i \leq \frac{h+1}{k_1+1}.\]

Because $k_1 > \left \lceil \sqrt{h+\frac{5}{4}}-\frac{1}{2}\right\rceil$, then necessarily $k_i\leq k_2< k_1-1$. Corollary~\ref{cor:sub8} gives 
\[k_1(1+k_i) \geq h+1.\] 
This is equivalent to
\[k_i \geq \frac{h+1}{k_1}-1.\]
\end{proof}

We have established the three cases detailed in Theorem~\ref{thm:optimal_summary}, aside from the further information when $n \geq 5h^2$. In the rest of this section, we examine the number of $T_i$ subtrees which can be a $k_i$-splits in an optimal tree. This will shed light on the values of $k_i$ in an optimal tree with a large number of vertices, compared to its height, and also lends insight into the degree of the root vertex.  First we prove a technical lemma.

\begin{lemma}
For fixed $h,n\in \mathbb{Z}$, let $T$ be an optimal tree with $n$ vertices, height $h$, and where $T_i$ is a $k_i$-split. Fix $t\in \mathbb{R}$, $t\geq 2$ which satisfies the inequality $h^{1/(t+1)} >\ln (6h)$.
For $x\in \left[ h^{1/(t+1)}, h^{1/t} \right]$ with $n\geq (h+x)(h+x-1) + 1$, then \[|\{i: k_i =x\}|<h+x-1.\]
\label{lem:technical_ki}
\end{lemma}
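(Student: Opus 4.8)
The plan is to argue by contradiction: assume an optimal $T$ has at least $h+x-1$ subtrees $T_i$ equal to the $x$-split, and build a competitor tree of the same order $n$ and height at most $h$ with strictly fewer root-containing subtrees. First I would record two reductions. We may assume $x$ is a positive integer, since otherwise $\{i:k_i=x\}$ is empty and there is nothing to prove; moreover $x\ge h^{1/(t+1)}>\ln(6h)$ forces $h$ (and hence $x$) to be large, so in particular $x-1\ge 1$ is a legitimate split parameter and no boundary case with $0$- or $1$-splits arises. I would also note that the hypothesis $n\ge (h+x)(h+x-1)+1$ is automatically consistent with the contradiction assumption, since $h+x-1$ copies of the $x$-split already account for $(h+x-1)(h+x)$ non-root vertices.

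Recalling that $F_T(\rho)=\prod_i s_h(k_i)$ with $s_h(k)=h+k^2+k+1$, the key move is purely local to the children of $\rho$: take exactly $h+x-1$ of the $x$-split children and replace them by $h+x$ children that are $(x-1)$-splits. This replacement is vertex-exact, because
\[ (h+x-1)(h+x)=(h+x)(h+x-1), \]
and it preserves the height bound, since each $(x-1)$-split has height $h-1$ and so contributes height at most $h$ from $\rho$. The threshold $h+x-1$ is exactly what makes the counts match integrally: since $\gcd(h+x,h+x-1)=1$, the smallest number of $x$-splits repackageable into whole $(x-1)$-splits is $h+x-1$. After the swap the product $\prod_i s_h(k_i)$ changes only in the affected factors, from $s_h(x)^{\,h+x-1}$ to $s_h(x-1)^{\,h+x}$.

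It then remains to show the swap strictly decreases the product, i.e.
\[ s_h(x-1)^{\,h+x} < s_h(x)^{\,h+x-1}, \]
which, writing $a=s_h(x-1)=h+x^2-x+1$ and $b=s_h(x)=h+x^2+x+1$ (so $b-a=2x$) and taking logarithms, becomes
\[ (h+x-1)\ln\!\Big(1+\tfrac{2x}{a}\Big) > \ln a. \]
To bound the left side I would apply $\ln(1+u)\ge u-u^2/2$ with $u=2x/a$, together with $x\le h^{1/t}\le h^{1/2}$ (this is where $t\ge 2$ is used), which gives $x^2\le h$, hence $a\le 2h$ and $x/a\le 1/\sqrt h$. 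These reduce the left side to at least $x\,(1-1/\sqrt h)$, while the right side is at most $\ln(2h)$. The inequality then follows from $x\ge h^{1/(t+1)}>\ln(6h)$, the gap between $\ln(6h)$ and $\ln(2h)$ absorbing the $(1-1/\sqrt h)$ loss once $h$ is large; and $h^{1/(t+1)}>\ln(6h)$ guarantees $h$ is large. A strict decrease of $\prod_i s_h(k_i)=F_T(\rho)$ contradicts the optimality of $T$.

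I expect the main obstacle to be this final inequality $(h+x-1)\ln(1+2x/a)>\ln a$, verified uniformly across the window $x\in[h^{1/(t+1)},h^{1/t}]$: both sides are of order $\ln h$, so the estimate only closes because $x$ exceeds $\ln(6h)$ while $x^2$ stays below $h$. The constants (the factor $6$, and the requirement $t\ge 2$) are tuned precisely so that the elementary bound $\ln(1+u)\ge u-u^2/2$ retains enough slack to win.
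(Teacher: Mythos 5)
Your proposal is correct and is essentially the paper's own proof: the same exchange of $h+x-1$ copies of the $x$-split for $h+x$ copies of the $(x-1)$-split (exact in both order and height, by the coprime count $(h+x-1)(h+x)=(h+x)(h+x-1)$), followed by the same contradiction with optimality of $T$. The only difference is cosmetic, in how the final inequality $s_h(x-1)^{h+x}<s_h(x)^{h+x-1}$ is verified --- you take logarithms and use $\ln(1+u)\ge u-u^2/2$, while the paper lower-bounds the ratio $\frac{s_h(x)^{h+x-1}}{s_h(x-1)^{h+x}}$ directly via $\left(1+\frac{1}{y}\right)^y>\frac{e}{2}$; both estimates close for the same reason, namely $x>\ln(6h)$ while $x^2\le h$ (this is where $t\ge 2$ enters).
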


\begin{proof}
Let $T$ be a tree with root degree $r$ and each $T_i$ is a $k_i$-split. 
Suppose for contradiction that $k_1 = k_2 = \ldots = k_{h+x-1} = x$ (where the $k_i$ values are not necessarily in non-increasing order). 

 Let $H$ be the subtree induced by $T_1, \ldots, T_{h+x-1}$ and the root $\rho$. Here, each $T_i$ is an $x$-split. Thus the number of root-containing subtrees in $H$ is 
\[F_{H}(\rho) = (h+x^2+x+1)^{h+x-1}.\]
  Let $T'_i$ be an $(x-1)$-split.
Define a new tree $T'$ by replacing $T_i$ with $T'_i$ for each $i\in [h+x-1]$ and increasing the degree of the root by one so that the new branch $T'_0$ is also an $(x-1)$-split. 
Let $H'$ be the subtree induced by $T'_0, T'_1, \ldots, T'_{h+x-1}$ and the root of $T'$.
The number of root-containing subtrees in $H'$ is 
\[F_{H'}(\rho) = (h+x^2-x+1)^{h+x}.\]
In order to compare the number of root-containing subtrees of $T$ and $T'$, it suffices to compare the number of root-containing subtrees of $H$ and $H'$.

In order to compare these, consider the ratio: 
\begin{align*}
\frac{F_T(\rho)}{F_{T'}(\rho)} 
 = \frac{F_{H}(\rho)}{F_{H'}(\rho)}
 =& \frac{(h+x^2+x+1)^{h+x-1}}{(h+x^2-x+1)^{h+x}}\\
=& \frac{1}{h+x^2-x+1} \left(1+\frac{2x}{h+x^2-x+1}\right)^{h+x}\\
 \geq & \frac{1}{h+h^{2/t}+h^{1/t}+1} \left(1+\frac{2h^{1/(t+1)}}{h+h^{2/t}-h^{1/(t+1)}+1}\right)^{h+h^{1/(t+1)}} 
 \end{align*}
 since $x\in [h^{1/(t+1)}, h^{1/t}]$. The last expression can be further rewritten as 
\begin{align*} 
& \frac{1}{h+h^{2/t}+h^{1/t}+1} \left(1+\frac{2}{h^{t/(t+1)}+h^{(t+2)/(t^2+t)}-1+h^{-1/(t+1)}}\right)^{h+h^{1/(t+1)}}\\
 \geq & \frac{1}{3h} \left(1+\frac{2}{h^{t/(t+1)}+h^{(t+2)/(t^2+t)}}\right)^{h} \tag*{since $h^{1/(t+1)}>1$}\\
  \geq & \frac{1}{3h} \left(1+\frac{2}{h^{t/(t+1)}+h^{t/(t+1)}}\right)^{h^{t/(t+1)}h^{1/(t+1)}} \tag*{since $\frac{t+2}{t^2+t}< \frac{t}{t+1}$}\\
  = &   \frac{1}{3h} \left(1+\frac{1}{h^{t/(t+1)}}\right)^{h^{t/(t+1)}h^{1/(t+1)}}\\
\geq & \frac{1}{3h}\cdot \frac{1}{2} e^{h^{1/(t+1)}}   \tag*{since $\left( 1+\frac{1}{y} \right)^y > \frac{1}{2}e$ for $y\geq 1$}\\
> & 1 \tag*{since $h^{1/(t+1)} >\ln (6h)$.} 
\end{align*}
 Thus,  $T$ is not an optimal tree because $T'$ also has $n$ vertices and height $h$ but has fewer subtrees which contain its root. 
\end{proof}

As a consequence we obtain the following corollary:
\begin{cor}
Fix $h\geq 550$ and $n\in \mathbb{Z}$ with $n\geq 5h^2$. Let $T$ be an optimal tree with subtrees $T_i$ which are $k_i$-splits. Using the terminology from Theorem~\ref{thm:optimal_summary}, if $T$ has an ``even distribution'' of $k_i$ values, then $k_i\leq \ln(6h)$ for all $i\in \{1,2,\ldots, r\}$. If $T$ has ``one large'' $k_i$ values, then $k_i\leq \ln(6h)$ for all $i\in \{2,\ldots, r\}$.
\label{cor:sub9}
\end{cor}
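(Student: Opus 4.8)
The plan is to derive Corollary~\ref{cor:sub9} from the multiplicity cap of Lemma~\ref{lem:technical_ki} by a vertex count that forces one split-value to occur too often. First I would record that a $k_i$-split $T_i$ has exactly $h+k_i$ vertices, so $n=1+\sum_{i=1}^{r}(h+k_i)=1+rh+\sum_i k_i$. Any split-value $k$ shared by at least two branches must satisfy $k(1+k)\le h+1$ by Lemma~\ref{lem:sub7} (taking $k_i=k_j=k$), hence $k\le\sqrt{h+\frac{5}{4}}-\frac{1}{2}<\sqrt h$; in particular every value we will feed into Lemma~\ref{lem:technical_ki} lies below $\sqrt h$. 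Bounding $\sum_i k_i\le r\sqrt h+h$ and using $n\ge 5h^2$ with $h\ge 550$, the count yields $r>4h$, so there are far more than $h$ branches to distribute among the few available split-values.

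The heart of the matter is to show that Lemma~\ref{lem:technical_ki} applies to \emph{every} candidate value $x\in(\ln(6h),\sqrt h\,]$; this is exactly where $h\ge 550$ enters. I would first verify $h^{1/3}>\ln(6h)$ for all $h\ge 550$ (one checks the inequality at $h=550$, and $h^{1/3}-\ln(6h)$ is increasing for $h>27$). Then for $x\in[h^{1/3},\sqrt h\,]$ the choice $t=2$ gives $x\in[h^{1/3},h^{1/2}]$ with $h^{1/(t+1)}=h^{1/3}>\ln(6h)$, while for $x\in(\ln(6h),h^{1/3})$ the choice $t=\frac{\ln h}{\ln x}-1\ge 2$ makes $x=h^{1/(t+1)}$ the left endpoint of $[h^{1/(t+1)},h^{1/t}]$, again with $h^{1/(t+1)}=x>\ln(6h)$. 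The last hypothesis is automatic, since $x\le\sqrt h$ forces $(h+x)(h+x-1)<(h+\sqrt h)^2<5h^2\le n$. Thus for every such $x$ the lemma supplies the cap $|\{i:k_i=x\}|<h+x-1\le h+\sqrt h<2h$.

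With the cap available I would finish by contradiction in each regime. In the ``one large'' case all branches except $T_1$ carry the single value $x=\lfloor(h+1)/(k_1+1)\rfloor<\sqrt h$, so $|\{i\ge 2:k_i=x\}|=r-1>4h$; were $x>\ln(6h)$ this would exceed the cap $h+x-1$, a contradiction, giving $k_i\le\ln(6h)$ for all $i\ge 2$. In the ``even distribution'' case the values occupy a two-element set $\{M-1,M\}$, and the value held by at least $r/2>2h$ branches is shared by at least two branches, hence lies below $\sqrt h$ and is forced to be $\le\ln(6h)$ by the cap. Moreover, if both $M$ and $M-1$ exceeded $\ln(6h)$ then both multiplicities would be below $h+\sqrt h$, forcing $r<2h+2\sqrt h<4h$ and contradicting $r>4h$; so the value shared by almost all branches, and with it every $k_i$, is $\le\ln(6h)$ up to the single boundary branch.

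The main obstacle is the calibration in the second paragraph: the admissible window $[h^{1/(t+1)},h^{1/t}]$ of Lemma~\ref{lem:technical_ki} is narrow, so one must let $t$ range over $[2,\,\ln h/\ln\ln(6h)-1)$ and check that the resulting windows sweep out all of $(\ln(6h),\sqrt h\,]$ without ever violating $h^{1/(t+1)}>\ln(6h)$. The borderline $t=2$ is precisely what demands $h^{1/3}>\ln(6h)$ and hence the threshold $h\ge 550$. A secondary nuisance is the even-distribution bookkeeping at the extremal value $M\approx\ln(6h)$, where the cap pins down the bulk of the branches but the precise statement must separately account for the lone branch that may sit at the boundary, together with the rounding implicit in the bound $\ln(6h)$.
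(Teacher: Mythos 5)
Your proposal follows the paper's proof in all essentials. The heart of both arguments is the same calibration of Lemma~\ref{lem:technical_ki}: take $t=2$ to cover $x\in[h^{1/3},\sqrt{h}\,]$ and $t=\frac{\ln h}{\ln x}-1$ to cover $x\in(\ln(6h),h^{1/3})$, with $h\ge 550$ entering exactly to guarantee $h^{1/3}>\ln(6h)$, and with the hypothesis $n\ge (h+x)(h+x-1)+1$ following automatically from $x\le\sqrt{h}$ and $n\ge 5h^2$. Your finishing step is only a reparametrization of the paper's: you convert $n\ge 5h^2$ into ``more than $4h$ branches'' and contradict the multiplicity cap $|\{i:k_i=x\}|<h+x-1<2h$, while the paper multiplies the capped multiplicities by the branch sizes $h+x$ and contradicts $n\ge 5h^2$ directly; these are the same estimate, and your ``one large'' case is complete and matches the paper's.

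The one substantive point is the even-distribution boundary case, which you flag but do not close: when the two values satisfy $M-1\le\ln(6h)<M$, your argument (the majority value is $\le\ln(6h)$; not both values can exceed $\ln(6h)$) does not yield the corollary's claim that \emph{every} $k_i\le\ln(6h)$. Two remarks. First, your phrase ``the lone branch that may sit at the boundary'' understates the residue: for $\ln(6h)<M\le\sqrt{h}$, Lemma~\ref{lem:technical_ki} still permits up to $h+M-2$ branches of value $M$, so the unhandled configurations are not confined to a single branch. Second, this is not a defect of your write-up relative to the paper: the paper's proof has the identical hole, since its displayed bound $n\le (h+x)(h+x-1)+(h+x-1)(h+x-2)+1$ caps the number of $(x-1)$-splits by appealing to Lemma~\ref{lem:technical_ki}, which is legitimate only when $x-1>\ln(6h)$; in the boundary case nothing in either argument bounds the number of $(x-1)$-splits, the vertex count gives no contradiction, and some new idea (e.g.\ a swap argument that works with fewer than $h+x-1$ offending splits) would be needed. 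So you have reproduced the paper's proof faithfully, including its one soft spot --- and you were more candid about that soft spot than the paper is.
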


\begin{proof}
Fix $n,h$ with $n\geq 5h^2$. Let $T$ be an optimal tree with height $h$ and $n$ vertices. Toward a contradiction, suppose there is an integer $x \in (\ln(6h), \sqrt{h+ \frac{5}{4}} - \frac{1}{2}]$ such that some $T_i$ is an $x$-split. Consider two cases. 

If $x \in \left(\ln(6h), {t^{1/3}}\right],$ then setting $t:=\frac{\ln h}{\ln x}-1$, we have $h^{1/(t+1)}=x>\ln(6h)$ and $t\geq 2$. Since $n\geq 5h^2$ and $x<h^{1/3}$, then clearly $n \geq (h+x)(h+x-1)+1$. Thus by Lemma~\ref{lem:technical_ki}, there are at most $h+x-1$ subtrees $T_i$ of $T$ which can be $x$-splits. 

If instead $x\in \left[h^{1/3},\sqrt{h+ \frac{5}{4}} - \frac{1}{2}\right]\subseteq \left[h^{1/3}, h^{1/2}\right]$ (where $h \geq 550$), then set $t:=2$. In this case, again $h^{1/(t+1)}> \ln (6h)$, $t\geq 2$, and $x\in [h^{1/(t+1)}, h^{1/t}]$ with $n \geq (h+x)(h+x-1)+1$. Therefore, by Lemma~\ref{lem:technical_ki}, there are at most $h+x-1$ subtrees $T_i$ of $T$ which can be $x$-splits. 

Now if $T$ has an ``even distribution'' of $k_i$ values, then $k_i \in \{x-1,x\}$ for some $x \leq \sqrt{h}$ for all $i\in \{1,2,\ldots, r\}$. Based on the conclusions here, the number of vertices in $T$ can be bounded as follows, which yields a contradiction:
\begin{align*}
 n &\leq (h+x)(h+x-1) + (h+x-1)(h+x-2) + 1\\
 &\leq (h+\sqrt{h})(h+\sqrt{h}-1)(h+\sqrt{h}-1)(h+\sqrt{h}-2) + 1 \\
 &< 5h^2.\end{align*}

On the other hand, if $T$ has ``one large'' $k_i$ value, then $k_1\leq h-1$ and there is an integer $x< \sqrt{h}$ such that $k_i=x$ for $i\in \{2,3,\ldots, r\}$. Therefore, the number of vertices in $T$ is bounded as follows, which produces a contradiction: 
$$n \leq (h+k_1) + (h+x)(h+x-1) + 1 \leq 2h + (h+\sqrt{h})(h+\sqrt{h}-1) + 1 <5h^2.$$
\end{proof}
Thus we have established Theorem~\ref{thm:optimal_summary}. 



\section{Concluding remarks}

In this study, we considered distances between three fundamental concepts of middle parts of the tree: the center, centroid, and subtree core. The maximum distances between each pair of these is first examined for general trees. The extremal structures that achieve these maximum distances contained a vertex of large degree and a long path, motivating us to study the same question for trees with degree restrictions and for trees with bounded diameter. The latter leads us to an interesting and difficult problem of minimizing number of root-containing subtrees among trees with given order and height. While we do not yet have a complete characterization of such optimal trees, we have established many of their structural properties to guide our continued study of this topic. 

\section{Acknowledgements}

Heather Smith was supported in part by the NSF DMS, contracts 1300547 and 1344199, as well as a Dean's Dissertation Fellowship awarded by the College of Arts and Sciences of the University of South Carolina. L\'aszl\'o Sz\'ekely was supported in part by the NSF DMS, contracts 1300547 and 1600811. Hua Wang was supported in part by the Simons Foundation, grant number 245307.


\end{document}